\newtheorem{theorem}{Theorem}[section] %(If you want theorem numbered
\theoremstyle{definition}
\newtheorem{defn}{Definition}[section]
\newtheorem{lemma}[defn]{Lemma}%%[section]
\newtheorem{proposition}[defn]{Proposition}%[section]%               with section number.  Same
\newtheorem{cor}[defn]{Corollary}
\newtheorem{notation}[defn]{Notation}%[section]
\theoremstyle{remark}
\newtheorem*{remark}{Remark}
\newtheorem*{remarks}{Remarks}
\newcommand{\field}[1]{\mathbb{#1}}
\newcommand{\Q}{\field{Q}}
\newcommand{\Z}{\field{Z}}
\newcommand{\C}{\field{C}}
\newcommand{\D}{\Delta}
\newcommand{\g}{\gamma}
\newcommand{\G}{\Gamma}
\newcommand{\ra}{\rightarrow}
\numberwithin{equation}{section}
\date{}
\def\vp{\varphi}
\def\Ker{\text{\rm Ker}}
\def\PSL{\text{\rm PSL}}
\def\SL{\text{\rm SL}}
\def\GL{\text{\rm GL}}
\def\Mod{\text {Mod}}
\def\Aut{\text{Aut}}
\def\Z{\mathbb  Z}
\def\Ga{\Gamma}
\def\bbr{\mathbb{F}}
\def\bbf{\mathbb{R}}
\def\bbz{\mathbb{Z}}
\def\bbz{\mathbb{Q}}
\theoremstyle{plain}
\newtheorem{thm}{Theorem}[section]
\newtheorem*{thm*}{Theorem}
\newtheorem{corr}[thm]{Corollary}
\newtheorem{prop}[thm]{Proposition}
\newtheorem*{prop*}{Proposition}
\newtheorem*{prop**}{\ }
\def\beq{\begin{equation}}
  \def\ee{\end{equation}}
\theoremstyle{definition} %fettes label, aber nich kursiv
\newtheorem{definition}[thm]{Definition}
\newtheorem*{definition*}{Definition}
\newtheorem{question}[thm]{Question}
\newtheorem*{thm1*}{Theorem A1}
\newtheorem*{thm2*}{Theorem A2}
\newtheorem*{conjecture*}{Conjecture}
\newtheorem*{claim*}{Claim}
\newtheorem*{remark*}{Remark}
\def\bbz{\mathbb{Z}}
\def\bbq{\mathbb{Q}}
\def\bbf{\mathbb{F}}
\def\bbr{\mathbb{R}}
\def\bba{\mathbb{A}}
\def\bbc{\mathbb{C}}
\def\be{\begin{equation}}
\def\ee{\end{equation}}
\theoremstyle{remark}  %kursives label. text roman
\begin{document}

\title[Congruence Topology]{The Congruence Topology, \\
Grothendieck Duality and Thin groups}
\author[A. Lubotzky]{Alexander Lubotzky}
\address{Institute of Mathematics\\
Hebrew University\\
Jerusalem 9190401, Israel\\
alex.lubotzky@mail.huji.ac.il}
\author[T.N. Venkataramana]{T.N. Venkataramana}
\address{Tata Institute of Fundamental Research\\
Homi Bhabha Road\\
Colaba, Mumbai 400005, India\\
venky@math.tifr.res.in}
\maketitle

%\centerline{Draft}

%Shani - this line controls the spacing between the lines in the document!
\baselineskip 16pt

\begin{abstract}

This paper answers a question raised by Grothendieck in 1970 on the ``Grothendieck closure" of an integral linear group and proves a conjecture of the first author made in 1980. This is done by a detailed study of the congruence topology of arithmetic groups, obtaining along the way, an arithmetic analogue of a classical result of Chevalley for complex algebraic groups. As an application we also deduce a group theoretic characterization of thin subgroups of arithmetic groups.
\end{abstract}

\section*{0. Introduction}
%\section{Introduction}

If  $\varphi: G_1 \to  G_2$ is  a polynomial  map between  two complex
varieties, then  in general  the image of  a Zariski closed  subset of
$G_1$ is not  necessarily closed in $G_2$.  But here is  a classical result:

\begin{thm*}[Chevalley] \label{Chevalleythm} %% \label{complexchevalley}
If  $\varphi$ is a  polynomial homomorphism
between two  complex algebraic groups  then $\varphi(H)$ is  closed in
$G_2$ for every closed subgroup $H$ of $G_1$.
\end{thm*}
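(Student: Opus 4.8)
The plan is to deduce the theorem from Chevalley's constructibility theorem for images of morphisms, together with the soft fact that a constructible \emph{subgroup} of an algebraic group is automatically Zariski closed. Since $H$ is a closed subgroup of $G_1$ it is itself an algebraic variety, and $\varphi|_H \colon H \to G_2$ is a morphism of varieties; hence $\varphi(H)$ is a constructible subset of $G_2$. Because $\varphi$ is a homomorphism, $\varphi(H)$ is also a subgroup. So everything reduces to the following claim: \emph{a constructible subgroup $S$ of a complex algebraic group is Zariski closed.}

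To prove the claim I would argue as follows. Let $K = \overline{S}$ be the Zariski closure of $S$; the closure of a subgroup is again a subgroup, so $K$ is a closed subgroup and we may view $S$ as a dense constructible subset of $K$. A standard property of constructible sets is that $S$ then contains a subset $U$ that is open and dense in $K$. Fix any $g \in K$. Since inversion $x \mapsto x^{-1}$ and left translation $x \mapsto gx$ are isomorphisms of the variety $K$, the set $gU^{-1}$ is again open and dense in $K$, and hence meets the dense set $U$: there are $u_1, u_2 \in U$ with $u_1 = g u_2^{-1}$, so $g = u_1 u_2 \in S \cdot S = S$. As $g \in K$ was arbitrary, $K \subseteq S$, so $S = K$ is closed. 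This gives the claim and hence the theorem.

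I expect the only genuine input to be Chevalley's constructibility theorem itself — that the image of a morphism of varieties is constructible — which is a real theorem of algebraic geometry (proved by Noetherian induction with the theorem on the dimension of the fibres, or via generic flatness); everything afterwards is elementary, the translation trick $U \cap gU^{-1} \neq \emptyset$ being the heart of the matter. One can even bypass the full constructibility statement: passing to identity components reduces to a dominant morphism $H^{\circ} \to \overline{\varphi(H^{\circ})}$ of irreducible varieties, whose image contains a nonempty open set (a softer fact), and then the translation trick gives $\varphi(H^{\circ}) = \overline{\varphi(H^{\circ})}$, after which $\varphi(H)$ is a finite union of cosets of this closed subgroup and hence closed. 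Nothing uses that the ground field is $\mathbb{C}$ rather than an arbitrary algebraically closed field, and the hypothesis that $H$ be Zariski closed is essential precisely because it is what makes $\varphi|_H$ a morphism, so that the constructibility theorem applies at all.
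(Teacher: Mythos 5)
The paper states this result without proof, treating it as classical background (it is Chevalley's theorem, proved in any standard text on linear algebraic groups, e.g.\ Borel, Humphreys, or Springer), so there is no in-paper argument to compare against. Your proposal is correct and is in fact the standard proof: Chevalley's constructibility theorem shows $\varphi(H)$ is constructible, and since a dense constructible subset $S$ of its closure $K=\overline{S}$ contains an open dense subset $U$ of $K$, the translation trick --- $gU^{-1}$ is open dense, so $gU^{-1}\cap U\neq\emptyset$ and $g\in U\cdot U\subseteq S$ --- forces $S=K$, i.e.\ every constructible subgroup is closed. The alternative route you sketch, passing to identity components and using only that a dominant morphism of irreducible varieties has image containing a nonempty open set, is also standard and slightly lighter; both are fine, and, as you note, nothing here uses $\mathbb{C}$ beyond algebraic closedness.
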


There is an arithmetic analogue of this issue:
Let $G$ be a $\bbq$-algebraic group, let $\bba_f = \Pi^*_{p\, prime} \bbq_p$ be the ring of finite \'adeles over $\bbq$.  The topology of $G(\bba_f)$ induces the congruence topology on $G(\bbq)$.  If $K$ is compact open subgroup of $G(\bba_f)$ then $\Ga = K \cap G(\bbq)$ is called a congruence subgroup of $G(\bbq)$.   This defines the congruence topology   on $G(\bbq)$ and on  all its subgroups.
A subgroup of $G(\bbq)$ which is closed in this topology is called congruence closed.
A subgroup $\Delta$  of $G$ commensurable to $\Ga$ is called an arithmetic group.

Now, if $\varphi:G_1 \to G_2$ is a $\bbq$-morphism between two $\bbq$-groups, which is a surjective homomorphism (as $\bbc$-algebraic groups) then the image of an arithmetic subgroup $\Delta$ of $G_1$ is an arithmetic subgroup of $G_2$ (\cite[Theorem 4.1 p.~174]{Pl-Ra}), but the image of a congruence subgroup is not necessarily a congruence subgroup.  It  is  well known  that
$\SL_n(\bbz)$ has  congruence subgroups whose images  under the adjoint
map  $\SL_n(\bbz) \to  \PSL_n(\bbz) \hookrightarrow  \Aut (M_n(\bbz))$
are   not    congruence   subgroups   (see    \cite{Ser}   and   Proposition \ref{congruence image}     below   for   an    exposition   and
explanation).  So, the direct analogue of Chevalley theorem does not hold.   Still,  in this  case,  if  $\Gamma$  is a  congruence
subgroup    of    $\SL_n(\bbz)$,    then    $\varphi(\Ga)$    is    a    normal subgroup of
$\overline{\varphi(\Ga)}$, the  (congruence) closure of $\varphi(\Ga)$
in $\PSL_n(\bbz)$,  and the quotient  is a finite abelian  group.  Our
first technical  result says that the  general case is  similar. It is
especially important  for us that  when $G_2$ is simply  connected, the
image of a congruence subgroup of $G_1$ is a congruence subgroup in
$G_2$ (see  Proposition \ref{arithmeticchevalley} (ii) below).

Before stating the result, we give the following definition and set some notations for the rest of the paper:

  Let $G$ be a linear algebraic group over $\bbc$, $G^0$ - its connected component, and $R = R(G)$ - its solvable radical, i.e. the largest connected normal solvable subgroup of $G$.  We say that $G$ is \emph{essentially simply connected} if $G_{ss}:= G^0/R$ is simply connected.

Given a subgroup $\Ga$ of $GL_n$, we will  throughout the paper denote by $\Ga^0$ the intersection of $\Ga$ with $G^0$, where $G^0$ is the connected component of $G$ - the Zariski closure of $\Ga$.   Therefore, $\Ga^0$ is always a finite index normal subgroup of $\Ga$.

The notion ``essentially simply connected"  will play an important role in this paper due to the following proposition, which can be considered as the arithmetic analogue of Chevalley's result above:

\begin{prop}\label{arithmeticchevalley}%%\begin{enumerate}[(i)]
 \begin{enumerate}[(i)]
\item  If  $\varphi:  G_1  \to  G_2$ is  a  surjective  (over  $\bbc$)
algebraic homomorphism between two $\bbq$-defined algebraic groups,  then for every  congruence
closed  subgroup $\Ga$  of $G_1
(\bbq)$,  the image  $\vp(\Ga^0)$ is  normal in  its  congruence closure
$\overline{\vp(\Ga^0)}$ and $\overline{\vp(\Ga^0)}/\vp  (\Ga^0)$ is a finite
abelian group.
\item  If  $G_2$  is  essentially  simply  connected, and $\Gamma$ a congruence subgroup of $G_1$  then
$\overline{\vp(\Ga)}  = \vp (\Ga)$,  i.e., the  image of  a congruence
subgroup is congruence closed.
\end{enumerate}
\end{prop}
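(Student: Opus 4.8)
\emph{Plan.} I would prove both parts by a dévissage that reduces everything to the case in which $G_2$ is unipotent, a torus, or semisimple, and then put all the weight on the semisimple case; the hypothesis in (ii) is precisely what forces the semisimple group in question to be simply connected, which is what kills the obstruction. First reduce to connected groups: since $\varphi$ is surjective $\varphi(G_1^0)=G_2^0$, the congruence topology on $G_2^0(\bbq)$ is induced from that on $G_2(\bbq)$, $\Ga^0$ is congruence closed in $G_1^0(\bbq)$ (it is the intersection of a congruence‑closed set with the open‑and‑closed subgroup $G_1^0(\bbq)$), and a congruence subgroup of $G_1$ meets $G_1^0(\bbq)$ in a congruence subgroup; moreover a subgroup of $G_2(\bbq)$ caught between a congruence subgroup and $G_2^0(\bbq)$ with finite index over the former is open, hence closed. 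So it suffices to treat $\varphi|_{G_1^0}$, and we assume $G_1,G_2$ connected. Then $\varphi(\Ga^0)$ is arithmetic in $G_2$ (by the theorem quoted in the Introduction), and its congruence closure, which lies inside the arithmetic group $G_2(\bbz)$ (for a fixed model) and contains $\varphi(\Ga^0)$, is commensurable with $\varphi(\Ga^0)$; so the quotient in (i) is automatically finite and (i) is equivalent to $[\,\overline{\varphi(\Ga^0)},\overline{\varphi(\Ga^0)}\,]\subseteq\varphi(\Ga^0)$.

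\emph{Dévissage.} Using the canonical $\bbq$‑subgroups $R_u(G_2)\subset G_2$ and, inside $G_2/R_u(G_2)$, the radical torus $S$, one has the chains $G_2\supset R_u(G_2)\supset 1$ and $G_2/R_u(G_2)\supset S\supset 1$ with successive quotients semisimple, toral, and unipotent. A routine dévissage — using the $\bbq$‑Levi decomposition and the near‑direct‑product structure of reductive groups — reduces (i) and (ii) to the three cases $G_2$ unipotent, $G_2$ a torus, $G_2$ semisimple. For $G_2$ unipotent the exponential map makes everything transparent: $\varphi$ factors through a unipotent quotient of $G_1$ and the image of a congruence subgroup is again one. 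For $G_2$ a torus, $\varphi$ factors through the maximal torus quotient of $G_1$ and one checks directly that the image of a congruence subgroup is congruence closed; since tori are abelian, (i) is then immediate and (ii) follows. The semisimple case is the substance.

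\emph{The semisimple case.} Let $G_2$ be semisimple and $M=\ker\varphi$; by Cartier $M$ is smooth. The surjection $G_1\to G_1/M^0$ has connected kernel, so for almost all $p$ one has $H^1(\bbz_p,M^0)=0$ by Lang's theorem, hence $\varphi$ is surjective on $\bbz_p$‑points there; consequently the image of a congruence subgroup of $G_1$ in $(G_1/M^0)(\bbq)$ contains a congruence subgroup, hence is congruence closed (indeed a congruence subgroup). Replacing $G_1$ by $G_1/M^0$ we may assume $M$ finite; as $G_1$ is connected, $M$ is central, so $\varphi$ is a central isogeny of connected semisimple groups. If $G_2$ is essentially simply connected — which in the semisimple case means $G_2$ is simply connected — then a central isogeny onto $G_2$ is an isomorphism, and (ii) follows. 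For general $G_2$ we are reduced to: for a central isogeny $\varphi\colon G_1\to G_2$ with kernel a finite central $\bbq$‑group $\mu$ and $\Ga$ a congruence subgroup of $G_1$, show $\varphi(\Ga)$ is normal in its congruence closure with finite abelian quotient. Here $\varphi$ is proper, so $\overline{\varphi(\Ga)}^{\,\bba_f}=\varphi\bigl(\overline{\Ga}^{\,\bba_f}\bigr)$, and one analyses the quotient exactly as in the classical example $\SL_n\to\PSL_n$: the connecting homomorphism $G_2(\bbq)\to H^1(\bbq,\mu)$ and the congruence kernel of the simply connected group $G_1$ together account for it, and both contributions are abelian; this gives (i), and then (ii) once more as the special case in which the isogeny is trivial.

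\emph{Main obstacle.} The genuinely hard point is this last assertion — that for a central isogeny the obstruction $\overline{\varphi(\Ga)}/\varphi(\Ga)$ is abelian. Since $\mu$ is a finite abelian $\bbq$‑group, the part measured by $H^1(\bbq,\mu)$ is manifestly abelian; the remaining part is controlled by the congruence kernel of $G_1$, and the fact that it enters only through its central, abelian part rests on the deep results surrounding the congruence subgroup problem and the metaplectic kernel (Bass–Milnor–Serre, Serre, Prasad–Rapinchuk, Raghunathan). Secondarily, one must take care in the dévissage, since the congruence topology does not split along short exact sequences of algebraic groups, and semisimple groups with $\bbq$‑anisotropic factors require the appropriate form of strong approximation; but I expect these to be routine by comparison with controlling the congruence kernel.
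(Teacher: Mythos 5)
Your overall skeleton — pass to $G_1^0$ and $\Gamma^0$, Levi dévissage into solvable and semisimple pieces, reduce the semisimple case to a central isogeny, and observe that essential simple connectedness of $G_2$ kills the isogeny for part~(ii) — is indeed the shape of the paper's argument. But the place you identify as the ``main obstacle'' is exactly where the proposal goes astray, and a couple of your reductions have gaps worth naming.

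\textbf{The abelianness of $\overline{\varphi(\Gamma)}/\varphi(\Gamma)$ does not rest on the congruence kernel of $G_1$.} You write that the obstruction is ``controlled by the congruence kernel of the simply connected group $G_1$'' and that its abelianness ``rests on the deep results surrounding the congruence subgroup problem and the metaplectic kernel.'' This is a misattribution. In the paper's Proposition~2.1 the relevant kernel is $K_f$, the kernel of $G_1(\bba_f)\to G_2(\bba_f)$ (equivalently, of the induced map on congruence completions). Since the algebraic kernel $K$ of the isogeny is central in $G_1$, the group $K_f$ is automatically central in $\overline{G_1}=G_1(\bba_f)$; the paper then identifies $\overline{\varphi(\Gamma)}/\varphi(\Gamma)$ with a subquotient of $K_f/K$, which is abelian for this trivial reason. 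No information about $C(G_1)=\ker(\widehat G_1\to\overline{G_1})$ is used, nor is it needed. This is essential to the scope of the result: Proposition~0.1(i) applies to $\varphi\colon\SL_2\to\PSL_2$, where the congruence kernel of $\SL_2$ is a free profinite group of infinite rank, far outside the reach of Bass--Milnor--Serre or the metaplectic computations. The only global input required is strong approximation for $G_1$ (to identify $\overline{G_1}$ with $G_1(\bba_f)$), not the CSP.

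\textbf{The Lang's-theorem step for $M^0$ is too hasty.} Vanishing of $H^1(\bbz_p,M^0)$ at almost all $p$ gives local surjectivity of $G_1(\bbz_p)\to(G_1/M^0)(\bbz_p)$, but the image of a congruence subgroup $\Gamma$ is $\psi(G_1(\bbq)\cap U)$, which lives inside $\psi(G_1(\bbq))$; the latter can have infinite index in $(G_1/M^0)(\bbq)$ because $H^1(\bbq,M^0)$ need not vanish when $M^0$ is semisimple. So you do not immediately get that $\psi(\Gamma)$ contains a congruence subgroup of $G_1/M^0$. The paper avoids this by first replacing $G_1$ with an essentially simply connected cover and then using the structure of surjections between simply connected semisimple $\bbq$-groups (Lemma~1.3(iv)): such a surjection is, up to isomorphism, projection onto a subproduct of the $\bbq$-simple factors, and on those the statement is obvious. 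Relatedly, for $G_2$ unipotent or a torus the conclusion is not an exercise in the exponential map; it is Chevalley's congruence subgroup property for solvable $\bbq$-groups (the paper cites Raghunathan/Chahal).

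\textbf{Congruence closed versus congruence subgroup.} Part~(i) is stated for an arbitrary congruence closed subgroup $\Gamma$, and after replacing $G_1$ by the Zariski closure of $\Gamma$, the subgroup $\Gamma^0$ is Zariski dense and congruence closed but need not be arithmetic (so one cannot invoke Platonov--Rapinchuk Theorem~4.1 to say $\varphi(\Gamma^0)$ is arithmetic, as your dévissage does). The paper handles this with Proposition~2.3 (built on Nori--Weisfeiler and \cite{Ve}): a Zariski dense congruence closed subgroup meets the derived group $[G^0,G^0]$ in a genuine congruence subgroup, and the argument is then run on the derived group. Your plan would need a comparable step to cover the full statement of~(i).
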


This  analogue of  Chevalley's theorem,  and a  result  of \cite{Nori},
\cite{Weis} enable us to prove:

\begin{prop}\label{congruenceimage}  If $\Ga_1 \le \GL_n(\bbz)$ is a congruence closed subgroup (i.e. closed in the congruence topology) with Zariski closure $G$, then there exists a congruence subgroup $\Gamma$ of $G$, such that $[\Gamma, \Gamma] \le \Ga_1^0 \le \Ga$.
If $G$ is essentially simply connected then the image of $\Ga _1$ in $G/R(G)$ is actually a congruence subgroup.
\end{prop}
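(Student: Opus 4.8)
The plan is to deduce this from Proposition~\ref{arithmeticchevalley} together with the strong approximation theorem of Nori and Weisfeiler \cite{Nori,Weis} for Zariski-dense (``thin'') subgroups of arithmetic groups. First reduce to the connected case: replace $\Ga_1$ by $\Ga_1^0=\Ga_1\cap G^0$, which is Zariski-dense in $G^0$, of finite index in $\Ga_1$, and --- since $G^0(\bba_f)$ is open and closed in $G(\bba_f)$ --- again congruence closed. A congruence subgroup of $G^0$ is a congruence subgroup of $G$, and if $\Ga\subseteq G^0$ then $[\Ga,\Ga]\subseteq G^0$, so for the first assertion it suffices to produce a congruence subgroup $\Ga$ of $G^0$ with $[\Ga,\Ga]\le\Ga_1^0$.

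For this, choose a finitely generated subgroup $\Delta\le\Ga_1^0$ still Zariski-dense in $G^0$ (the Zariski topology is Noetherian) and apply Nori--Weisfeiler to $\Delta$: there is a finite set $S$ of primes such that the congruence closure $\overline\Delta$ of $\Delta$ in $G^0(\bba_f)$ contains, for $p\notin S$, the subgroup of $G^0(\bbz_p)$ generated by its unipotent elements, and an open subgroup of the corresponding group at the primes of $S$. Thus $\overline\Delta$ contains a compact open subgroup $\mathcal K$ of the closed normal ``unipotent-plus-semisimple'' part of $G^0(\bba_f)$ --- the part to which Nori--Weisfeiler applies, and in which every commutator of $G^0(\bba_f)$ lies. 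Since $\Delta\le\Ga_1^0$ and $\Ga_1^0$ is congruence closed, $\mathcal K\subseteq\overline{\Ga_1^0}$ while $\overline{\Ga_1^0}\cap G^0(\bbq)=\Ga_1^0$. Now take a congruence subgroup $\Ga$ of $G^0$ of sufficiently high level: $[\Ga,\Ga]$ lies in the unipotent-plus-semisimple part of $G^0(\bba_f)$, and deepening $\Ga$ at the finitely many primes of $S$ forces the congruence closure of $[\Ga,\Ga]$ into $\mathcal K$. Intersecting with $G^0(\bbq)$ gives $[\Ga,\Ga]\le\mathcal K\cap G^0(\bbq)\subseteq\overline{\Ga_1^0}\cap G^0(\bbq)=\Ga_1^0$, which is the first assertion.

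Now suppose $G$ is essentially simply connected, so that $G_{ss}=G^0/R(G)$ is simply connected and $G/R(G)$ is again essentially simply connected; let $\vp\colon G\to G/R(G)$ be the quotient and take $\Ga\le G^0$ as above. By Proposition~\ref{arithmeticchevalley}(ii), $\vp(\Ga)$ is a congruence subgroup of $G_{ss}$, and $[\vp(\Ga),\vp(\Ga)]=\vp([\Ga,\Ga])\le\vp(\Ga_1^0)\le\vp(\Ga_1)$. Since $[\vp(\Ga),\vp(\Ga)]$ is Zariski-dense in the simply connected group $G_{ss}$, strong approximation shows that its closure in $G_{ss}(\bba_f)$ is a compact open subgroup; as this is contained in the closure $\overline{\vp(\Ga_1)}$, the latter is open. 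On the other hand $\vp$ maps the arithmetic group $\GL_n(\bbz)\cap G(\bbq)\supseteq\Ga_1$ into an arithmetic subgroup of $G_{ss}$, so $\vp(\Ga_1)$, hence $\overline{\vp(\Ga_1)}$, is relatively compact; thus $\overline{\vp(\Ga_1)}$ is compact open and $\overline{\vp(\Ga_1)}\cap G_{ss}(\bbq)$ is a congruence subgroup of $G_{ss}$. It remains to check that $\vp(\Ga_1)=\overline{\vp(\Ga_1)}\cap G_{ss}(\bbq)$, i.e.\ that $\vp(\Ga_1)$ is congruence closed. Proposition~\ref{arithmeticchevalley}(i) gives this up to a finite abelian quotient, and the remaining defect is eliminated by lifting: an element of $G_{ss}(\bbq)$ in $\overline{\vp(\Ga_1)}$ lifts to a point of $\overline{\Ga_1}\subseteq G(\bba_f)$, and since $G_{ss}$ is simply connected (so the arithmetic of $\pi_1(G_{ss})$ responsible for the $\SL_n\to\PSL_n$ discrepancy does not intervene) and $\Ga_1$ is already congruence closed, one can adjust the lift along $R(G)$ to land at a rational point of $\overline{\Ga_1}=\Ga_1$. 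Hence $\vp(\Ga_1)$ is a congruence subgroup of $G/R(G)$.

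The main obstacle is this last point: controlling the abelian discrepancy coming from the solvable radical $R(G)$ and from the possible non-simple-connectedness of $[G^0,G^0]$, so that $\vp(\Ga_1)$ turns out genuinely congruence closed rather than merely of finite abelian index in a congruence subgroup --- exactly where the hypothesis ``essentially simply connected'' enters. Keeping the integral models and the finite exceptional sets of primes attached to $G$, to $[G^0,G^0]$ and to $G_{ss}$ mutually compatible is the technical heart of the argument.
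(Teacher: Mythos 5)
Your overall strategy --- strong approximation (Nori--Weisfeiler) for a finitely generated Zariski-dense subgroup of $\Ga_1^0$, then a deep congruence subgroup whose commutator lands inside $\Ga_1^0$ --- is in the right spirit, and it corresponds to what the paper does, except that the paper packages the strong-approximation input once and for all as Proposition~\ref{pr2.3}: $\Ga_1$ is congruence closed if and only if $\Ga_1\cap Der$ (with $Der=[G^0,G^0]$) is a congruence subgroup of $Der$. Two points, however, are genuinely missing or wrong.

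First, the statement asks for a congruence subgroup $\Gamma$ with the \emph{sandwich} $[\Gamma,\Gamma]\le\Ga_1^0\le\Gamma$; you only produce a (deep) congruence subgroup $\Gamma_0$ with $[\Gamma_0,\Gamma_0]\le\Ga_1^0$, and your $\Gamma_0$ has no reason to contain $\Ga_1^0$. This is not merely cosmetic: the containment $\Ga_1^0\le\Gamma$ is what makes $\Ga_1^0$ normal in $\Gamma$ with abelian quotient, which is the content parallel to Proposition~\ref{arithmeticchevalley}(i). The fix is routine once Proposition~\ref{pr2.3} is in hand: choose a \emph{principal} (hence normal) congruence subgroup $\Gamma_0$ of $G^0$ small enough that $\Gamma_0\cap Der\subseteq\Ga_1^0\cap Der$, and set $\Gamma:=\Ga_1^0\cdot\Gamma_0$; then $\Gamma$ is a congruence subgroup containing $\Ga_1^0$, and every commutator in $\Gamma$ lies in $\Gamma_0\cap Der\subseteq\Ga_1^0$ or in $[\Ga_1^0,\Ga_1^0]\subseteq\Ga_1^0$.

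Second, and more seriously, the last step of your argument for the essentially simply connected case is a gap. You correctly get that $\overline{\vp(\Ga_1)}$ is compact and open in $G_{ss}(\bba_f)$; the problem is concluding that $\vp(\Ga_1)=\overline{\vp(\Ga_1)}\cap G_{ss}(\bbq)$. For that you need $\vp(\Ga_1)$ to be \emph{open in the congruence topology} on $G_{ss}(\bbq)$ (an open subgroup of a topological group is closed). The subgroup you exhibit inside $\vp(\Ga_1)$ is $[\vp(\Ga),\vp(\Ga)]$, but the commutator subgroup of a congruence subgroup of a simply connected group is \emph{not in general a congruence subgroup} --- this is precisely the congruence subgroup problem (think of $\SL_2$). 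Your appeal to strong approximation shows that the \emph{adelic} closure of $[\vp(\Ga),\vp(\Ga)]$ is open; it does not show that $[\vp(\Ga),\vp(\Ga)]$ itself is congruence-open in $G_{ss}(\bbq)$, which is what is needed. The subsequent ``adjust the lift along $R(G)$'' step is asserted, not proved, and it is exactly the point where the argument would have to do real work. The clean route, and the one the paper's machinery suggests, is: $\Ga_1\cap Der$ is a congruence subgroup of $Der$ by Proposition~\ref{pr2.3}; $Der$ is itself essentially simply connected (its semisimple quotient is a finite cover of the simply connected $G_{ss}$, hence equal to it); so by Lemma~\ref{surjectivemorphisms}(iv), $\vp(\Ga_1\cap Der)$ is a genuine congruence subgroup of $G_{ss}$ contained in $\vp(\Ga_1)$. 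Hence $\vp(\Ga_1)$ is congruence-open, therefore congruence-closed, therefore equal to the congruence subgroup $\overline{\vp(\Ga_1)}\cap G_{ss}(\bbq)$. No lifting argument is needed.
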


%%\end{thm}

We apply Proposition \ref{arithmeticchevalley} (ii)  in two directions:

\begin{enumerate}[(A)]

\item Grothendieck-Tannaka duality for discrete groups, and

\item  A  group  theoretic   characterization  of  thin  subgroups  of
arithmetic groups.
\end{enumerate}
\medskip
\subsection*{Grothendieck  closure}  In \cite{Gro},
Grothendieck was interested in the following question:

\begin{question} \label{isocompletion} Assume  $\varphi: \Ga_1   \to  \Ga_2$   is  a
homomorphism  between  two   finitely  generated  residually finite groups  inducing  an
isomorphism  $\hat\vp:\hat\Gamma_1 \to  \hat\Ga_2$ between  their
profinite completions.  Is $\vp$ already an isomorphism?

\end{question}

 To tackle Question \ref{isocompletion}, he introduced the following notion.  Given a finitely generated
group $\Ga$ and a commutative  ring $A$ with identity, let $Cl_A(\Ga)$
be the  group of all automorphisms  of the forgetful  functor from the
category  $\Mod_A(\Ga)$  of all  finitely  generated $A$-modules  with
$\Ga$  action  to  $\Mod_A(\{   1  \})$,  preserving  tensor  product.
Grothendieck's strategy  was the following: he showed  that, under the
conditions of Question \ref{isocompletion}, $\vp$ induces an isomorphism from $\Mod_A(\Ga_2)$ to
$\Mod_A(\Ga_1)$,   and   hence  also   between   $Cl_A  (\Ga_1)$   and
$Cl_A(\Ga_2)$.  He then asked:
\begin{question} \label{closureisomorphism} Is the  natural  map $\Ga  \hookrightarrow
Cl_{\bbz} (\Ga)$  an isomorphism  for a finitely  generated residually
finite group?
\end{question}

An affirmative  answer to Question \ref{closureisomorphism}  would imply an affirmative  answer to
Question \ref{isocompletion}.   Grothendieck then showed  that arithmetic  groups with  the
(strict)     congruence    subgroup     property    do  indeed    satisfy
$Cl_{\bbz}(\Ga)\simeq \Ga$.

Question 0.4 basically asks whether  $\Ga$ can be recovered from its
category of representations.  In  \cite{Lub}, the first author phrased
this  question in  the  framework  of Tannaka  duality,  which asks  a
similar question for compact Lie groups.  He also gave a more concrete
description of $Cl_\bbz(\Ga)$:
\begin{equation}\label{profinitegrothendieck} Cl_\bbz (\Ga) = \{ g \in \hat \Ga | \hat\rho
(g)   (V)   =  V,\quad \forall  \quad (\rho, V)   \in   \Mod_\bbz
(\Ga)\}.\end{equation}

Here $\hat\rho$  is the continuous  extension $\hat\rho: \hat  \Ga \to
\Aut  (\hat V)$  of the  original representation  $\rho: \Ga  \to \Aut
(V)$.

However, it is also  shown in \cite{Lub}, that  the answer to Question \ref{closureisomorphism}
 is  negative. The  counterexamples provided  there
 are  the arithmetic  groups for  which the  weak  congruence subgroup
property holds but not the  strict one, i.e.  the congruence kernel is
finite  but non-trivial.  It  was conjectured  in \cite[Conj A,
p. 184]{Lub},   that for an arithmetic group $\Ga$, $Cl_\bbz (\Ga) = \Ga$ if and
only if  $\Ga $  has the (strict)  congruence subgroup  property.  The
conjecture was left open even for $\Ga = \SL_2 (\bbz)$.

In  the  almost  40  years  since  \cite{Lub}  was  written  various
counterexamples were given to question \ref{isocompletion}  (\cite{Pl-Ta1}, \cite{Ba-Lu}, \cite{Br-Gr}, \cite{Py})
which  also give    counterexamples to  question \ref{closureisomorphism},  but it  was not  even
settled whether  $Cl_\bbz(F) =  F$ for finitely  generated non-abelian
free groups $F$.

We can  now answer this and,  in fact, prove  the following surprising
result, which gives an essentially  complete answer to Question \ref{closureisomorphism}.

\begin{thm}\label{maintheorem}  Let  $\Ga$ be a finitely  generated subgroup of
$\GL_n(\bbz)$.   Then  $\Ga$  satisfies Grothendieck-Tannaka  duality,
i.e. $Cl_\bbz(\Ga)  = \Ga$ if  and only if  $\Ga $ has  the congruence
subgroup property i.e., for some (and consequently for every) faithful
representation $\G \ra \GL_m(\Z)$ such that the Zariski closure $G$ of $\G$
is essentially simply connected, every finite index subgroup of $\Ga $
is closed in  the congruence topology of $\GL_n(\bbz)$.  In such  a case, the
image of the group $\G$ in the semi-simple (simply connected) quotient
$G/R$ is a congruence arithmetic group.
\end{thm}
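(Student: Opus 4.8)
The plan is to prove the two implications of Theorem~\ref{maintheorem} separately, in both cases reducing to the concrete description \eqref{profinitegrothendieck} of $Cl_\bbz(\Ga)$ as the stabilizer inside $\hat\Ga$ of all the lattices $V$ appearing in integral representations of $\Ga$.

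\textbf{The ``if'' direction (congruence subgroup property $\Rightarrow$ $Cl_\bbz(\Ga)=\Ga$).} This is essentially Grothendieck's original argument, but one must be careful about the meaning of CSP for a general finitely generated subgroup $\Ga\le\GL_n(\bbz)$. First I would fix a faithful representation $\Ga\hookrightarrow\GL_m(\bbz)$ with Zariski closure $G$ essentially simply connected; the strict CSP hypothesis says that, along this embedding, the congruence topology on $\Ga$ coincides with the profinite topology, i.e.\ the congruence kernel is trivial, so $\hat\Ga$ is the closure of $\Ga$ in $G(\bba_f)$. Now take any $(\rho,V)\in\Mod_\bbz(\Ga)$ and an element $g\in Cl_\bbz(\Ga)\le\hat\Ga\subset G(\bba_f)$. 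The point is that $\hat\rho$ extends to an algebraic (adelic) representation of $G(\bba_f)$ on $V\otimes\bba_f$ because $\rho$ is a rational representation of the group $G$ up to finite index issues; combined with $\hat\rho(g)(V)=V$ for \emph{all} $V$, and the fact that $G$ has no proper closed subgroup containing $\Ga$ and stabilizing all integral lattices (here essential simple connectedness and Proposition~\ref{arithmeticchevalley}(ii) guarantee that passing to $G/R$ does not lose congruence information), one deduces $g\in\Ga$. The last sentence of the theorem — that the image of $\Ga$ in $G/R$ is a congruence arithmetic group — follows directly from Proposition~\ref{arithmeticchevalley}(ii) applied to $\varphi:G\to G/R$ together with the CSP hypothesis, since a congruence-closed subgroup of an arithmetic group with CSP that surjects onto $G/R$ up to finite index is commensurable with the arithmetic group.

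\textbf{The ``only if'' direction (failure of CSP $\Rightarrow$ $Cl_\bbz(\Ga)\supsetneq\Ga$).} This is the genuinely new half. Suppose $\Ga$ does \emph{not} have CSP with respect to some (equivalently, any) faithful essentially-simply-connected embedding; I want to produce an element of $Cl_\bbz(\Ga)\setminus\Ga$. The strategy is: pass to $\Ga^0$ and then project to the semisimple part $G_{ss}=G^0/R$. By Proposition~\ref{arithmeticchevalley}(ii) the image of a congruence subgroup of $\Ga$ in $G_{ss}(\bbq)$ is congruence closed, and by Proposition~\ref{congruenceimage} the image of $\Ga^0$ itself contains a congruence subgroup of $G_{ss}$ — so $\Ga$ is, up to finite index and bounded error, an arithmetic subgroup $\Delta$ of the simply connected semisimple group $G_{ss}$. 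The failure of CSP for $\Ga$ forces the congruence kernel $C(G_{ss})$ to be nontrivial (using that the radical and the component group contribute nothing to the congruence kernel, since solvable and finite groups have trivial congruence kernel in the relevant sense — this needs a small lemma). Now the key representation-theoretic input: every finite-dimensional $\bbz$-representation of $\Delta$ extends to (a projective/genuine) representation of $G_{ss}$, hence its continuous extension to $\hat\Delta$ factors through the \emph{congruence} completion $\overline{G_{ss}(\bbz)}$ — i.e.\ the profinite representation is insensitive to the congruence kernel. Therefore any $g$ in the (nontrivial) congruence kernel $C(G_{ss})\subset\hat\Delta$ acts trivially on every $V$, so it stabilizes every lattice and lies in $Cl_\bbz(\Ga)$, but it is not in $\Ga$. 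The one subtlety is transporting this back from $\Delta\le G_{ss}$ to the original $\Ga\le\GL_n(\bbz)$ through the extension $1\to(\Ga\cap R)\to\Ga^0\to\Delta'\to 1$; one checks that an element of the congruence kernel of the semisimple quotient lifts to an element of $\hat\Ga$ that still acts trivially on all integral representations, using that representations of $\Ga$ are built from representations of the semisimple quotient twisted by (congruence-detectable) solvable data.

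\textbf{Main obstacle.} The crux is the claim, in the ``only if'' direction, that the profinite-completion action $\hat\rho:\hat\Ga\to\Aut(\hat V)$ on every integral representation factors through the \emph{congruence} completion, so that the whole congruence kernel is swallowed into $Cl_\bbz(\Ga)$. For semisimple simply connected groups this rests on superrigidity/the structure of representations of lattices (every finite-dimensional representation is, up to finite index, algebraic), but one must handle (a) the non-algebraic representations that a priori could exist for rank-one or non-arithmetic-looking $\Ga$, which is precisely where the hypothesis ``$\Ga$ is a finitely generated subgroup of $\GL_n(\bbz)$ with essentially simply connected Zariski closure'' and Propositions~\ref{arithmeticchevalley}--\ref{congruenceimage} are used to force $\Ga$ to be essentially arithmetic, and (b) the reduction from $G$ to $G_{ss}$, ensuring the radical and $\pi_0$ contribute no new elements to $Cl_\bbz$ beyond $\Ga$ itself — this is where Proposition~\ref{arithmeticchevalley}(i) (finite abelian cokernel) and the triviality of congruence kernels for solvable groups enter. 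I expect step (a), controlling \emph{all} finite-dimensional integral representations of $\Ga$ and not merely the algebraic ones, to be the technical heart of the argument.
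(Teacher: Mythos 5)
Your ``if'' direction is roughly in the right spirit but overcomplicated: the paper's actual argument does not need to extend $\hat\rho$ to an adelic representation of $G(\bba_f)$, nor does it use Proposition~\ref{arithmeticchevalley}(ii) for this implication. Once you fix a faithful simply connected embedding for which CSP holds, CSP says precisely that $\hat\rho:\hat\Ga\to\GL_m(\hat\bbz)$ is injective. Then for any $g\in Cl_\bbz(\Ga)$, by definition $\hat\rho(g)$ preserves the lattice, so $\hat\rho(g)\in\GL_m(\bbz)\cap\hat\rho(\hat\Ga)$, which is the congruence closure of $\rho(\Ga)$; by CSP this closure equals $\rho(\Ga)$; injectivity of $\hat\rho$ then forces $g\in\Ga$. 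Nothing more.

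The ``only if'' direction is where the proposal has a genuine gap, and you have in fact located it yourself. Your plan is to exhibit an explicit element of $Cl_\bbz(\Ga)\setminus\Ga$ by taking $g$ in the (nontrivial) congruence kernel and arguing that $g$ acts trivially on \emph{every} integral lattice because every $\bbz$-representation of $\Ga$ factors through the congruence completion. That factorization claim is a superrigidity statement, and it is simply false in the cases of interest: for $\Ga=\SL_2(\bbz)$ or a nonabelian free group there are plenty of integral representations (e.g.\ via Tits embeddings into higher-rank groups) whose profinite extension does \emph{not} factor through the congruence completion of the original embedding, and elements of the congruence kernel certainly do not stabilize the corresponding lattices. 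You flag this as ``the technical heart'' but offer no resolution, and indeed with this strategy none exists. The paper avoids the issue entirely by arguing \emph{indirectly}: it first establishes Proposition~\ref{surjective} (building on Lemmas~\ref{simplyconnectedsaturate}, \ref{surjectiveclosure} and Propositions~\ref{arithmeticchevalley}, \ref{pr2.3}) that for any simply connected $\rho$ the map $Cl_\bbz(\Ga)\to B_\rho(\Ga)$ onto the congruence closure of $\rho(\Ga)$ is surjective. Then, assuming $Cl_\bbz(\Ga)=\Ga$, one invokes the hereditary property that $Cl_\bbz(\Ga')=\Ga'$ for every finite index $\Ga'\le\Ga$ (this is \cite[Prop.~4.4]{Lub}, and is elementary from Equation~\eqref{profinitegrothendieck}); applying surjectivity to $\Ga'$ with the fixed faithful simply connected $\rho$ gives $\rho(\Ga')=\rho(Cl_\bbz(\Ga'))=B_\rho(\Ga')$, i.e.\ $\rho(\Ga')$ is congruence closed for every finite index $\Ga'$, which is exactly the CSP. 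No element of $Cl_\bbz(\Ga)\setminus\Ga$ is ever exhibited directly, and no claim about representations factoring through the congruence completion is made or needed.

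In short: the surjectivity of $Cl_\bbz(\Ga)\twoheadrightarrow B_\rho(\Ga)$ for simply connected $\rho$ is the engine of the theorem, and your proposal never formulates or uses it; it substitutes an unsupported superrigidity claim that fails precisely for the groups (free groups, $\SL_2(\bbz)$) the theorem is meant to settle.
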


The  Theorem  is surprising  as  it shows  that  the  cases proved  by
Grothendieck himself  (which motivated him to suggest that  the duality
holds in  general) are essentially  the only cases where  this duality
holds.

Let us  note that the  assumption on $G$ is not really restrictive.
In Lemma  \ref{simplyconnectedsaturate}, we  show that for  every $\Ga
\le \GL_n(\bbz)$ we can find  an ``over" representation of $\Ga$ into
$\GL_m  (\bbz)$ (for some  $m$) whose  Zariski closure  is essentially
simply connected.

Theorem \ref{maintheorem}  implies Conjecture A of [Lub].

\begin{corr}\label{lubconjecture}  If $G$  is a  simply connected  semisimple $\bbq$-algebraic group, and  $\Ga$ a congruence
subgroup of  $G(\bbq)$, then $Cl_\bbz (\Ga)  = \Ga$ if and  only if $\Ga$
satisfies the (strict) congruence subgroup property.
\end{corr}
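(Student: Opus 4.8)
The plan is to deduce this from Theorem \ref{maintheorem} by checking that, for a congruence subgroup $\Ga$ of a simply connected semisimple $\bbq$-group $G$, the defining faithful representation $\Ga \hookrightarrow G \hookrightarrow \GL_m(\bbz)$ already has the property demanded in Theorem \ref{maintheorem}: its Zariski closure is essentially simply connected. Indeed, the Zariski closure of $\Ga$ is $G$ itself (a congruence, hence Zariski-dense, subgroup of a connected semisimple group), and $G^0/R(G) = G/1 = G$ is simply connected by hypothesis; so $G$ is essentially simply connected in the sense of the paper. Thus Theorem \ref{maintheorem} applies verbatim to this representation.

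Next I would translate the two sides of the equivalence. By Theorem \ref{maintheorem}, $Cl_\bbz(\Ga) = \Ga$ holds if and only if every finite index subgroup of $\Ga$ is closed in the congruence topology of $\GL_m(\bbz)$ — equivalently, in the congruence topology of $G(\bbq)$, which is the topology induced from $\GL_m(\bbz)$ by restriction. So the task reduces to showing that \emph{every finite index subgroup of $\Ga$ is congruence closed} if and only if \emph{$\Ga$ has the strict congruence subgroup property}. The latter says that the congruence kernel $C(G) = \ker(\hat\Ga \to \overline{\Ga})$, with $\overline{\Ga}$ the congruence completion, is trivial. When $C(G)$ is trivial, the profinite and congruence topologies on $\Ga$ coincide, so every finite index subgroup — being closed in the profinite topology — is congruence closed. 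Conversely, if every finite index subgroup of $\Ga$ is congruence closed, then the congruence topology on $\Ga$ coincides with the full profinite topology (the two topologies have the same closed finite-index subgroups, hence the same neighborhood basis of the identity), which forces $C(G) = 1$, i.e. the strict congruence subgroup property. Here one uses that $\Ga$ being finitely generated (a standard fact for arithmetic groups) ensures both completions are determined by their finite-index subgroups.

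The main point to be careful about — and the only place any real content enters beyond citing Theorem \ref{maintheorem} — is the interplay between ``the congruence topology of $\GL_m(\bbz)$ restricted to $\Ga$'' and ``the intrinsic congruence topology of $G(\bbq)$'': one must observe that a faithful $\bbq$-embedding $G \hookrightarrow \GL_m$ induces on $G(\bbq)$ exactly its congruence topology (compact open subgroups of $G(\bba_f)$ are, up to commensurability, intersections of $G(\bba_f)$ with compact open subgroups of $\GL_m(\bba_f)$), so ``congruence closed in $\GL_m(\bbz)$'' and ``congruence closed in $G$'' agree for subgroups of $\Ga$. Granting this, the equivalence of ``all finite-index subgroups congruence closed'' with the strict congruence subgroup property is the classical reformulation of triviality of the congruence kernel, and the corollary follows immediately.
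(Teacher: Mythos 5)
Your proof is correct and follows the paper's own (essentially one-line) approach: the corollary is deduced directly from Theorem~\ref{maintheorem} applied to the defining embedding $\Ga\hookrightarrow G\hookrightarrow\GL_m(\bbz)$, plus the classical reformulation of triviality of the congruence kernel as ``every finite-index subgroup of a congruence subgroup is a congruence subgroup,'' together with the observation that a finite-index subgroup is congruence closed if and only if it is open (being a finite union of cosets of a closed subgroup) if and only if it is a congruence subgroup.

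One small point to tighten: your parenthetical ``a congruence, hence Zariski-dense, subgroup'' is not literally true in general. If $G$ has a $\Q$-simple factor $G_1$ with $G_1(\R)$ compact, then $G_1(\bbz)$ is finite, and the Zariski closure $G'$ of a congruence subgroup $\Ga$ is a proper subgroup of $G$. This does not damage the argument, because $G'$ is still essentially simply connected: writing $G=G_1\times\cdots\times G_k$ with $G_1,\dots,G_j$ the factors with compact real points, the closure $G'$ contains $\{1\}^j\times\prod_{i>j}G_i$ (Borel density on the noncompact factors applied to the principal congruence subgroup inside $\Ga$) and $G'$ maps to a finite group in $\prod_{i\le j}G_i$; hence $(G')^0=\prod_{i>j}G_i$, which is simply connected semisimple. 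So Theorem~\ref{maintheorem} applies in all cases and your deduction goes through unchanged; it is just the intermediate claim ``Zariski closure $=G$'' that should be replaced by ``Zariski closure is essentially simply connected.''
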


In particular:
\begin{corr}\label{0.10}  $Cl_\bbz(F)   \neq  F$  for   every  finitely
generated  free  group  on   at  least  two  generators;  furthermore,
$Cl_\bbz (\SL_2(\bbz)) \neq \SL_2 (\bbz)$.
\end{corr}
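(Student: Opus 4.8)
The plan is to deduce Corollary \ref{0.10} from Corollary \ref{lubconjecture} (hence ultimately from Theorem \ref{maintheorem}) by exhibiting, for each relevant group, a faithful linear representation whose Zariski closure is simply connected semisimple, and then invoking the known failure of the (strict) congruence subgroup property. First I would handle $\SL_2(\bbz)$ directly: here the group already sits inside $\SL_2$, which is a simply connected semisimple $\bbq$-algebraic group, and $\SL_2(\bbz)$ is a congruence subgroup of $\SL_2(\bbq)$ with Zariski closure all of $\SL_2$. By Serre's theorem, $\SL_2(\bbz)$ fails the strict congruence subgroup property (its congruence kernel is infinite — in fact a free profinite group of countable rank), so Corollary \ref{lubconjecture} immediately gives $Cl_\bbz(\SL_2(\bbz)) \neq \SL_2(\bbz)$.

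Next I would treat a finitely generated free group $F$ of rank $r \ge 2$. The key observation is that $F$ embeds as a finite-index (hence Zariski-dense) subgroup of $\SL_2(\bbz)$: indeed, the commutator subgroup of $\SL_2(\bbz)$, or the principal congruence subgroup $\Gamma(2)$ modulo torsion, is free of finite rank, and any free group of rank $\ge 2$ contains, and is contained in, free groups of all finite ranks $\ge 2$ as finite-index subgroups (via the Nielsen–Schreier index formula $r' - 1 = d\,(r-1)$). Thus $F$ is commensurable with $\SL_2(\bbz)$ and so is itself a congruence (arithmetic) subgroup of $\SL_2(\bbq)$ with Zariski closure $\SL_2$. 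Since the congruence subgroup property is a commensurability invariant and fails for $\SL_2(\bbz)$, it fails for $F$; Corollary \ref{lubconjecture} then yields $Cl_\bbz(F) \neq F$. I should be slightly careful about the direction of the embedding: I want $F$ realized \emph{as} an arithmetic group, i.e. $F$ itself should be a finite-index subgroup of $\SL_2(\bbz)$ — this holds precisely because every free group of rank $\ge 2$ is isomorphic to a finite-index subgroup of the free group of rank $2$, which in turn is (up to finite index and the torsion quotient) $\SL_2(\bbz)$.

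The main obstacle, such as it is, is purely bookkeeping: verifying that the chosen free subgroup of $\SL_2(\bbz)$ has the right rank and finite index, and confirming that "congruence subgroup property" in the sense used in Theorem \ref{maintheorem} and Corollary \ref{lubconjecture} (closedness of all finite-index subgroups in the congruence topology, equivalently triviality of the congruence kernel) is indeed a commensurability invariant — the latter is standard, since a finite-index subgroup and its ambient group have commensurable profinite and congruence completions, so their congruence kernels are commensurable and one is finite iff the other is. Once these routine points are in place, both assertions of Corollary \ref{0.10} follow formally. One alternative phrasing, avoiding free groups of other ranks, is to note that $F_2$ is virtually $\SL_2(\bbz)$ and any $F_r$ is virtually $F_2$, so all are virtually $\SL_2(\bbz)$; I would present whichever is cleaner, but the substantive input in every case is Serre's failure of CSP for $\SL_2(\bbz)$ together with Corollary \ref{lubconjecture}.
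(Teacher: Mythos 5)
Your proposal is essentially correct and follows what must be the paper's intended (but unwritten) argument: both statements reduce, via Theorem~\ref{maintheorem} and Serre's classical result that $\SL_2(\bbz)$ has an infinite congruence kernel, to the failure of the congruence subgroup property for $\SL_2(\bbz)$ and for any group commensurable with it.

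There is one small but worth-fixing imprecision in the free-group case. You cite Corollary~\ref{lubconjecture}, whose hypothesis as stated requires $\Gamma$ to be a \emph{congruence} subgroup of $G(\bbq)$. A finite-index free subgroup of $\SL_2(\bbz)$ is of course arithmetic, but it need not be a congruence subgroup — indeed, the existence of many non-congruence finite-index subgroups of $\SL_2(\bbz)$ is exactly the phenomenon being exploited. Your parenthetical ``congruence (arithmetic) subgroup'' conflates the two, and as written the hypothesis of Corollary~\ref{lubconjecture} is not verified for an arbitrary realization of $F_r$ inside $\SL_2(\bbz)$. The cleanest repair is to bypass Corollary~\ref{lubconjecture} entirely and invoke Theorem~\ref{maintheorem} directly: $F$ is a finitely generated subgroup of $\GL_2(\bbz)$ whose Zariski closure is $\SL_2$ (Zariski density holds since $F$ has finite index in the Zariski-dense $\SL_2(\bbz)$), $\SL_2$ is essentially simply connected, and the CSP in the sense of Theorem~\ref{maintheorem} is a commensurability invariant — precisely the observation you make — so $F$ fails it because $\SL_2(\bbz)$ does. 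Alternatively, one can note that the argument proving Corollary~\ref{lubconjecture} from Theorem~\ref{maintheorem} in fact works for arbitrary arithmetic subgroups of $G(\bbq)$, not just congruence ones, since the CSP criterion in Theorem~\ref{maintheorem} depends only on the commensurability class. Either fix is one line; with it, your Nielsen--Schreier bookkeeping ($F_r \hookrightarrow F_2 \hookrightarrow \SL_2(\bbz)$ with finite index at each stage) and the $\SL_2(\bbz)$ case are both fine.
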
  In fact,  it will follow from our results  that   $Cl_\bbz(F)$ is
uncountable.  \\

Before moving on to the last application, let us say a few words about
how Proposition \ref{arithmeticchevalley} helps to  prove a result like Theorem \ref{maintheorem}.
The  description of $Cl_\bbz  (\Ga)$ as in  Equation \ref{profinitegrothendieck}  implies  that
\begin{equation}\label{eq0.2}
Cl_\bbz  (\Ga)   =  \underset{\rho}{\lim\limits_{\leftarrow}}  \quad
\overline{\rho(\Ga)}  \end{equation}
when  the limit  is over  all
$(\rho, V)$  when $V$  is  a  finitely generated  abelian  group, $\rho$  a
representation $\rho:  \Ga \to \Aut (V)$ and  $\overline{\rho (\Ga)} =
\hat\rho (\hat\Ga)\cap \Aut (V) \subseteq  \Aut (\hat V)$.  This is an
inverse limit  of countable discrete groups,  so one can  not say much
about it unless the connecting homomorphisms are surjective, which is,
in general, not the case. Now, $\overline{\rho(\Ga)}$ is the congruence closure of $\rho(\Ga)$ in $\Aut (V)$ and Proposition \ref{arithmeticchevalley} shows that the corresponding maps are ``almost" onto, and  are  even  surjective if  the
modules $V$ are what we call here ``simply connected representations",
namely those cases  when $V$ is torsion free  (and hence isomorphic to
$\bbz^n$ for  some $n$) and  the Zariski closure of  $\rho(\Gamma)$ in $\Aut (\bbc \underset{\bbz}{\otimes} V) =
\GL_n (\bbc)$ is essentially  simply connected.  We show further that
the category $\Mod_{\bbz}(\Ga)$ is  ``saturated" with such modules (see Lemma
\ref{simplyconnectedsaturate})  and  we deduce  that  one can  compute
$Cl_\bbz(\Ga)$  as  in  Equation \ref{profinitegrothendieck}  by considering  only  simply  connected
representations.   We can  then  use Proposition \ref{arithmeticchevalley}(b),  and get  a fairly  good
understanding  of $Cl_\bbz(\Ga)$.   This enables  us to  prove Theorem
\ref{maintheorem}. In addition, we also deduce:
\begin{corr}\label{simplyconnectedonto}  If $(\rho, V)$ is  a simply  connected representation,
then   the  induced   map  $Cl_\bbz(\Ga)   \to  \Aut   (V)$   is  onto $Cl_\rho(\Gamma): =
\overline{\rho(\Ga)}$ - the congruence closure of $\Ga$.
\end{corr}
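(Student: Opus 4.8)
The plan is to realize $Cl_\bbz(\Ga)$ as an inverse limit along which the simply connected representations are \emph{cofinal} and the transition maps are \emph{surjective}; the surjectivity of each projection of such an inverse limit is then formal. Recall from \eqref{eq0.2} that $Cl_\bbz(\Ga)=\varprojlim_{(\rho,V)}\overline{\rho(\Ga)}$, the limit running over the directed system of finitely generated abelian representations of $\Ga$, with $(\rho,V)\preceq(\rho',V')$ whenever $V$ lies in the tensor subcategory generated by $V'$ (so that there is a natural connecting map $\overline{\rho'(\Ga)}\to\overline{\rho(\Ga)}$, namely the action of $\overline{\rho'(\Ga)}$ on the subobject $V$ of a tensor construction on $V'$). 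Since $\Ga$ is finitely generated and each $\Aut(V)$ is countable, there are only countably many $(\rho,V)$ up to isomorphism, so this directed system is countable. By Lemma \ref{simplyconnectedsaturate} every finitely generated abelian representation is dominated by a simply connected one; hence the simply connected representations are cofinal, and being countable they contain a cofinal chain $(\rho_1,V_1)\preceq(\rho_2,V_2)\preceq\cdots$ of simply connected representations, along which $Cl_\bbz(\Ga)=\varprojlim_j\overline{\rho_j(\Ga)}$.

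The crux is the claim: \emph{if $(\rho,V)\preceq(\rho',V')$ are both simply connected, then the connecting map $\overline{\rho'(\Ga)}\to\overline{\rho(\Ga)}$ is onto.} Let $G,G'$ be the Zariski closures of $\rho(\Ga),\rho'(\Ga)$; since $V$ is built from $V'$ by tensor operations, $\rho$ extends to an algebraic action of $G'$, producing a surjective $\bbq$-morphism $\vp\colon G'\to G$ with $\vp\circ\rho'=\rho$, and the connecting map is $\vp$ restricted to $\overline{\rho'(\Ga)}$ (note $\overline{\rho'(\Ga)}\subseteq G'(\bbq)$, as $G'(\bbq)$ is congruence closed, so $G'$ is also the Zariski closure of $\overline{\rho'(\Ga)}$). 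Since $\vp$ is continuous for the congruence topologies, $\vp(\overline{\rho'(\Ga)})\subseteq\overline{\vp(\rho'(\Ga))}=\overline{\rho(\Ga)}$; for the reverse inclusion it suffices to show $\vp(\overline{\rho'(\Ga)})$ is congruence closed, since it is then a closed subgroup lying between $\rho(\Ga)$ and its closure $\overline{\rho(\Ga)}$. This is where the hypothesis that $(\rho,V)$, hence $(\rho',V')$, is simply connected enters: applying Proposition \ref{congruenceimage} to the congruence closed group $\overline{\rho'(\Ga)}$ (whose Zariski closure $G'$ is essentially simply connected) produces a congruence subgroup $\Delta$ of $G'$ with $[\Delta,\Delta]\le\overline{\rho'(\Ga)}^0\le\overline{\rho'(\Ga)}$, and shows the image of $\overline{\rho'(\Ga)}$ in $G'/R(G')$ is a congruence subgroup. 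Now $\vp$ carries $R(G')$ onto $R(G)$ and induces a surjection $G'/R(G')\to G/R(G)$ of simply connected semisimple groups, so Proposition \ref{arithmeticchevalley}(ii) shows the image of $\vp(\overline{\rho'(\Ga)})$ in $G/R(G)$ is a congruence subgroup, which coincides with the image of $\overline{\rho(\Ga)}$ there (both being the congruence closure of the image of $\rho(\Ga)$); combined with the sandwich $[\vp(\Delta),\vp(\Delta)]\le\vp(\overline{\rho'(\Ga)})\le\overline{\rho(\Ga)}$, in which $\vp(\Delta)$ is a congruence subgroup of $G$ by Proposition \ref{arithmeticchevalley}(ii), a short argument matching the contributions of the solvable radical gives $\vp(\overline{\rho'(\Ga)})=\overline{\rho(\Ga)}$, proving the claim.

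Granting the claim, the corollary follows: given a simply connected $(\rho,V)$, it is dominated by some $(\rho_k,V_k)$ in the cofinal chain; for $h\in\overline{\rho(\Ga)}=Cl_\rho(\Ga)$ we lift $h$ along the surjection $\overline{\rho_k(\Ga)}\to\overline{\rho(\Ga)}$, lift successively along the surjections $\overline{\rho_{j+1}(\Ga)}\to\overline{\rho_j(\Ga)}$ for $j\ge k$, and push the chosen element down for $j<k$, obtaining a compatible family in $\varprojlim_j\overline{\rho_j(\Ga)}=Cl_\bbz(\Ga)$ whose image under the projection $Cl_\bbz(\Ga)\to\overline{\rho(\Ga)}$ (which factors through $\overline{\rho_k(\Ga)}$) is $h$. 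The main obstacle is the claim: its semisimple part is handled cleanly by Proposition \ref{arithmeticchevalley}(ii), while the delicate point is verifying that $\vp(\overline{\rho'(\Ga)})$ does not miss any congruence closed part of $\overline{\rho(\Ga)}$ lying in the radical, which is exactly where the structural content of Proposition \ref{congruenceimage} is required; this is close to what is needed for Theorem \ref{maintheorem} and is most naturally recorded alongside it.
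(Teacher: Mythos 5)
Your overall strategy---realize $Cl_\bbz(\Ga)$ as an inverse limit over a chain of simply connected representations along which the transition maps are surjective, using Lemma \ref{simplyconnectedsaturate} for cofinality---is exactly the paper's strategy in Proposition \ref{surjective} (the paper takes $\tau_n=\rho_1\oplus\cdots\oplus\rho_n$, invoking Lemma \ref{SCdirectsum} to stay simply connected; you should make your implicit appeal to this lemma explicit, since it is what makes the countable system directed and hence lets you extract a cofinal chain). The corollary is then, as you say, a formal consequence once surjectivity of the transition maps is established. The divergence, and the problem, is in your proof of that ``crux claim.''

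The paper proves transition-map surjectivity (Lemma \ref{surjectiveclosure}) via Proposition \ref{pr2.3}: a congruence-closed $\La$ with Zariski closure $H$ is characterized by $\La\cap[H^0,H^0]$ being a congruence subgroup of $[H^0,H^0]$. This gives the clean chain: $D_{\rho'}=\overline{\rho'(\Ga)}\cap Der_{\rho'}$ is a congruence subgroup of $Der_{\rho'}$, its image under $\vp$ is a congruence subgroup of $Der_\rho$ by Lemma \ref{surjectivemorphisms}(iv), so $\vp(\overline{\rho'(\Ga)})\cap Der_\rho$ is sandwiched between two congruence subgroups, and by Proposition \ref{pr2.3} again $\vp(\overline{\rho'(\Ga)})$ is congruence closed, hence equals $\overline{\rho(\Ga)}$. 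Your route instead invokes Proposition \ref{congruenceimage} and Proposition \ref{arithmeticchevalley}(ii), which do give you that the images of $\vp(\overline{\rho'(\Ga)})$ and of $\overline{\rho(\Ga)}$ in the semi-simple quotient $G/R(G)$ agree, but at that point you still only have the inclusion $\vp(\overline{\rho'(\Ga)})\subseteq\overline{\rho(\Ga)}$ with equal images mod $R(G)$; to conclude equality you must show $\overline{\rho(\Ga)}\cap R(G)\subseteq\vp(\overline{\rho'(\Ga)})$, and this is precisely what you wave away as ``a short argument matching the contributions of the solvable radical.'' That step is not short, is not given, and is not a consequence of the sandwich $[\vp(\Delta),\vp(\Delta)]\le\vp(\overline{\rho'(\Ga)})\le\overline{\rho(\Ga)}$ (that sandwich controls the abelian quotient, not the intersection with the radical). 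What actually closes it is Proposition \ref{pr2.3}, which you never cite. So the proposal has a genuine gap at the crucial point; to repair it, replace the appeal to Proposition \ref{congruenceimage} by Proposition \ref{pr2.3} and run the paper's argument from Lemma \ref{surjectiveclosure}.
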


From Corollary \ref{simplyconnectedonto}  we can deduce our last
application.

\subsection*{Thin groups} In recent years, following \cite{Sar},
there has been a  lot of interest in the  distinction between thin
subgroups and arithmetic subgroups of algebraic groups.  Let us
recall:

\begin{definition}\label{0.12}  A subgroup  $\Ga  \le \GL_n(\bbz)$  is
called {\bf thin} if it is  of infinite index in $G \cap \GL_n(\bbz)$,
when  $G$ is  its Zariski  closure in  $\GL_n$.  For  a  general group
$\Ga$, we  will say that it  is a {\bf thin  group} (or it  {\bf has a
thin representation})  if for some  $n$ there exists  a representation
$\rho:\Ga \to \GL_n(\bbz)$ for which $\rho(\Ga)$ is thin.
\end{definition}

During the last five decades a lot of attention was given to the study
of  arithmetic groups,  with  many remarkable  results, especially  for
those of higher rank (cf.  \cite{Mar}, \cite{Pl-Ra} and the references
therein). Much less  is known about thin groups.  For  example, it is not
known if there exists a thin group with property $(T)$.  Also, given a
subgroup of an arithmetic group (say, given by a set of generators) it
is  difficult to decide  whether it  is thin  or arithmetic  (i.e., of
finite or infinite index in its integral Zariski closure).

It  is therefore  of interest  and  perhaps even  surprising that  our
results enable us to  give a purely group theoretical characterization
of thin  groups $\Ga \subset GL_n(\bbz)$.  Before  stating the precise  result, we make the topology on $Cl_\bbz(\Ga)$ explicit.
             If we take the class of simply connected representations $(\rho,V)$ for computing the group $Cl_\bbz(\Ga)$, one can then show that
$Cl_\bbz(\Ga)/\Ga$ is a {\it closed} subspace of the product $\prod _\rho (Cl_\rho(\Ga)/\Ga)$, where each $Cl_\rho(\Ga)/\Ga$ is given the discrete topology.  This is the topology on the quotient space $Cl_\bbz(\Ga)/\Ga$ in the following theorem. We can now state:

\begin{thm}\label{thincriterion} Let  $\Ga$ be finitely  generated $\bbz$-linear group.  Then
$\Ga$ is a  thin group if and  only if it satisfies (at  least) one of
the following conditions:
\begin{enumerate}
\item $\Ga$ is not $FAb$ (namely, it does have a finite index subgroup
with an infinite abelianization), or
\item $Cl_\bbz (\Ga)/\Ga $ is not compact.
\end{enumerate}
\end{thm}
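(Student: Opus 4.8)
The plan is to prove both directions of the equivalence, treating the two sufficient conditions (1) and (2) separately against the negation ``$\Ga$ is arithmetic'', i.e.\ of finite index in $G\cap\GL_n(\bbz)$ where $G$ is the Zariski closure of $\Ga$ in $\GL_n$ (after replacing $\Ga$ by a faithful $\bbz$-linear representation; note the notion of thinness is representation-dependent, so we fix a faithful $\rho:\Ga\hookrightarrow\GL_n(\bbz)$ throughout and work with it). First I would dispose of the easy implications. If $\Ga$ is arithmetic, then it has the congruence subgroup property in a strong enough form (or rather, by Proposition~\ref{congruenceimage} and the structure of $G$, the congruence topology is comparable to the profinite topology on $\Ga$ up to the congruence kernel), so every finite-index subgroup is congruence-closed, forcing $\Ga$ to be $FAb$ --- indeed if $\Ga$ had a finite-index subgroup with infinite abelianization it would have a non-congruence finite-index subgroup, contradicting that an arithmetic group in its natural representation is congruence-closed in the relevant sense --- wait, more carefully: arithmetic groups can fail CSP, but the point is that when $\Ga$ is arithmetic the obstruction is governed by the (finite or abelian-by-finite) congruence kernel, and in any case $FAb$ holds for arithmetic groups in higher rank by Margulis and for the remaining cases is handled by the explicit structure of $R(G)$. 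I will need to be careful here: the correct statement is that an arithmetic $\Ga$ is $FAb$ unless $G$ has a suitable rank-one or unipotent factor, but then such a $\Ga$ is not thin either only because of how $G\cap\GL_n(\bbz)$ sits, so condition (1) really is forced to fail for arithmetic $\Ga$. The cleanest route: if $\Ga$ is arithmetic, then by Proposition~\ref{arithmeticchevalley}(ii) applied to $G\to G/R(G)$ together with saturation (Lemma~\ref{simplyconnectedsaturate}), $Cl_\bbz(\Ga)/\Ga$ is compact --- because it embeds as a closed subspace of a product of the finite (or profinite-kernel-sized) groups $Cl_\rho(\Ga)/\Ga$, each of which is \emph{finite} when $\Ga$ is arithmetic --- and simultaneously $\Ga$ is $FAb$, so neither (1) nor (2) holds.

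Conversely, suppose $\Ga$ is thin; I must produce condition (1) or (2). Split on whether $\Ga$ is $FAb$. If not, (1) holds and we are done. So assume $\Ga$ is $FAb$ and thin, and the goal is to show $Cl_\bbz(\Ga)/\Ga$ is not compact. Pass to $G/R(G)$, which by the $FAb$ assumption (killing the abelianization obstruction) and Lemma~\ref{simplyconnectedsaturate} we may take essentially simply connected; let $\bar\Ga$ be the image of $\Ga$ in $G/R(G)$. The key structural input is that $\Ga$ thin in $G$ with $G$ ($FAb$ forces $G/R$ semisimple and in fact, after saturation) simply connected semisimple means $\bar\Ga$ is a thin subgroup of the arithmetic group $(G/R)(\bbz)$ --- here I would invoke that the kernel $\Ga\cap R$ is an arithmetic (hence, being solvable, virtually nilpotent and ``small'') normal subgroup, so $\Ga$ thin in $G$ is equivalent to $\bar\Ga$ thin in $G/R$, at least when $G$ is essentially simply connected. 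Then the strategy is to use Corollary~\ref{simplyconnectedonto}: the simply connected representation $\bar\rho$ coming from $G/R$ gives a surjection $Cl_\bbz(\Ga)\twoheadrightarrow Cl_{\bar\rho}(\Ga)=\overline{\bar\rho(\Ga)}$, the congruence closure of $\bar\Ga$ in $(G/R)(\bbz)$. Since $\bar\Ga$ is thin, $\overline{\bar\Ga}$ is an honest congruence (arithmetic) subgroup containing $\bar\Ga$ with \emph{infinite} index. Hence $Cl_{\bar\rho}(\Ga)/\Ga$ is infinite and discrete, so non-compact; and since $Cl_\bbz(\Ga)/\Ga$ surjects continuously onto it (the map $Cl_\bbz(\Ga)/\Ga \to Cl_{\bar\rho}(\Ga)/\bar\Ga$ being continuous and, by the surjection of groups and the fact that $\Ga$ maps onto $\bar\Ga$, surjective), $Cl_\bbz(\Ga)/\Ga$ cannot be compact either. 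This gives (2).

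The main obstacle I anticipate is the reduction step tying thinness of $\Ga$ in $G$ to thinness of $\bar\Ga$ in $G/R$, and more precisely pinning down exactly when $FAb$ lets us assume $G$ essentially simply connected semisimple \emph{without changing the thin-vs-arithmetic status}. Saturation (Lemma~\ref{simplyconnectedsaturate}) changes the representation, and thinness is a priori representation-dependent, so I need: (a) if $\Ga$ is thin in one faithful representation then it is thin in any faithful representation whose Zariski closure has the same semisimple quotient up to isogeny --- this should follow because the index $[G\cap\GL_n(\bbz):\Ga]$ being infinite is detected by the image in $G/R$ when $\Ga$ is $FAb$, since the $R$-part contributes only a virtually-polycyclic, hence commensurability-controlled, amount; and (b) conversely arithmeticity is also preserved. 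The delicate case is when $G/R$ has rank-one factors, where CSP can fail for arithmetic subgroups and $Cl_{\bar\rho}(\bar\Ga)/\bar\Ga$ could be infinite even for arithmetic $\bar\Ga$; here I must use that for arithmetic $\bar\Ga$ the congruence closure $\overline{\bar\Ga}$ equals $\bar\Ga$ up to \emph{finite} index times the congruence-kernel contribution, and invoke that the congruence kernel, while possibly infinite for rank-one forms, still yields a \emph{compact} quotient $Cl_\bbz(\Ga)/\Ga$ --- this is exactly where the precise description of the topology on $Cl_\bbz(\Ga)/\Ga$ as a closed subspace of $\prod_\rho Cl_\rho(\Ga)/\Ga$ with discrete factors, stated just before the theorem, does the work: compactness is equivalent to each factor being finite \emph{and} the product being a genuine inverse limit, and for arithmetic $\bar\Ga$ one shows finiteness of each $Cl_\rho(\bar\Ga)/\bar\Ga$ via Proposition~\ref{arithmeticchevalley}. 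Getting this compactness criterion stated and used correctly --- in particular the claim ``$Cl_\bbz(\Ga)/\Ga$ compact iff $\Ga$ arithmetic (given $FAb$)'' --- is the crux, and I would prove it as a standalone lemma before assembling the theorem.
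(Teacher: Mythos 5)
There is a genuine gap, and it stems from a misreading of what ``$\Ga$ is not a thin group'' means. The paper's Definition~\ref{thindefinition} makes thinness a property of the abstract group: $\Ga$ is thin if \emph{some} faithful $\rho:\Ga\to\GL_n(\bbz)$ has $\rho(\Ga)$ of infinite index in its integral Zariski closure, and so ``not thin'' means \emph{every} faithful integral representation is arithmetic. You instead fix one faithful embedding and negate thinness only for that embedding (``we fix a faithful $\rho:\Ga\hookrightarrow\GL_n(\bbz)$ throughout and work with it''). With that weaker hypothesis your first direction is simply false: $\Ga=\bbz$ is arithmetic in $\mathbb{G}_a$ but is not $FAb$, so ``arithmetic in the fixed representation'' cannot imply $FAb$. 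Relatedly, your compactness claim --- that each $Cl_\rho(\Ga)/\Ga$ is finite once $\Ga$ is arithmetic --- needs finiteness of index in the integral Zariski closure \emph{for every} $\rho$, which is precisely the ``not thin'' hypothesis you don't have. In fact you never actually prove $FAb$; you hand-wave at CSP-type arguments, notice they fail, and then assert ``and simultaneously $\Ga$ is $FAb$.'' The paper's trick here is completely different and is the one missing idea: if $\Ga$ is not $FAb$, one \emph{constructs} a thin representation by mapping a finite-index subgroup onto $\bbz$ and tensoring the given embedding with a representation of $\bbz$ whose Zariski closure is $\mathbb{G}_a\times S$ (a matrix that is neither semisimple nor unipotent with semisimple part of infinite order), so that $\rho\times\tau$ is thin. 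This shows ``not thin $\Rightarrow$ $FAb$'' directly, after which compactness follows from Corollary~\ref{closureinverse}: $Cl(\Ga)$ is a countable inverse limit of congruence groups $B_n$ with surjective transition maps, $\Ga$ is the corresponding inverse limit of the $\rho_n(\Ga)$, and each $B_n/\rho_n(\Ga)$ is finite because $\Ga$ is not thin in any $\rho_n$.

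Your second direction (thin and $FAb$ implies non-compact) is substantively on the right track, but needlessly routed through $G/R(G)$. You then must confront whether thinness of $\Ga$ in $G$ descends to thinness of $\bar\Ga$ in $G/R$, an issue you flag yourself as delicate. The paper avoids this entirely: since $\Ga$ is thin it has a faithful thin representation, which by Lemma~\ref{simplyconnectedsaturate} (and the easy observation that a representation containing a thin one is thin, via Lemma~\ref{surjectivemorphisms}(ii)) can be taken simply connected; then Proposition~\ref{surjective} gives a surjection $Cl(\Ga)\twoheadrightarrow B_\rho(\Ga)$, the $FAb$ hypothesis and Corollary~\ref{closureinverse} make $B_\rho(\Ga)$ an honest congruence subgroup, so $B_\rho(\Ga)/\rho(\Ga)$ is an infinite discrete quotient of $Cl(\Ga)/\Ga$ and compactness fails. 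No passage to the semisimple quotient is needed.
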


\medskip
\noindent {\bf Warning} \ There are groups $\Ga$ which can be realized
both as  arithmetic groups as well  as thin groups.   For example, the
free group is an arithmetic subgroup of $\SL_2(\bbz)$, but at the same
time a thin subgroup of every semisimple group, by a well known result
of Tits \cite{Ti}.  In our terminology this  is a thin group.

\medskip

T.N.V. thanks the Math Department of the Hebrew University for great hospitality while a major part of this work was done.  He would also like to thank the JC Bose fellowship (SR/S2/JCB-22/2017) for support during the period 2013-2018.

\smallskip

The authors thank the Math Department of the University of Marseilles, and the conference at Oberwolfach, where the work was completed.  We  would especially like to thank Bertrand Remy for many interesting discussions and for his warm hospitality.

\smallskip

A.L. is indebted to ERC, NSF and BSF for support.

\section{Preliminaries on Algebraic Groups over $\Q$}

We recall the definition of an essentially simply connected group:
\begin{defn}  Let $G$  be  a  linear algebraic  group  over $\C$  with
maximal connected  normal solvable subgroup $R$ (i.e.  the radical
of  $G$)  and identity  component  $G^0$.  We  say  that  $G$ is  {\bf
essentially  simply connected}  if  the semi-simple  part  $G^0/R=H$
is  a simply connected.
\end{defn}

Note that $G$ is essentially  simply connected if and only  if,  the quotient
$G^0/U$ of the  group $G^0$ by its unipotent radical  $U$ is a product
$H_{ss}\times S$ with $H_{ss}$ simply connected and semi-simple,
and $S$ is a torus.

For  example,  a semi-simple  connected  group  is essentially  simply
connected  if  and   only  if  it  is  simply   connected.  The  group
$\mathbb{G}_m\times  \SL_n$ is  essentially simply  connected; however,
the  radical of  the group  $\GL_n$  is the  group $R$  of scalars  and
$\GL_n/R=\SL_n/centre$, so $\GL_n$   is {\it not}  essentially simply  connected.  We
will show later (Lemma \ref{surjectivemorphisms}(iii))
that every  group has a  finite cover which  is essentially
simply connected.

\begin{lemma}   \label{essentiallysimplyconnected}  Suppose  $G\subset
G_1\times G_2$  is a subgroup of  a product of  two essentially simply
connected linear  algebraic groups  $G_1,G_2$ over $\C$;  suppose that
the  projection   $\pi  _i$  of   $G$  to  $G_i$  is   surjective  for
$i=1,2$. Then $G$ is also essentially simply connected.
\end{lemma}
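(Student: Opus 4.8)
The plan is to strip away the solvable parts at once and reduce to a structural fact about semisimple groups. First I would reduce to the case where $G$, $G_1$, $G_2$ are all connected: since $\pi_i\colon G\to G_i$ is surjective, $\pi_i(G^0)$ is a finite-index connected subgroup of $G_i$, hence equals $G_i^0$, so $G^0\subseteq G_1^0\times G_2^0$ and its projections are still onto; moreover $R(G)=R(G^0)$ (the radical of $G^0$ is characteristic in $G^0$, hence normal in $G$, while $R(G)\subseteq G^0$ is connected normal solvable in $G^0$), so $G$ is essentially simply connected precisely when $G^0$ is, and similarly for $G_1,G_2$. Thus I may assume $G,G_1,G_2$ are connected and put $H_i:=G_i/R(G_i)$, which is simply connected semisimple by hypothesis.

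Since $\pi_i(R(G))$ is a connected normal solvable subgroup of $G_i$, it lies in $R(G_i)$, so the map $p\colon G\to H_1\times H_2$ obtained by composing $\pi_i$ with $G_i\to H_i$ kills $R(G)$ and factors through $\bar G:=G/R(G)$, which is connected semisimple. Let $K:=p(G)\subseteq H_1\times H_2$: it is connected, surjects onto each $H_i$ (because $G$ does), and it is semisimple, since its radical $R(K)$ projects to a connected normal solvable, hence trivial, subgroup of each $H_i$, and $K$ injects into $H_1\times H_2$. I would finish via two facts: (a) the induced surjection $\bar p\colon\bar G\to K$ is a central isogeny — its kernel is the image of $\ker p=G\cap(R(G_1)\times R(G_2))$, hence solvable, hence (being normal in the semisimple group $\bar G$) finite, hence central in the connected group $\bar G$; and (b) $K$ is simply connected. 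Granting (b), a central isogeny onto a simply connected semisimple group is an isomorphism, so $\bar G\cong K$ is simply connected, i.e.\ $G$ is essentially simply connected.

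The crux — the step needing the real argument — is (b): a connected semisimple subgroup $K$ of a product $H_1\times H_2$ of simply connected semisimple groups that surjects onto each factor is itself simply connected. Here I would pass to the simply connected cover $\rho\colon\widetilde K\to K$ and write $\widetilde K=\prod_{j\in J}S_j$ as the (genuine, since $\widetilde K$ is simply connected) direct product of its simple factors. For each $i$ the composite $f_i\colon\widetilde K\to K\to H_i$ is surjective; since connected normal subgroups of $\prod_{j}S_j$ are sub-products, $(\ker f_i)^0=\prod_{j\notin J_i}S_j$ for some $J_i\subseteq J$, and then $f_i$ induces a central isogeny $\prod_{j\in J_i}S_j\to H_i$ which, $H_i$ being simply connected, is an isomorphism; in particular $\ker f_i=\prod_{j\notin J_i}S_j$ exactly. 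Now $\ker\rho=\ker f_1\cap\ker f_2=\prod_{j\notin J_1\cup J_2}S_j$ (using that $K\hookrightarrow H_1\times H_2$ is injective, so $\ker\rho$ is the kernel of $(f_1,f_2)$), and a product of simple groups is finite only when it is trivial; since $\ker\rho$ is finite this forces $J_1\cup J_2=J$, whence $\ker\rho=\{1\}$ and $K=\widetilde K$ is simply connected. I expect the only points requiring care to be the bookkeeping in (b) and the standard structure facts invoked (connected normal subgroups of a simply connected semisimple group are sub-products of simple factors; central isogenies onto simply connected groups are isomorphisms); everything else is routine.
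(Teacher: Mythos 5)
Your proof is correct, and while its overall skeleton parallels the paper's (reduce to the connected case, push the radical into $R_1\times R_2$, pass to the image $K=G/R^*$ in $H_1\times H_2$, and recognize the last step as a central isogeny onto a simply connected group), the key simple-connectedness argument is genuinely different. The paper works ``from below'': it takes $K_1:=\ker(H\ra H_1)$ where $H=G/R^*$, shows $K_1$ is connected (because $H/K_1^0\ra H_1$ is an isogeny onto a simply connected group, hence an isomorphism), observes $K_1\subset\{1\}\times H_2$ is a connected normal subgroup of $H_2$ and therefore a sub-product of its simple factors, hence simply connected, and then concludes simple connectedness of $H$ from the extension $1\ra K_1\ra H\ra H_1\ra 1$. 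You instead work ``from above'': you pass to the universal cover $\widetilde K\ra K$, identify each $\ker f_i$ as a sub-product of the simple factors of $\widetilde K$ (using simple connectedness of $H_i$ to kill the finite part), intersect, and force $\ker(\widetilde K\ra K)$ to be trivial because a finite sub-product of simple groups is trivial. Both arguments pivot on the same structural fact --- a connected normal subgroup of a product of simple algebraic groups is a sub-product --- but apply it to $H_2$ and to $\widetilde K$ respectively. Your treatment of the radical is also slightly leaner: you only use the easy inclusion $\pi_i(R(G))\subseteq R(G_i)$, whereas the paper first establishes the equality $\pi_i(R)=R_i$ via a Zariski-dense-compact-subgroup argument, which the proof doesn't actually require. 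Finally, you make explicit the last step (a central isogeny onto a simply connected group is an isomorphism, applied to $\bar G\ra K$) that the paper uses implicitly when it concludes $G/R=G/R^*$.
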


\begin{proof} Assume, as we may, that $G$ is connected. Let $R$ be the radical  of $G$. The projection of $R$ to
$G_i$   is   normal   in    $G_i$   since   $\pi_i:   G\ra   G_i$   is
surjective. Moreover, $G_i/\pi _i(R)$  is the image of the semi-simple
group $G/R$; the latter has a Zariski dense compact subgroup, hence so
does $G_i/\pi  _i(R)$; therefore, $G_i/\pi _i(R)$ is  reductive and is
its own  commutator.  Hence $G_i/\pi  _i(R)$ is semi-simple  and hence
$\pi _i(R)=R_i$ where $R_i$ is the radical of $G_i$.
Let $R^* = G \cap (R_1 \times R_2)$. Since $R_1 \times R_2$ is the radical of $G_1 \times G_2$, it follows that $R^*$ is a solvable normal subgroup of $G$ and hence its connected component is contained in $R$.  Since $R \subseteq R_1 \times R_2$, it follows that $R$ is precisely the connected component of the identity of $R^*$.
We then  have the inclusion $G/R^*\subset G_1/R_1\times
G_2/R_2$ with projections again being surjective.

By   assumption,    each   $G_i/R_i=H_i$   is    semi-simple,   simply
connected.    Moreover    $G/R^*=H$    where   $H$    is    connected,
semi-simple. Thus we have the inclusion $H\subset H_1\times H_2$. Now,
$H\subset H_1\times H_2$ is such  that the projections of $H$ to $H_i$
are surjective,  and each  $H_i$ is simply  connected. Let $K$  be the
kernel of the  map $H\ra H_1$ and $K^0$  its identity component.  Then
$H/K^0 \ra H_1$ is a surjective map of connected algebraic groups with
finite  kernel. The simple  connectedness of  $H_1$ then  implies that
$H/K^0=H_1$ and  hence that $K=K^0 \subset \{1\}\times  H_2$ is normal
in $H_2$.

Write  $H_2=F_1\times \cdots  \times  F_t$ where  each  $F_i$ is  {\it
simple} and  simply connected.  Now, $K$ being a closed normal subgroup of $H_2$ must be equal to $\prod  _{i\in  X} F_i$ for some subset $X$ of $\{ 1, \cdots, t\}$,
 and is  simply
connected.  Therefore, $K=K^0$ is simply connected.

From the preceding two paragraphs, we have that both $H/K$ and $K$ are
simply connected, and hence so is  $H = G/R^*$. Since $R$ is the connected component of $R^*$ and $G/R^*$ is simply connected, it follows that $G/R = G/R^*$ and hence $G/R$ is simply connected.  This completes the proof of the
lemma.
\end{proof}

\subsection{Arithmetic          Groups          and         Congruence
Subgroups} \label{congruence}

In the introduction, we defined the notion of arithmetic and congruence subgroup of $G(\Q)$ using the adelic language. One can define the notion of arithmetic (res. congruence) group in more concrete terms as follows. Given a linear  algebraic group $G\subset \SL_n$ defined  over $\Q$, we
will say that a subgroup  $\Gamma \subset G(\Q)$ is an {\it arithmetic
group}  if is commensurable to $G\cap \SL_n(\bbz)=G(\bbz)$; that is, the
intersection $\Gamma  \cap G(\bbz)$ has finite index  both in $\Gamma$
and in  $G(\bbz)$. It is well  known that the notion  of an arithmetic
group  does not  depend on  the specific  linear  embedding $G\subset
\SL_n$. As in \cite{Ser}, we may define the {\it arithmetic completion}
$\widehat{G}$ of $G(\Q)$  as the completion of the  group $G(\Q)$ with
respect to the topology on $G(\Q)$ as a topological group, obtained by
designating   arithmetic   groups   as   a  fundamental   systems   of
neighbourhoods of identity in $G(\Q)$.

Given $G\subset \SL_n$ as in  the preceding paragraph, we will say that
an  arithmetic  group $\Gamma  \subset  G(\Q)$  is  a {\it  congruence
subgroup} if  there exists an integer  $m \geq 2$ such  that $\Gamma $
contains  the ``principal congruence  subgroup'' $G(m\bbz)=\SL_n(m\bbz)\cap
G$  where  $\SL_n(m\bbz)$  is  the   kernel  to  the  residue  class  map
$\SL_n(\bbz)\ra \SL_n(\bbz/m\bbz)$. We then get the structure of a topological
group  on the  group $G(\Q)$  by designating  congruence  subgroups of
$G(\Q)$  as a fundamental  system of  neighbourhoods of  identity. The
completion  of  $G(\Q)$ with  respect  to  this  topology, is  denoted
$\overline{G}$. Again,  the notion of  a congruence subgroup  does not
depend on the specific linear embedding $G\ra \SL_n$ .

Since every congruence subgroup is an arithmetic group, there exists a
map from $\pi: \widehat{G}\ra \overline{G}$ which is easily seen to be
surjective,  and the  kernel $C(G)$  of $\pi$  is a  compact profinite
subgroup  of  $\widehat{G}$.   This  is  called  the  {\it  congruence
subgroup  kernel}.  One  says  that $G(\Q)$  has  the {\it  congruence
subgroup property}  if $C(G)$  is trivial. This  is easily seen  to be
equivalent to the statement  that every arithmetic subgroup of $G(\Q)$
is a congruence subgroup.

It is  known (see  p.~108, last but  one paragraph of
\cite{Ra2} or  \cite{Ch}) that  solvable groups  $G$ have  the  congruence subgroup
property.

Moreover, every solvable subgroup of $\GL_n(\bbz)$ is polycyclic.  In such a group, every subgroup is intersection of finite index subgroups.  So every solvable subgroup of an arithmetic group is congruence closed.
We will use these facts  frequently in the sequel.

 Another (equivalent) way of  viewing the congruence completion is (see
\cite{Ser}, p.~276, Remarque) as follows: let $\bba _f$ be the ring of
finite adeles  over $\Q$, equipped  with the standard  adelic topology
and let $\bbz _f \subset \bba  _f$ be the closure of $\bbz$.  Then the
group $G({\mathbb A} _f)$ is also a locally compact group and contains
the  group  $G(\Q)$.   The  congruence  completion  $\overline{G}$  of
$G(\Q)$ may be  viewed as the closure of $G(\Q)$  in  $G({\mathbb A} _f)$.

\begin{lemma}   \label{surjectivemorphisms}  Let  $H,H^*$   be  linear
algebraic groups defined over $\Q$.
\begin{enumerate}[(i)]
\item Suppose  $H^* \ra  H$ is  a surjective  $\Q$-morphism.  Let
$(\rho, W _{\Q})$ be  a representation of $H$ defined  over $\Q$.  Then
there exists a faithful  $\Q$-representation $(\tau, V_{\Q})$ of $H^*$
such that $(\rho,W)$ is a sub-representation of $(\tau, V)$.

\item If  $H^*\ra H$ is a  surjective map defined over  $\Q$ , then
the  image of  an arithmetic  subgroup of $H^*$  under the  map $H^*\ra  H$ is  an
arithmetic subgroup of $H$.

\item If $H$ is connected, then there exists a connected essentially simply
connected algebraic group $H^*$ with a surjective $\bbq$-defined homomorphism
$H^*\to H$ with finite kernel.

\item If  $H^*\ra H$ is  a surjective homomorphism of  algebraic $\Q$-groups
which are essentially simply
connected, then the  image of a congruence subgroup  of $H^*(\Q)$ is a
congruence subgroup of $H(\Q)$.

\end{enumerate}

\end{lemma}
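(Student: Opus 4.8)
The plan is to reduce to the known case of simply connected semisimple groups via the structure of essentially simply connected groups, using parts (ii) and (iii) of this very lemma together with the already-cited fact that solvable groups have the congruence subgroup property. First I would treat the semisimple simply connected case as the engine: if $H^* \to H$ is surjective with both groups semisimple and simply connected, then the kernel is a finite central subgroup, and surjectivity on $\Q$-points of congruence completions follows from strong approximation (or, equivalently, from the explicit description of the congruence completion as the closure of $G(\Q)$ inside $G(\mathbb{A}_f)$, combined with the fact that $H^*(\mathbb{A}_f) \to H(\mathbb{A}_f)$ is an open map with the image of the maximal compact $\prod_p H^*(\bbz_p)$ landing inside a congruence neighbourhood in $H$). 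Concretely, a principal congruence subgroup of $H^*$ maps into a congruence subgroup of $H$, so images of congruence subgroups are sandwiched between congruence subgroups, hence congruence.

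Next I would handle the general essentially simply connected situation by decomposing along the radical. Write $R^*, R$ for the radicals of $H^*, H$; since $H^* \to H$ is surjective, $R^*$ maps onto $R$, inducing a surjection $H^*_{ss} = H^{*0}/R^* \to H_{ss} = H^0/R$ of semisimple simply connected groups (this uses the essentially-simply-connected hypothesis on both sides, plus the argument already used in Lemma \ref{essentiallysimplyconnected} that the image of a semisimple group under a $\Q$-morphism is semisimple). Given a congruence subgroup $\Gamma^*$ of $H^*(\Q)$, its image $\Gamma = \varphi(\Gamma^*)$ is an arithmetic subgroup of $H(\Q)$ by part (ii). To show $\Gamma$ is actually congruence, I would show that the congruence closure $\overline{\Gamma}$ equals $\Gamma$ by comparing images in $H_{ss}$ and in $R$ (or rather in a product that dominates $H$). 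The image of $\Gamma^*$ in $H^*_{ss}$ is a congruence subgroup there — because $H^{*0} \to H^*_{ss}$ is surjective between essentially simply connected groups with $H^*_{ss}$ even simply connected, one can invoke the semisimple case — and it maps onto a congruence subgroup of $H_{ss}$ by the engine above; meanwhile the solvable part is controlled because solvable groups have CSP, so any arithmetic subgroup of $R(\Q)$ is automatically congruence. Assembling: an element of $\overline{\Gamma}$ has congruence-closed image in both the semisimple quotient and the solvable part of a suitable cover, forcing it into $\Gamma$ up to finite-index bookkeeping.

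The main obstacle I expect is the \emph{gluing} step: the extension $1 \to R \to H^0 \to H_{ss} \to 1$ need not split, and congruence subgroups of $H$ are not simply products of congruence subgroups of $R$ and $H_{ss}$, so one cannot naively reconstruct $\Gamma$ from its two projections. The clean way around this is the adelic viewpoint already set up in the paper: work inside $H(\mathbb{A}_f)$, use that $\varphi: H^*(\mathbb{A}_f) \to H(\mathbb{A}_f)$ is continuous and that strong approximation holds in the simply connected semisimple quotient, so that the closure of $\varphi(H^*(\Q))$ — which contains $\overline{\varphi(\Gamma^*)}$ — is computed directly, and the finite kernel only contributes finitely much, which is absorbed because finite-index subgroups of the relevant solvable and (by CSP) semisimple pieces are congruence. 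I would also need the small technical point, used implicitly, that passing to $H^{*0}$ and $\Gamma^{*0}$ (finite-index subgroups, by the conventions fixed in the introduction) does not affect whether images are congruence closed, since congruence closedness is insensitive to finite index. With these reductions in place the verification is routine.
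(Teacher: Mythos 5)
Your two-step strategy (treat the simply connected semisimple case, then glue in the solvable part adelically) has the right shape, but there is a genuine gap at the gluing step, which you yourself flag as ``the main obstacle.'' The paper dissolves the obstacle with two reductions that your sketch does not make, and without them the step you call ``finite-index bookkeeping'' is not routine --- it is essentially the congruence subgroup problem you are trying to avoid. First, the paper passes to the Levi decomposition: $H^{0}$ is a semidirect product of its \emph{unipotent} radical $U$ and the reductive quotient $H^{0}/U$, and $U$ has CSP, so one may assume both groups are reductive; your sketch works only with the solvable radical $R$ and never makes this reduction. Second, and decisively, an essentially simply connected \emph{reductive} group is an honest \emph{direct} product $S\times H_{ss}$ of a torus and a simply connected semisimple group (if $Z(G)^{0}\cap[G,G]$ were nontrivial, the quotient by the radical would be a proper isogenous quotient of $[G,G]$ and hence not simply connected). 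This removes the non-split extension $1\to R\to H^{0}\to H_{ss}\to 1$ that worries you: there is no extension left to glue. The paper then only needs the short adelic check that a product $\Gamma_{1}\Gamma_{2}$ of congruence subgroups of the two factors is a congruence subgroup of $H$, which is straightforward for a direct product but would not be for a general extension. Without the direct-product structure, ``strong approximation in the simply connected quotient plus CSP for solvable groups'' does not visibly reassemble into a congruence subgroup of $H$ itself.

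Two smaller points. In the simply connected semisimple case, the kernel of a surjection $H^{*}_{ss}\to H_{ss}$ need not be a finite central subgroup: since both sides are simply connected, the map is, up to isomorphism, a projection onto a subproduct of the $\bbq$-simple factors of $H^{*}_{ss}$, with (possibly infinite) kernel a complementary subproduct. This is what the paper proves and uses, and it is actually simpler than the isogeny case, not harder, because projections are open and surjective on adelic points. Finally, your claim that ``the image of $\Gamma^{*}$ in $H^{*}_{ss}$ is a congruence subgroup --- one can invoke the semisimple case'' applies the statement of (iv) to the map $H^{*0}\to H^{*}_{ss}$, whose source is not semisimple; making this non-circular already requires the Levi and direct-product reductions above, so they cannot be bypassed.
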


\begin{proof} Let $\theta: H^*\ra  \GL(E)$ be a faithful representation
of the  linear algebraic group $H^*$  defined over $\Q$  and  $\tau =\rho \oplus \theta$ as $H^*$-representation.  Clearly $\tau$ is faithful for
$H^*$ and contains $\rho$. This proves (i).

Part (ii) is the statement of Theorem (4.1) of \cite{Pl-Ra}.

We now prove (iii).    Write $H=R G$ as a
product of  its radical $R$  and a semi-simple group  $G$.  Let
$H^*_{ss}\ra G$ be the  simply connected cover of $G$. Hence
$H^*_{ss}$ acts on $R$ through  $G$, via this covering map. Define
$H^*=R\rtimes H  ^*_{ss}$ as a semi-direct product.   Clearly, the map
$H^*\ra H$ has finite kernel and satisfies the properties of (iii).

To prove (iv), we may assume that $H$ and $H^*$ are connected.
If  $U^*,U$  are  the  unipotent radicals  of  $H^*$  and  $H$,  the
assumptions of (iv) do not change for the quotient groups $H^*/U^*$ and
$H/U$. Moreover, since  $H^*$ is the semi-direct product  of $U^*$ and
$H^*/U^*$ (and  similarly for $H,U$) and  the unipotent $\Q$-algebraic
group $U$ has the congruence subgroup property, it suffices to prove
(iv)  when both $H^*$ and $H$ are reductive.  By assumption, $H^*$ and $ H$ are
essentially  simply  connected;   i.e.  $H^*=H^*_{ss}\times  S^*$  and
$H=H_{ss}\times S$  where $S,S^*$  are tori and  $H^*_{ss},H_{ss}$ are
simply connected semi-simple groups.  Thus we have connected reductive
$\Q$-groups  $H^*,H$ with  a surjective  map such  that  their derived
groups are simply connected  (and semi-simple), and the abelianization
$(H^*)^{ab}$ is a torus (similarly for $H$).

Now, $[H^*,H^*]=H^*_{ss}$ is a  simply connected semi-simple group and
hence it is a product $F_1\times \cdots\times F_s$ of simply connected
$\Q$-simple    algebraic    groups $F_i$.     Being   a    factor    of
$[H^*,H^*]=H^*_{ss}$,  the  group $[H,H]=H_{ss}$  is  a  product of  a
(smaller)  number  of  these  $F_i$'s.   After a  renumbering  of  the
indices, we  may assume that  $H_{ss}$ is a product  $F_1\times \cdots
\times F_r$ for some $r\leq s$  and the map $\pi$ on $H^*_{ss}$ is the
projection to the  first $r$ factors. Hence the  image of a congruence
subgroup of $H^*_{ss}$ is a congruence subgroup of $H_{ss}$.

The tori $S^*,S$ have the  congruence subgroup property by a result of
Chevalley (as already stated at the beginning of this section, this is
true  for  all  solvable  algebraic  groups).  Hence  the  image  of  a
congruence subgroup of $S^*$ is a congruence subgroup of $S$.   We thus need  only prove  that every  subgroup of  the reductive
group $H$  of the form  $\Gamma _1\Gamma _2$, where  $\Gamma _1\subset
H_{ss}$ and  $\Gamma _2\subset  S$ are congruence  subgroups,  is
itself a congruence  subgroup of $H$. We use  the adelic form of
the   congruence topology. Suppose  $K$ is  a  compact  open  subgroup of  the
$H(\bba_f)$ where $\bba _f$ is the ring of finite adeles. The image of
$H(\bbq)\cap K$ under the quotient map $H\ra H^{ab}=S$ is a congruence
subgroup  in the  torus $S$  and hence  $H(\bbq)\cap  K' \subset
(H_{ss}(\bbq )\cap K) (S(\bbq )\cap K)$ for some possibly smaller open
subgroup $K'\subset H(\bba_f)$. This proves (iv). \end{proof}

Note that part (iii) and (iv) prove Proposition \ref{arithmeticchevalley}(ii).

\section{The Arithmetic Chevalley Theorem}%2

In this section, we prove Proposition \ref{arithmeticchevalley}(i). Assume that $\vp: G_1
\ra G_2$ is a surjective  morphism of $\Q$-algebraic groups. We  are to prove  that $\vp (\Ga^0)$ contains  the commutator
subgroup of  a congruence subgroup  of $G_2(\Q)$ containing it.

Before starting on  the proof, let us note that  in general, the image
of  a congruence  subgroup  of $G_1(\Z)$  under  $\vp$ need  not be  a
congruence  subgroup of  $G_2(\Z)$. The following proposition gives a fairly general
situation when this happens.

\begin{prop} \label{congruence image} Let $\pi : G_1\ra G_2$ be a finite covering of semi-simple algebraic groups defined over $\Q$ with $G_1$ simply connected and $G_2$ not.
Assume $G_1 (\bbq)$ is dense in $G_1(\bba_f)$. Write $K$ for the kernel of $\pi$ and  $K_f$ for the kernel of the map
$G_1(\bba _f)\ra G_2(\bba _f)$.  Let $\Ga $ be a congruence subgroup of $G_1(\Q)$ and $H$ its closure in $G_1(\bba _f)$. Then the image $\pi (\Ga) \subset G_2(\Q)$ is a congruence subgroup if and only if $K H\supset K_f$ .

\end{prop}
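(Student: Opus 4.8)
The plan is to work entirely inside the locally compact group $G_1(\bba_f)$ and translate the statement ``$\pi(\Ga)$ is a congruence subgroup of $G_2(\Q)$'' into a statement about closures. First I would record the basic setup: by hypothesis $G_1(\Q)$ is dense in $G_1(\bba_f)$, and the map $\pi: G_1(\bba_f)\ra G_2(\bba_f)$ is a continuous open homomorphism with kernel $K_f$ (a finite group, since $K$ is finite and central, but it sits diagonally in $\prod_p K(\Q_p)$, so $K_f$ can be strictly larger than the image of $K=K(\Q)$ under the diagonal embedding). Since $G_1$ is simply connected, $\pi$ is surjective on $\bba_f$-points as well (no Galois cohomology obstruction locally, and the adelic statement follows). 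The closure of $\pi(\Ga)$ in $G_2(\bba_f)$ is then exactly $\pi(\overline{\Ga}) = \pi(H)$, where $H = \overline{\Ga}$ is a compact open subgroup of $G_1(\bba_f)$ (compact open because $\Ga$ is a congruence subgroup and $G_1(\Q)$ is dense).

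Next I would use the adelic criterion stated in Section~\ref{congruence}: the closure $\overline{\pi(\Ga)}$ of $\pi(\Ga)$ in $G_2(\bba_f)$ is congruence-completing $\pi(\Ga)$, and $\pi(\Ga)$ is a congruence subgroup precisely when $\pi(\Ga) = \overline{\pi(\Ga)} \cap G_2(\Q)$, i.e. when $\pi(\Ga)$ is already closed in the congruence topology on $G_2(\Q)$. Now $\overline{\pi(\Ga)} = \pi(H)$, so we must compare $\pi(\Ga)$ with $\pi(H) \cap G_2(\Q)$. An element of $G_2(\Q)$ lies in $\pi(H)$ iff it has a preimage in $H$ under $\pi: G_1(\bba_f) \to G_2(\bba_f)$; since $G_1(\Q)$ is dense and $H$ is open, and since $\pi$ is surjective on $\Q$-points (again by simple connectedness of $G_1$), such a $\Q$-preimage can be chosen in $G_1(\Q)$. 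So $g_2 \in \pi(H)\cap G_2(\Q)$ iff there is $g_1 \in G_1(\Q)$ with $\pi(g_1) = g_2$ and $g_1 \in K_f \cdot H$. Thus $\pi(H)\cap G_2(\Q) = \pi\big( (K_f\cdot H)\cap G_1(\Q)\big)$, and the question becomes whether $(K_f\cdot H)\cap G_1(\Q) = K\cdot \Ga$ (which would give $\pi$ of it equal to $\pi(\Ga)$, as $K = \ker\pi$ at the level of $\Q$-points intersect... more precisely $\pi^{-1}(\pi(\Ga))\cap G_1(\Q) = K\cdot\Ga$). Since $H\cap G_1(\Q) = \Ga$, we have $(K_f H)\cap G_1(\Q) \supseteq K\Ga$ always, with equality iff $K_f H \cap G_1(\Q)$ contains no extra cosets, and I would show that the potential extra cosets are controlled exactly by whether $K_f \subseteq K H$: if $K_f \subseteq KH$ then $K_f H = KH$ and $(KH)\cap G_1(\Q) = K(H\cap G_1(\Q)) = K\Ga$ (using $K \subseteq G_1(\Q)$), so $\pi(\Ga)$ is congruence-closed; conversely if $K_f \not\subseteq KH$, pick $k \in K_f \setminus KH$, and using density of $G_1(\Q)$ in the open set $kH$ produce $g_1 \in G_1(\Q)\cap kH$; then $g_1 \in (K_f H)\cap G_1(\Q)$ but $g_1 \notin K\Ga = (KH)\cap G_1(\Q)$, so $\pi(g_1) \in \overline{\pi(\Ga)}\cap G_2(\Q) \setminus \pi(\Ga)$, witnessing that $\pi(\Ga)$ is not congruence-closed, hence not a congruence subgroup.

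The main obstacle I expect is the bookkeeping around the distinction between $K$ (the $\Q$-rational kernel, sitting inside $G_1(\Q)$) and $K_f$ (the full adelic kernel): one must be careful that $\pi(\Ga)$ being a \emph{congruence} subgroup is about closedness in $G_2(\Q)$, not just about $\pi(H)$ being open (it always is), and that the comparison $(K_f H)\cap G_1(\Q)$ versus $K\Ga$ is exactly what measures the congruence-closedness. A secondary technical point is the surjectivity of $\pi$ on $\Q$-points, which I would justify by the vanishing of $H^1(\Q, K)$-type obstructions — or more elementarily, by lifting along the density hypothesis $G_1(\Q)$ dense in $G_1(\bba_f)$ together with $\pi$ being open — so that every element of $G_2(\Q)$ whose fibre meets $H$ has a $\Q$-point in that fibre. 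Once these are pinned down the equivalence $\pi(\Ga)$ congruence $\iff$ $K_f \subseteq K H$ falls out of the coset computation above. I would also remark that both conditions are equivalent to $K_f \cap H$ having the ``right'' index, giving a cleaner restatement, but the coset argument is the cleanest route to the stated form.
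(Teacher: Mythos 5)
Your proof is a genuinely different route from the paper's. The paper introduces a \emph{quasi-congruence topology} on $G_3=\pi(G_1(\Q))$ (open subgroups $=$ images of congruence subgroups of $G_1(\Q)$), identifies its completion with $\overline{G_1}/K$, compares that with the congruence completion $\overline{G_1}/K_f$, and reads off $\D_2^*=\D_1^*K_f/K$, whence $\D_1=\D_2$ iff $K\D_1^*\supset K_f$ iff $KH\supset K_f$. You instead work directly with closures in $G_1(\bba_f)$ and $G_2(\bba_f)$ and do an explicit coset computation; that is a legitimate alternative framing of the same comparison, and your ``only if'' argument (pick $k\in K_f\setminus KH$, use density of $G_1(\Q)$ in the open coset $kH$ to produce a rational point of $\pi(H)$ outside $\pi(\Ga)$, noting $K_f\cap G_1(\Q)=K(\Q)$) is correct and needs none of the problematic assertions below.

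However, there is a genuine error in the ``if'' direction. You assert, twice, that $\pi$ is surjective on $\bba_f$-points and on $\Q$-points ``by simple connectedness of $G_1$'' and ``no Galois cohomology obstruction locally.'' This is false: for $\SL_n\ra\PGL_n$ the cokernel of $\SL_n(\Q_p)\ra\PGL_n(\Q_p)$ is $\Q_p^\times/(\Q_p^\times)^n\neq 1$, and in general the cokernel on $k$-points is controlled by $H^1(k,K)$, which does \emph{not} vanish; simple connectedness only gives surjectivity on $\bar\Q$-points. The nonvanishing of $H^1(\Q_p,K)$ is precisely the reason $K_f$ is much larger than $K$, which is the engine of the whole proposition — if $\pi$ really were surjective on $\Q_p$-points there would be no phenomenon to explain. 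Concretely, the false surjectivity is used to pass from ``$g_2\in\pi(H)\cap G_2(\Q)$'' to ``$g_2$ has a preimage in $G_1(\Q)\cap K_fH$,'' i.e.\ to the identity $\pi(H)\cap G_2(\Q)=\pi\bigl((K_fH)\cap G_1(\Q)\bigr)$. Without surjectivity you only get the inclusion $\supset$; what you can honestly prove is $\pi(H)\cap G_3=\pi\bigl((K_fH)\cap G_1(\Q)\bigr)$, where $G_3=\pi(G_1(\Q))$. This is exactly the adjustment the paper makes by working with congruence closures inside $G_3$ from the start, so that every element in sight has a rational $G_1$-preimage by fiat. To repair your argument, replace $G_2(\Q)$ by $G_3$ throughout the forward direction (and drop the two surjectivity claims, which are both wrong and unnecessary once you do).
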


Before proving the proposition, let us note that while $K$ is finite, the group $K_f$  is a product of infinitely many finite abelian groups and that $K_f$ is central in $\overline{G_1}$. This implies

\begin{corr} \begin{enumerate} [(i)]

\item There are infinitely many congruence subgroups $\Ga _i$ with $\pi (\Ga _i)$  non-congruence subgroups of unbounded finite index in their  congruence closures $  \overline{\Ga _i}$.

\item For each of these $\Ga = \Ga_i$, the image $\pi (\Ga)$ contains the commutator subgroup $[\overline{\Ga},\overline{\Ga}]$, and is normal in $\overline{\Ga}$ (with abelian quotient).

\end{enumerate}

\end{corr}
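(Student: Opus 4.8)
The plan is to obtain part (ii) directly from Proposition~\ref{arithmeticchevalley}(i), and to prove part (i) by an explicit construction of the $\Ga_i$, using Proposition~\ref{congruence image} to force non-congruence and the centrality of $K_f$ to compute the index. Throughout I write $K=K(\bbq)$ for the (finite) diagonally embedded kernel, and recall that $\overline{\Ga_i}$ in the statement abbreviates the congruence closure of $\pi(\Ga_i)$ in $G_2(\bbq)$, which, $\Ga_i$ being dense in its closure $H_i\subset G_1(\bba_f)$, equals $\pi(H_i)\cap G_2(\bbq)$.

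\emph{Construction for (i).} Write $K_f=\prod_p K(\bbq_p)$ (restricted direct product with respect to the $K(\bbz_p)$, which equal $K(\bbq_p)$ for all but finitely many $p$); this is a product of infinitely many \emph{nontrivial} finite abelian groups, central in $G_1(\bba_f)=\overline{G_1}$. Since $K(\bbq)$ injects into each $K(\bbq_p)$, its diagonal image in $\prod_{p\in S}K(\bbq_p)$ has order $|K(\bbq)|$ for every finite set $S$ of primes. I would fix finite sets $S_1\subsetneq S_2\subsetneq\cdots$, each consisting of primes with $K(\bbq_p)\neq 1$ and containing the finitely many primes of bad reduction of $K$, so that $\prod_{p\in S_i}|K(\bbq_p)|\to\infty$; and I would take $H_i=\prod_p H_{i,p}$ with $H_{i,p}=G_1(\bbz_p)$ for $p\notin S_i$ and, for $p\in S_i$, $H_{i,p}$ a compact open subgroup of $G_1(\bbq_p)$ small enough that $H_{i,p}\cap K(\bbq_p)=\{1\}$. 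Then $\Ga_i:=G_1(\bbq)\cap H_i$ is a congruence subgroup whose closure in $G_1(\bba_f)$ is $H_i$, since $G_1(\bbq)$ is dense and $H_i$ open. One computes $K_f\cap H_i=\prod_{p\notin S_i}K(\bbq_p)$ and $KH_i\cap K_f=K(K_f\cap H_i)$, so $K_f/(KH_i\cap K_f)\cong(\prod_{p\in S_i}K(\bbq_p))/K$ is nontrivial as soon as $\prod_{p\in S_i}|K(\bbq_p)|>|K(\bbq)|$; by Proposition~\ref{congruence image}, $\pi(\Ga_i)$ is then not a congruence subgroup of $G_2(\bbq)$.

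\emph{The index, and (ii).} Put $\widetilde\Ga_i:=G_1(\bbq)\cap K_fH_i$, so $\Ga_i\le\widetilde\Ga_i$ and $\pi(\widetilde\Ga_i)\subseteq\pi(H_i)\cap G_2(\bbq)=\overline{\pi(\Ga_i)}$. Centrality of $K_f$ makes $\gamma=\mu h\mapsto\mu$ a well-defined homomorphism $\widetilde\Ga_i\to K_f/(K_f\cap H_i)$ with kernel $\Ga_i$, and it is onto by strong approximation ($G_1(\bbq)H_i=G_1(\bba_f)$); passing to $\pi$-images and dividing by the image of $K(\bbq)$ gives $[\pi(\widetilde\Ga_i):\pi(\Ga_i)]=\prod_{p\in S_i}|K(\bbq_p)|/|K(\bbq)|$, hence $[\overline{\pi(\Ga_i)}:\pi(\Ga_i)]\to\infty$. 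Each such index is finite because $\overline{\pi(\Ga_i)}$ is a congruence subgroup and $\pi(\Ga_i)$ is the image of an arithmetic group (Lemma~\ref{surjectivemorphisms}(ii)), so the two are commensurable arithmetic subgroups of $G_2(\bbq)$ (finiteness is in any case part of Proposition~\ref{arithmeticchevalley}(i)). As the indices are unbounded the $\Ga_i$ are infinitely many distinct congruence subgroups, giving (i). For (ii), each $\Ga_i$ is a congruence subgroup of the connected group $G_1$, so its Zariski closure is $G_1$ and $\Ga_i^0=\Ga_i$; applying Proposition~\ref{arithmeticchevalley}(i) to $\pi\colon G_1\to G_2$ and the congruence-closed subgroup $\Ga_i$ gives at once that $\pi(\Ga_i)$ is normal in its congruence closure $\overline{\pi(\Ga_i)}$ with finite abelian quotient, i.e.\ $[\overline{\pi(\Ga_i)},\overline{\pi(\Ga_i)}]\subseteq\pi(\Ga_i)$.

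The hard part is the index assertion in (i): one wants it simultaneously finite (which is soft — commensurability of arithmetic groups, or Proposition~\ref{arithmeticchevalley}(i)) and unbounded, and the latter rests on the two computations flagged above — that $K_f\cap H_i$ is exactly $\prod_{p\notin S_i}K(\bbq_p)$ (this is where the smallness of $H_{i,p}$ at $p\in S_i$ and the good reduction of $K$ off $S_i$ enter) and that the displayed homomorphism onto $K_f/(K_f\cap H_i)$ really is surjective. Everything else is bookkeeping with the central subgroup $K_f$ and the dictionary between congruence subgroups and compact open subgroups of $G_1(\bba_f)$.
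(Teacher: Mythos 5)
Your argument is correct, and it is a genuinely different route from the paper's. The paper derives the corollary as a near-immediate byproduct of its proof of Proposition~\ref{congruence image}: it introduces the ``quasi-congruence'' topology on $G_3=\pi(G_1(\Q))$, identifies the quasi-congruence completion with $\overline{G_1}/K$ and the congruence completion with $\overline{G_1}/K_f$, and reads off $\Delta_2^*=\Delta_1^*\cdot(K_f/K)$; normality and abelianness of $\Delta_2/\Delta_1$ (your part (ii)) then fall out of the centrality of $K_f$ at the completion level, and for part (i) the paper simply says ``This implies'' after observing that $K_f$ is an infinite restricted product of nontrivial finite abelian groups. You, by contrast, construct the $\Gamma_i$ explicitly by shrinking the local components of $H_i$ at finite sets $S_i$ of primes so that $H_{i,p}\cap K(\bbq_p)=1$, then use the modular law and centrality of $K_f$ to compute $[\overline{\pi(\Gamma_i)}:\pi(\Gamma_i)]$ as a lower bound $\prod_{p\in S_i}|K(\bbq_p)|/|K(\bbq)|$ --- this is real work that the paper elides, and it is a useful fleshing-out. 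Your finiteness argument correctly falls back on Proposition~\ref{arithmeticchevalley}(i) (your parenthetical is the right move, since $\pi(H_i)$ need not be \emph{open} in $G_2(\bba_f)$ --- for instance $\mathrm{SL}_2(\bbz_p)\to\mathrm{PGL}_2(\bbz_p)$ has index-$2$ image for $p$ odd --- so $\overline{\pi(\Gamma_i)}$ is a priori only congruence closed, not necessarily a congruence subgroup). The one stylistic caveat is that for part (ii) you invoke Proposition~\ref{arithmeticchevalley}(i), which in the paper's flow is proved \emph{after} this corollary (indeed its proof quotes Proposition~\ref{congruence image}); this is not circular, since the corollary is never used to establish \ref{arithmeticchevalley}(i), but the paper instead extracts normality and abelianness directly from $\Delta_2^*=\Delta_1^*K_f/K$ and centrality of $K_f$, keeping the corollary self-contained within the proof of Proposition~\ref{congruence image}.
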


We now prove Proposition \ref{congruence image}.

\begin{proof} Let $G_3$ be the image of the rational points of $G_1(\Q)$:
\[G_3=\pi (G_1(\Q))\subset G_2(\Q).\] Define a subgroup $\D$ of $G_3$  to be a {\it quasi-congruence subgroup} if the inverse image $\pi ^{-1}(\D)$ is a congruence subgroup of $G_1(\Q)$. Note that the quasi-congruence subgroups of $G_3 $ are exactly the images of congruence subgroups of $G_1(\Q)$ by $\pi$. It is routine to check that by declaring quasi-congruence subgroups to be open,  we get the structure of a topological group on $G_3$ . This topology is  weaker or equal to the arithmetic topology on $G_3$ . However, it is strictly stronger than the congruence topology on $G_3$. The last assertion follows from the fact that the completion of $G_3=G_1(\Q)/K(\Q)$ is the quotient $\overline{G_1}/K$ where $\overline{G_1}$ is the congruence completion of $G_1 (\Q)$, whereas the completion of $G_3$ with respect to the congruence topology is $\overline{G_1}/K_f$.

Now let $\Ga \subset G_1(\Q)$ be a congruence subgroup and $\D_1=\pi (\Ga)$; let $\D_2$ be its congruence closure in $G_3$. Then both $\D_1$ and $\D_2$ are open in the quasi-congruence topology on $G_3$. Denote by $G_3 ^*$ the completion of $G_3$ with respect to the quasi-congruence topology, so $G^*_3 = \overline{G_1}/K$ and denote by $\D_1^*,\D_2^*$ the closures of $\D_1,\D_2$ in $G_3^*$. We then have the equalities
\[\D_2/\D_1= \D_2^*/\D_1^*, \quad \D_2^*= \D^*_1 K_f/K. \]

 Hence $\D_1 ^*=\D_2 ^*$ if and only if $K\D_1^*\supset K_f$. This proves  Proposition \ref{congruence image}.

The proof shows that $\D_1^*$ is normal in $\D_2^*$ (since $ K_f$ is central) with abelian quotient. The same is true for $\D_1$ in $\D_2$ and  the corollary is also proved.
\end{proof}

To continue with the proof of Proposition \ref{arithmeticchevalley}, assume, as we may
(by replacing $G_1$ with the Zariski closure of $\Ga$), that $G_1$ has
no  characters defined  over  $\Q$.  For, suppose  that  $G_1$ is  the
Zariski closure of $\Ga \subset  G_1(\Z)$. Let $\chi :G_1 \ra {\mathbb
G}_m$ be a non-trivial (and therefore surjective) homomorphism defined
over  $\Q$;  then the  image  of  the  arithmetic group  $G_1(\Z)$  in
${\mathbb G}_m(\Q)$ is a Zariski dense arithmetic group.  However, the
only arithmetic groups in ${\mathbb G}_m(\Q)$ are finite and cannot be
Zariski  dense  in ${\mathbb  G}_m$.   Therefore,  $\chi  $ cannot  be
non-trivial. We can also assume that $G_1$ is connected.

We start by proving Proposition 0.1 for the case that $\Ga $ is a congruence subgroup.

If we write $G_1=R _1H_1$ where  $H_1$ is semi-simple and $R_1$ is the
radical,  we  may  assume  that  $G_1$ is essentially   simply  connected (Lemma \ref{surjectivemorphisms}(iii)),  without
affecting   the   hypotheses   or   the  conclusion   of   Proposition
\ref{arithmeticchevalley}.

Hence $G_1 = R_1 \rtimes H_1$ is a semi direct product.  Then clearly, every congruence subgroup of $G_1 $ contains a congruence subgroup of the form $\D \rtimes \Phi$ where $\D \subset R_1$ and $\Phi \subset H_1$ are congruence subgroups.  Similarly, write $G_2=R_2H_2$. Since $\varphi $ is easily seen
to map  $R_1$ onto $R_2$ and $H_1$  onto $H_2$, it is  enough to prove
the proposition for $R_1$ and $H_1$ separately.

We first recall  that if $G$ is a solvable linear algebraic group defined
over  $\Q$ then  the congruence  subgroup property  holds  for $G$,
i.e., every arithmetic subgroup of $G$ is a congruence subgroup (for a
reference    see   p.~108,   last    but   one    paragraph   of
\cite{Ra2} or \cite{Ch}).  Consequently, by  Lemma \ref{surjectivemorphisms} (ii),
the image of a congruence subgroup  in $R_1$ is an arithmetic group in
$R_2$ and hence a congruence subgroup. Thus we dispose of the solvable
case.

In the  case of  semi-simple groups, denote  by $H_2^*$ by  the simply
connected cover of  $H_2$. The map $\vp : H_1 \ra H_2$ lifts to a map from
$H_1$  to   $H_2^*$.  For  simply  connected   semi-simple  groups,  a
surjective map from  $H_1$ to $H_2^*$ sends a congruence subgroup to a congruence subgroup by Lemma 1.3 (iv).

We are thus reduced to the situation $H_1=H_2^*$ and $\vp: H_1\ra H_2$
is the simply connected cover of  $H_2$.

By our assumptions, $H_1$ is now connected, simply connected and semisimple.  We claim that for any non-trivial $\bbq$-simple factor $L$ of $H_1$, $L(\bbr)$ is not compact.  Otherwise, the image  of $\Gamma$, the arithmetic group, there is finite and as $\Ga$ is Zariski dense, so $H_1$ is not connected.  The strong approximation theorem (\cite[Theorem 7.12]{Pl-Ra}) gives now that $H_1(\bbq)$ is dense in $H_1(\bba_f)$.  So Proposition 2.1 can be applied to finish the proof of Proposition 0.1 in the case $\Ga$ is a congruence subgroup.

 We need to show that it is true also for the more general case when $\Ga$ is only congruence closed.  To this end let us formulate the following Proposition which is of independent interest.

\begin{prop}\label{pr2.3}  Let $\Ga \subseteq \GL_n(\bbz), G$ its Zariski closure and $Der = [G^0, G^0]$.  Then $\Ga$ is congruence closed if and only if $\Ga \cap Der$ is  a congruence subgroup of $Der$.
\end{prop}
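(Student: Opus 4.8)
The plan is to prove both implications by reducing to facts already established: the congruence subgroup property for solvable groups, Proposition \ref{arithmeticchevalley}(i) applied to the natural projection $G^0 \to G^0/Der$, and the fact (noted in the preliminaries) that every polycyclic, hence every solvable, subgroup of an arithmetic group is automatically congruence closed. Throughout I replace $G$ by $G^0$ and $\Ga$ by $\Ga^0$, which is legitimate since $\Ga^0$ has finite index in $\Ga$ and both the hypothesis ``$\Ga$ congruence closed'' and the conclusion ``$\Ga \cap Der$ is a congruence subgroup of $Der$'' are insensitive to passing between a group and a finite-index overgroup inside the arithmetic group $G(\bbz)$ (a finite-index subgroup of a congruence subgroup is congruence closed, and a group whose finite-index subgroup is a congruence subgroup of $Der$ is itself one, since $Der$ is semisimple). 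So assume $G$ is connected with $Der = [G,G]$ and radical $R = R(G)$, so that $G/R$ is semisimple, and by the same device as in the proof of Proposition \ref{arithmeticchevalley}(i) we may assume $G$ has no $\bbq$-characters, so that $G/Der$ is a torus (an anisotropic-up-to-finite quotient; in any case a group with CSP).

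For the easy direction, suppose $\Ga \cap Der =: \Delta$ is a congruence subgroup of $Der$. I want to show $\Ga$ is congruence closed, i.e. $\overline{\Ga} \cap G(\bbq) = \Ga$ where $\overline{\Ga}$ is the congruence closure. The image $\ol\Ga$ of $\Ga$ under $p: G \to G/Der$ is a subgroup of the arithmetic group $p(G(\bbz)) \subset (G/Der)(\bbq)$, which is solvable (in fact an arithmetic subgroup of a torus), hence polycyclic, hence congruence closed by the preliminary remarks; so $p(\overline{\Ga}) \subseteq \overline{p(\Ga)} = p(\Ga)$, and thus any element of $\overline{\Ga}\cap G(\bbq)$ lies in $\Ga \cdot Der$. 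It therefore suffices to see $\overline{\Ga} \cap Der = \Delta$. But $\overline{\Ga} \cap Der$ is contained in the congruence closure of $\Ga \cap Der = \Delta$ inside $Der(\bbq)$ — here one uses that $Der$ is normal and that the congruence topology on $Der(\bbq)$ is induced from that of $G(\bbq)$ — and $\Delta$ is congruence closed in $Der$ by hypothesis, so $\overline{\Ga}\cap Der = \Delta \subseteq \Ga$. Combining, $\overline{\Ga}\cap G(\bbq) \subseteq \Ga$, as required.

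For the main direction, suppose $\Ga$ is congruence closed with Zariski closure $G$. By Proposition \ref{arithmeticchevalley}(i) applied to $\vp = p: G \to G/Der$, the image $p(\Ga) = p(\Ga^0)$ is normal in its congruence closure with finite abelian quotient; since $G/Der$ is a torus, $p(\Ga)$ is anyway a congruence subgroup of the relevant arithmetic group in $G/Der$. Now I invoke Proposition \ref{congruenceimage}: since $\Ga$ is congruence closed with Zariski closure $G$, there is a congruence subgroup $\Ga^*$ of $G$ with $[\Ga^*, \Ga^*] \le \Ga^0 \le \Ga$. Then $[\Ga^*,\Ga^*]$ is a Zariski-dense subgroup of $Der$ (as $[G,G]=Der$ and $\Ga^*$ is Zariski dense in $G$), and it is contained in $\Ga \cap Der$; I must upgrade this to show $\Ga \cap Der$ is actually a \emph{congruence} subgroup of $Der$, not merely a Zariski-dense one. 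For this, let $\Ga^*$ have closure $H^*$ in $G(\bba_f)$; its intersection with the closure of $Der(\bbq)$ is a compact open subgroup of $\overline{Der}$, i.e. corresponds to a congruence subgroup $\Phi$ of $Der$, and $\Phi \supseteq$ (closure-related image of) $[\Ga^*,\Ga^*]$. The key point is that $\Ga \cap Der$ is sandwiched: $[\Ga^*,\Ga^*] \le \Ga\cap Der$, and since $\Ga$ is congruence closed, $\Ga \cap Der = \overline{\Ga} \cap Der \cap G(\bbq)$ is closed in the congruence topology of $Der(\bbq)$; a congruence-closed subgroup of $Der(\bbq)$ containing the congruence subgroup $\Phi' := $ (a congruence subgroup contained in $[\Ga^*,\Ga^*]$, which exists because $[\Ga^*,\Ga^*]$ contains the commutator of a congruence subgroup and $Der$ is semisimple so this commutator is still finite-index-controlled) is itself a congruence subgroup — because a closed-in-the-congruence-topology subgroup of $Der(\bbq)$ containing a congruence subgroup is a union of cosets of that congruence subgroup and hence open, hence a congruence subgroup. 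Thus $\Ga \cap Der$ is a congruence subgroup of $Der$.

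The step I expect to be the genuine obstacle is producing, inside $[\Ga^*, \Ga^*]$, an honest congruence subgroup of $Der$ — equivalently, showing that the commutator of a congruence subgroup of the simply-connected-up-to-center group $Der$ contains a congruence subgroup. This is where the hypothesis implicitly entering through Proposition \ref{congruenceimage} (and the Nori--Weisfeiler strong-approximation input behind it) does real work: for a semisimple group, $[\Phi,\Phi]$ for $\Phi$ a congruence subgroup has finite index and one must know it contains a principal congruence subgroup, which fails in general for $\bbq$-rank-zero situations unless one is careful — but here $Der$ is the derived group of the Zariski closure of a \emph{congruence-closed} $\Ga$, and the machinery of §1–§2 guarantees the needed non-compactness/strong-approximation properties (as in the proof of Proposition \ref{arithmeticchevalley}(i), every $\bbq$-simple factor of $Der$ relevant here is $\bbr$-isotropic). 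Modulo that input, the sandwich argument — congruence subgroup $\le \Ga\cap Der \le$ congruence-closed set, forcing $\Ga \cap Der$ open hence congruence — closes the proof.
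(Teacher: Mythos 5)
Your proposal has a genuine circularity problem in the main direction, and the paper's actual proof takes a much more direct route.

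For the direction ``$\Ga$ congruence closed $\Rightarrow$ $\Ga\cap Der$ a congruence subgroup,'' you invoke Proposition~\ref{congruenceimage} (to get a congruence subgroup $\Ga^*$ with $[\Ga^*,\Ga^*]\le\Ga^0$) and also Proposition~\ref{arithmeticchevalley}(i) applied to a congruence \emph{closed} (not merely congruence) $\Ga$. But in the paper's logical order both of these are derived \emph{from} Proposition~\ref{pr2.3}: after the proof of Proposition~\ref{pr2.3} the paper explicitly says ``Now, we can end the proof of Proposition 0.1 for congruence closed subgroups\ldots\ It also proves Proposition~\ref{congruenceimage}.'' Prior to Proposition~\ref{pr2.3}, Proposition~\ref{arithmeticchevalley}(i) is established only for $\Ga$ an actual congruence subgroup. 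So your argument assumes what is being proved. You do sense this --- your closing paragraph concedes that the ``genuine obstacle'' is a Nori--Weisfeiler-type input hiding ``behind'' Proposition~\ref{congruenceimage} --- but you never actually discharge it. The paper, by contrast, goes straight to the source: it quotes the result from \cite{Ve} (Proposition~\ref{noriconsequence} in the text, built on \cite{Nori}, \cite{Weis}) that a Zariski dense, congruence-closed subgroup of a group with no toral factors is already a congruence subgroup, applies it to $\Ga\cap[G^0,G^0]$ (which is Zariski dense in $Der$ since $[\Ga^0,\Ga^0]$ is, and is congruence closed as the intersection of the closed $\Ga$ with $Der(\bbq)$), and is done. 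This is the core input, and any correct proof has to cite it directly rather than routing through downstream corollaries.

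There is also a gap in your ``easy'' direction. You assert that $\overline{\Ga}\cap Der$ is contained in the congruence closure of $\Ga\cap Der$ inside $Der(\bbq)$, justifying this by normality of $Der$ and the fact that the congruence topology on $Der(\bbq)$ is induced. Neither observation yields the claim: if $A$ is dense in $X$ and $Y\subseteq X$, it does not follow that $A\cap Y$ is dense in $Y$, and you have not used the crucial fact that $\Delta=\Ga\cap Der$ is a congruence subgroup (i.e.\ \emph{open} in $Der(\bbq)$, not merely closed). Openness is what allows one to trap the congruence closure; your write-up never exploits it. (The paper's own treatment of this direction --- ``the image of $\Ga$ \ldots\ being solvable, is always congruence closed, so the Proposition follows'' --- is also very terse, but it is at least not presented with an incorrect justification.) In summary: the backward direction needs a real argument using openness of $\Delta$, and the forward direction needs a direct appeal to \cite{Ve}/Nori--Weisfeiler rather than to Propositions~\ref{arithmeticchevalley}(i) and~\ref{congruenceimage}.
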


\begin{proof}  If $G^0$ has no toral factors,  this is proved in \cite{Ve}, in fact,  in this case a congruence closed Zariski dense subgroup is a congruence subgroup.  (Note that this is stated there for general $G$, but the assumption that there is no toral factor was mistakenly omitted as the proof there shows.)

Now, if there is a toral factor, we can assume $G$ is connected, so $G^{ab} = V \times S$ where $V$ is unipotent and $S$ a torus.  Now $\Ga\cap [G, G]$ is Zariski dense and congruence closed, so it is a congruence subgroup by \cite{Ve} as before.  For the other direction, note that the image of $\Ga$ is $U \times S$, being solvable, is always congruence closed, so the Proposition follows.
\end{proof}

Now, we can end the proof of Proposition 0.1 for congruence closed subgroups by looking at $\varphi $ on $G_3 = \overline{\Ga}$ the Zariski closure of $\Ga$ and apply the proof above to $Der (G^0_3)$. It also proves Proposition \ref{congruenceimage}.

Of course, Proposition \ref{pr2.3}  is the general form of the following result from \cite{Ve} (based on \cite{Nori} and \cite{Weis}), which is, in fact, the core of Proposition \ref{pr2.3}. \begin{prop}  \label{noriconsequence} Suppose $\Gamma  \subset G(\Z)$
is Zariski  dense, $G$ simply  connected  and $\Gamma  $  a subgroup of $G(\Z)$ which is closed in the congruence topology. Then $\Gamma $ is itself a congruence
subgroup.
\end{prop}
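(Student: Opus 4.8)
The plan is to pass to adelic language and reduce the whole statement to the strong approximation theorem for Zariski-dense subgroups. Here $G$ is connected, semisimple and simply connected over $\Q$. Write $\widehat{\Z}=\prod_p\Z_p$, so that the congruence completion of $G(\Z)$ is its closure in $G(\widehat{\Z})\subset G(\bba_f)$, with the principal congruence subgroups $G(m\Z)=\ker\bigl(G(\Z)\to G(\Z/m\Z)\bigr)$ forming a neighbourhood basis of $1$. Let $\overline{\Gamma}$ be the closure of $\Gamma$ in $G(\widehat{\Z})$; the hypothesis that $\Gamma$ is closed in the congruence topology says exactly that $\Gamma=\overline{\Gamma}\cap G(\Z)$. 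I claim it suffices to show that $\overline{\Gamma}$ is \emph{open} in $G(\widehat{\Z})$. Indeed $G(\widehat{\Z})$ is compact, so an open subgroup has finite index, whence $\Gamma=\overline{\Gamma}\cap G(\Z)$ has finite index in $G(\Z)$; and $\overline{\Gamma}$ open means $\overline{\Gamma}\supseteq\ker\bigl(G(\widehat{\Z})\to G(\Z/m\Z)\bigr)$ for some $m\ge 2$, so $\Gamma\supseteq G(m\Z)$. Thus $\Gamma$ is commensurable with $G(\Z)$ and contains a principal congruence subgroup, i.e.\ it is a congruence subgroup. So the entire proposition reduces to the single assertion that $\overline{\Gamma}$ is open in $G(\widehat{\Z})$.

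Next I would make two elementary reductions. First, every $\Q$-simple factor $L$ of $G$ has $L(\R)$ noncompact: otherwise $L(\Z)$ would be a discrete subgroup of a compact group, hence finite, so the projection of $\Gamma$ to $L$ would be finite, contradicting the Zariski density of $\Gamma$ in $G$ (and hence in $L$). Secondly, $\Gamma$ contains a finitely generated Zariski-dense subgroup $\Gamma_0$: among the finitely generated subgroups of $\Gamma$ choose $\Gamma_0$ whose Zariski closure has maximal dimension; being a closed subgroup of the connected group $G$ of full dimension, this closure is all of $G$. Since $\overline{\Gamma}\supseteq\overline{\Gamma_0}$, it is enough to prove that $\overline{\Gamma_0}$ is open in $G(\widehat{\Z})$.

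Now comes the one substantial input: the strong approximation theorem for Zariski-dense subgroups of simply connected semisimple groups, due to Nori \cite{Nori} and Weisfeiler \cite{Weis} (and recorded in \cite{Ve}). It asserts that if $G$ is connected, semisimple, simply connected over $\Q$ with every $\Q$-simple factor noncompact at the real place, and $\Gamma_0\le G(\Z)$ is finitely generated and Zariski dense, then the closure of $\Gamma_0$ in $G(\widehat{\Z})$ is open. In outline: Nori's structure theory for subgroups of $\GL_N(\F_p)$ shows that the reduction $\Gamma_0\to G(\F_p)$ is surjective for all but finitely many $p$ (using that simple connectedness of $G$ forces $G(\F_p)$ to be generated by its unipotent elements); a Hensel-type lifting argument then shows that the closure of $\Gamma_0$ surjects onto $\prod_{p\notin S}G(\Z_p)$ for a finite set of primes $S$; and Weisfeiler's treatment of the finitely many remaining places (and of the archimedean place) upgrades this to openness of the closure in $G(\widehat{\Z})$. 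Applying this to the $\Gamma_0$ produced by the reductions above gives that $\overline{\Gamma_0}$, and hence $\overline{\Gamma}$, is open, and by the first paragraph $\Gamma$ is a congruence subgroup.

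The hard part is entirely concentrated in this strong approximation theorem, which I would quote rather than reprove; the remainder is the dictionary between the congruence topology on $G(\Z)$ and the adelic picture, together with the two routine reductions. The only mild subtlety worth flagging is that $\Gamma$ is assumed congruence closed but not finitely generated — which is why one passes to $\Gamma_0$ — but this is harmless, since enlarging a subgroup only enlarges its closure.
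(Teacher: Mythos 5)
Your proposal is essentially correct, and it takes the same route the paper itself takes: the paper does not reprove Proposition~\ref{noriconsequence} but cites it directly from \cite{Ve}, describing it as a consequence of \cite{Nori} and \cite{Weis}. Your argument is precisely the content of that citation, fleshed out: the adelic translation (congruence closed $\Leftrightarrow$ $\Gamma=\overline{\Gamma}\cap G(\Z)$; congruence subgroup $\Leftrightarrow$ $\overline{\Gamma}$ open in $G(\widehat{\Z})$), the observation that Zariski density forces every $\Q$-simple factor to be $\R$-isotropic (the paper makes the same observation in its proof of Proposition~\ref{arithmeticchevalley}), and then the Nori--Weisfeiler strong approximation theorem for finitely generated Zariski dense subgroups.

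One small step is stated a bit too glibly: you choose a finitely generated $\Gamma_0\le\Gamma$ ``whose Zariski closure has maximal dimension'' and assert that this closure ``is of full dimension.'' Maximality of dimension does not by itself tell you the dimension reaches $\dim G$; you need an extra argument. The clean version is the Noetherian one: the family of Zariski closures $\overline{\Gamma_0}^Z$ over all finitely generated $\Gamma_0\le\Gamma$ is a directed family of closed subgroups of $G$, so by Noetherianity it has a maximal element $H=\overline{\Gamma_0}^Z$; for any $\gamma\in\Gamma$, the closure of $\langle\Gamma_0,\gamma\rangle$ contains $H$ and is again in the family, so equals $H$, whence $\gamma\in H$; thus $\Gamma\subseteq H$ and $H=G$. (Equivalently, since $\Gamma\subset\GL_n(\Z)$ is countable, take an exhaustion $\Gamma_1\subset\Gamma_2\subset\cdots$ and note the chain of Zariski closures stabilizes.) After that, your dimension argument — a closed subgroup of full dimension in the connected group $G$ equals $G$ — goes through. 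This is a standard fact and does not affect the substance of the proof, but it deserves a sentence rather than an assertion.

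Everything else checks out: the identification of the congruence completion with the closure in $G(\widehat{\Z})$, the equivalence between openness of $\overline{\Gamma}$ and $\Gamma$ being a congruence subgroup, and the appeal to strong approximation. Since the paper offloads all of this to \cite{Ve}, your write-up is a correct reconstruction of the argument behind the citation rather than a genuinely different proof.
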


\section{The Grothendieck closure}%3

\subsection{The Grothendieck Closure of a group $\G$}

\begin{defn} Let $\rho : \G \ra  \GL(V)$ be a representation of $\G$ on
a lattice  $V$ in a  $\Q$-vector space $V\otimes  \Q$.  Then we  get a
continuous      homomorphism      $\widehat{\rho}:     \widehat{\G}\ra
\GL(\widehat{V})$  (where,  for  a  group  $\Delta$,  $\widehat{\Delta}$
denotes its profinite completion) which extends $\rho$ .  \\

Denote by $Cl_{\rho }(\G)$ the subgroup of the profinite completion of
$\G$,  which   preserves  the  lattice   $V$:  $Cl_{\rho}(\G)=  \{g\in
\widehat{\G}: \widehat{\rho}(g)(V)\subset V\}$. In fact,  since $\det (\hat\rho(g)) = \pm 1$ for every $g \in \Ga$ and hence also for every $g \in \hat \Ga$, for $g \in Cl_g (\Ga), \,  \hat\rho (g) (V) = V$, and hence $Cl_\rho (\Gamma)$ is a subgroup of $\hat\Ga$.   We denote by $Cl(\G)$
the subgroup
\begin{equation}\label{eq3.1} Cl(\G)= \{g\in  \widehat{\G}: \widehat{\rho} (g)  (V)\subset V \quad
\forall  \quad lattices \quad  V \}.\end{equation}  Therefore, $Cl(\G)=\cap_{\rho}
Cl_{\rho}(\G)$ where $\rho$  runs through all integral representations
of the group $\G$.

Suppose  now that  $V$ is  any finitely  generated abelian  group (not
necessarily  a lattice  i.e.  not  necessarily torsion-free)  which is
also a  $\G$-module.  Then the torsion  in $V$ is  a (finite) subgroup
with finite exponent  $n$ say. Then $nV$ is  torsion free.  Since $\G$
acts on the  finite group $V/nV$ by a finite group via, say,  $\rho$, it
follows that $\widehat{\G}$  also acts on the finite  group $V/nV$ via
$\widehat{\rho}$.    Thus,    for   $g\in   \widehat{\G}$    we   have
$\widehat{\rho}(g)(V/nV)=V/nV$. Suppose  now that $g\in  Cl(\G)$. Then
$g(nV)=nV$  by  the  definition  of  $Cl(\G)$.   Hence
$g(V)/nV=V/nV$ for $g\in Cl(\G)$. This  is an equality in the quotient
group  $\widehat{V}/nV$.  This  shows that  $g(V)\subset  V+nV=V$ which
shows  that $Cl(\G)$  preserves {\it  all} finitely  generated abelian
groups $V$ which are $\G$ -modules.

By  $Cl_{\Z}(\G)$  we  mean  the  {\it Grothendieck  closure}  of  the
(finitely  generated)  group $\G$.   It  is  essentially  a result  of
\cite{Lub} that the Grothendieck  closure $Cl_{\Z}(\G)$ is the same as
the group $Cl(\G)$ defined  above (in \cite{Lub}, the group considered
was  the closure  with respect  to {\it  all} finitely  generated $\Z$
modules which  are also $\G$  modules, whereas we consider  only those
finitely generated $\Z$  modules which are $\G$ modules  and which are
torsion-free; the argument of the preceding paragraph shows that these
closures  are the  same). From  now on,  we identify  the Grothendieck
closure $Cl_{\Z}(\G)$ with the foregoing group $Cl(\G)$.
\end{defn}

\begin{notation}\label{BDG}   Let  $\G$  be  a  group, $V$  a  finitely  generated
torsion-free abelian  group which is  a $\G$-module and $\rho:  \G \ra
\GL(V)$ the corresponding $\G$-action. Denote by $G_{\rho}$ the Zariski
closure  of  the  image  $\rho(\Gamma  )$ in  $\GL(V\otimes  \Q)$,  and
$G_{\rho}^0$   its  connected  component   of  identity.    Then  both
$G_{\rho},G_{\rho}^0$ are  linear algebraic groups  defined over $\Q$,
and so is $Der_\rho = [G^0_\rho, G^0_\rho]$.

Let         $B = B_{\rho}(\G)$         denote         the         subgroup
$\widehat{\rho}(\widehat{\G})\cap  \GL(V)$.
Since the profinite topology of $\GL(\hat V) $ induces the congruence topology on $\GL(V), B_\rho(\Gamma)$ is the congruence closure of $\rho(\Gamma)$ in $\GL(V)$.

We denote by $D=D_{\rho}(\G)$ the intersection of $B$ with the derived
subgroup $Der_\rho = [G^0,G^0]$. We thus have an exact sequence
\[1 \ra D \ra B \ra A \ra  1, \] where $A= A_\rho (\Ga)$ is an extension of a finite
group $G/G^0$  by an abelian  group (the image  of $B\cap G^0$  in the
abelianization $(G^0)^{ab}$ of the connected component $G^0$).
\end{notation}

 \subsection{Simply Connected Representations}

\begin{defn}  \label{simplyconnectedrepresentations} We will  say that
$\rho$ is {\bf simply connected} if  the group $G = G_{\rho}$ is {\it essentially
simply connected}.  That is, if $U$  is the unipotent  radical of $G$,
the quotient $G^0/U$ is a product $H\times S$ where $H$ is semi-simple
and simply connected and $S$ is a torus.
\end{defn}

An easy     consequence     of     Lemma \ref{essentiallysimplyconnected}
is that      simply      connected
representations are closed under direct sums.

\begin{lemma} \label{SCdirectsum}  Let $\rho _1,\rho _2$  be two simply
connected representations  of an abstract group $\G$.  Then the direct
sum $\rho _1\oplus \rho _2$ is also simply connected.
\end{lemma}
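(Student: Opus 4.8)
The plan is to reduce the statement directly to Lemma \ref{essentiallysimplyconnected}. Let $\rho_1:\G\to\GL(V_1)$ and $\rho_2:\G\to\GL(V_2)$ be the two simply connected representations, so that $V_1,V_2$ are finitely generated torsion-free abelian groups and the Zariski closures $G_1=G_{\rho_1}\subset\GL(V_1\otimes\Q)$ and $G_2=G_{\rho_2}\subset\GL(V_2\otimes\Q)$ are essentially simply connected in the sense of Definition \ref{simplyconnectedrepresentations}. The direct sum $\rho:=\rho_1\oplus\rho_2$ acts on $V:=V_1\oplus V_2$, which is again finitely generated and torsion-free, and we must show that the Zariski closure $G_\rho$ of $\rho(\G)$ in $\GL(V\otimes\Q)$ is essentially simply connected.

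The key observation is that $\rho(\G)$ lands in $G_1\times G_2\subset \GL(V_1\otimes\Q)\times\GL(V_2\otimes\Q)$, so $G_\rho\subseteq G_1\times G_2$. First I would note that the two coordinate projections $\pi_i:G_\rho\to G_i$ are surjective: indeed $\pi_i(G_\rho)$ is a Zariski-closed subgroup of $G_i$ (image of an algebraic group under a morphism of algebraic groups is closed) containing $\pi_i(\rho(\G))=\rho_i(\G)$, which is Zariski dense in $G_i$ by definition of $G_i$ as the Zariski closure of $\rho_i(\G)$; hence $\pi_i(G_\rho)=G_i$. Now $G_\rho$ is a subgroup of the product $G_1\times G_2$ of two essentially simply connected linear algebraic groups over $\C$ (extending scalars from $\Q$ to $\C$ changes nothing, since essential simple connectedness of a $\Q$-group is a statement about its $\C$-points), and both projections are surjective. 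Lemma \ref{essentiallysimplyconnected} then applies verbatim and gives that $G_\rho$ is essentially simply connected, i.e.\ $\rho_1\oplus\rho_2$ is a simply connected representation.

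There is essentially no obstacle here: the content of the lemma is entirely in Lemma \ref{essentiallysimplyconnected}, and the only things to check are the two elementary facts that (a) $V_1\oplus V_2$ is torsion-free when $V_1,V_2$ are, and (b) the coordinate projections from the Zariski closure of $\rho(\G)$ onto each $G_i$ are surjective. The latter is the only point requiring a word of justification, and it follows from the standard fact that the image of an affine algebraic group under a morphism is Zariski closed together with the density of $\rho_i(\G)$ in $G_i$. One small bookkeeping point worth mentioning explicitly: to invoke Lemma \ref{essentiallysimplyconnected} we view everything over $\C$, which is harmless because the notion of essentially simply connected is defined over $\C$ in the first place and the $\Q$-structure plays no role in this particular statement. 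This is exactly the ``easy consequence'' flagged in the sentence preceding the lemma, and a two-line proof suffices.
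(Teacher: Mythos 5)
Your proof is correct and is exactly the argument the paper has in mind: the paper states the lemma as an immediate consequence of Lemma \ref{essentiallysimplyconnected} and omits the details, which you have filled in properly (inclusion of $G_\rho$ into $G_1\times G_2$, surjectivity of the projections via closedness of the image and Zariski density of $\rho_i(\G)$ in $G_i$, then apply Lemma \ref{essentiallysimplyconnected}).
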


We also have:

\begin{lemma}\label{surjectiveclosure} Let $\rho: \Gamma \ra \GL(W)$ be
a sub-representation of a  representation $\tau: \Gamma \ra \GL(V)$ such
that   both  $\rho,   \tau$  are simply connected.   Then  the   map
$r: B_{\tau}(\Gamma )\ra B_{\rho}(\Gamma )$ is surjective.
\end{lemma}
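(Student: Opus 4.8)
The statement is that if $\rho\colon\Gamma\to\GL(W)$ sits inside $\tau\colon\Gamma\to\GL(V)$ as a subrepresentation, both being simply connected in the sense of Definition \ref{simplyconnectedrepresentations}, then the restriction map $r\colon B_\tau(\Gamma)\to B_\rho(\Gamma)$ is onto. Recall $B_\tau(\Gamma)$ is the congruence closure of $\tau(\Gamma)$ in $\GL(V)$, and likewise for $B_\rho$. I want to produce the maps purely algebraically first, then invoke the arithmetic Chevalley statement.

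The plan is as follows. First I would set up the algebraic picture: let $G_\tau\subset\GL(V\otimes\Q)$ be the Zariski closure of $\tau(\Gamma)$ and $G_\rho\subset\GL(W\otimes\Q)$ the Zariski closure of $\rho(\Gamma)$. Since $\rho$ is a subrepresentation of $\tau$, restriction to the $\Gamma$-stable subspace $W\otimes\Q$ gives a surjective $\Q$-morphism $\psi\colon G_\tau\to G_\rho$ of algebraic groups (it's defined over $\Q$ because $W\otimes\Q$ is a $\Q$-subspace, and surjective because $\rho(\Gamma)=\psi(\tau(\Gamma))$ is Zariski dense in $G_\rho$). Now $\tau(\Gamma)\subset\GL(V)$ is a congruence subgroup of $G_\tau(\Q)$ — more precisely, after intersecting with $\GL(V)$ we get an arithmetic, congruence subgroup — and $G_\rho$ is essentially simply connected by hypothesis (that's exactly what "$\rho$ simply connected" means). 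So Proposition \ref{arithmeticchevalley}(ii) applies: the image $\psi$ of a congruence subgroup of $G_\tau$ is congruence closed in $G_\rho$. Concretely, $\psi(\tau(\Gamma))$ has congruence closure equal to itself, but one should be a little careful since $B_\rho(\Gamma)$ is the congruence closure of $\rho(\Gamma)$ \emph{inside} $\GL(W)$ as a whole, not just inside $G_\rho(\Q)$ — here the point is that the congruence topology on $\GL(W)$ restricts to the congruence topology on $G_\rho(\Q)$, so the two closures agree.

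Next I would pass to profinite completions / congruence closures. The $\Q$-morphism $\psi$ induces a continuous homomorphism $\overline\psi$ on the congruence (adelic) completions, $\overline{G_\tau}\to\overline{G_\rho}$, compatibly with $\widehat\tau$ and $\widehat\rho$. The subgroup $B_\tau(\Gamma)=\widehat\tau(\widehat\Gamma)\cap\GL(V)$ maps into $\widehat\rho(\widehat\Gamma)\cap\GL(W)=B_\rho(\Gamma)$ under $\overline\psi$, which is the map $r$. To see $r$ is surjective: an element of $B_\rho(\Gamma)$ lies in the congruence closure of $\rho(\Gamma)$ in $\GL(W)$; by the above it lies in $\psi(\tau(\Gamma))$-closure which, by Proposition \ref{arithmeticchevalley}(ii), \emph{equals} the image under $\overline\psi$ of the congruence closure $B_\tau(\Gamma)$ of $\tau(\Gamma)$. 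More carefully, since $\overline\psi(\overline{G_\tau})$ is a closed subgroup of $\overline{G_\rho}$ containing $\rho(\Gamma)$ (because $\overline{G_\tau}$, being compact or at least with $\tau(\Gamma)$ dense in it appropriately — one uses that $B_\tau$ is profinite-by-arithmetic), and by essential simple connectedness the image of the congruence completion exhausts the congruence closure of $\rho(\Gamma)$, we get $r(B_\tau(\Gamma))\supseteq B_\rho(\Gamma)$, hence equality.

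The main obstacle I anticipate is the bookkeeping between two different ambient groups in which congruence closures are taken: $B_\rho(\Gamma)$ is defined as the closure inside $\GL(W)$ (equivalently inside $\widehat{\GL(W)}=\prod_p\GL(W\otimes\Z_p)$), whereas Proposition \ref{arithmeticchevalley}(ii) is phrased for congruence closures inside the algebraic group $G_\rho(\Q)$ and its adelic completion. Reconciling these requires noting that the congruence topology on $\GL(W)$ induces on the arithmetic subgroup $G_\rho(\Z)$ exactly its own congruence topology (a standard fact, essentially because a $\Q$-embedding $G_\rho\hookrightarrow\GL(W\otimes\Q)$ identifies the two adelic topologies), so being congruence closed in one sense is equivalent to being congruence closed in the other. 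Once that identification is made explicit, the surjectivity of $r$ is a direct consequence of Proposition \ref{arithmeticchevalley}(ii) applied to $\psi\colon G_\tau\to G_\rho$, together with the observation that $G_\rho$ is essentially simply connected by hypothesis and $\tau(\Gamma)\cap G_\tau(\Z)$ is a congruence subgroup of $G_\tau$.
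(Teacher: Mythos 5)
Your proposed argument has a genuine gap at its core: you assert that $\tau(\Gamma)$ (or its congruence closure $B_\tau(\Gamma)$) ``is a congruence subgroup of $G_\tau(\Q)$'', and then invoke Proposition~\ref{arithmeticchevalley}(ii), whose hypothesis requires a \emph{congruence subgroup} of $G_1$. But $\Gamma$ here is an arbitrary finitely generated group and $\tau(\Gamma)$ can be \emph{thin}, i.e.\ of infinite index in $G_\tau(\Z)$; in that case $B_\tau(\Gamma)$ is only congruence \emph{closed}, not a congruence subgroup, and Proposition~\ref{arithmeticchevalley}(ii) simply does not apply. (Proposition~\ref{arithmeticchevalley}(i) does apply to congruence closed subgroups, but its conclusion is weaker --- normality with finite abelian quotient --- and does not yield surjectivity of $r$.) This is not a bookkeeping issue about ambient groups; it is the central subtlety that the whole paper is built around.

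The paper circumvents this by passing to the derived subgroup. By Proposition~\ref{pr2.3}, the fact that $B_\tau(\Gamma)$ is congruence closed and Zariski dense in $G_\tau$ is equivalent to $D_\tau = B_\tau \cap Der_\tau$ being a genuine \emph{congruence subgroup} of the algebraic group $Der_\tau = [G_\tau^0,G_\tau^0]$. Since $\rho,\tau$ are simply connected, $Der_\tau \ra Der_\rho$ is a surjective map of essentially simply connected groups, and Lemma~\ref{surjectivemorphisms}(iv) (applied to the honest congruence subgroup $D_\tau$, not to the possibly thin $B_\tau$) produces a congruence subgroup $F \subseteq Der_\rho$ inside $r(B_\tau)$. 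Then $F\cdot\rho(\Gamma) \subseteq r(B_\tau)\subseteq B_\rho$ is Zariski dense in $G_\rho$ and meets $Der_\rho$ in a congruence subgroup, so Proposition~\ref{pr2.3} in the reverse direction shows it is congruence closed, hence equals $B_\rho$, giving surjectivity. Your plan of ``project $G_\tau\to G_\rho$, then apply the arithmetic Chevalley theorem'' is the right instinct, but you must first descend to the semi-simple part $Der_\tau$, where Proposition~\ref{pr2.3} converts the congruence-closed hypothesis into the congruence-subgroup hypothesis that Lemma~\ref{surjectivemorphisms}(iv) actually needs. Without that step the argument does not go through.
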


\begin{proof} The  image of $B_{\tau}(\G)$  in $B_{\rho}(\G)$ contains
the image  of $D_{\tau}$.  By Proposition   \ref{pr2.3}, $D_\tau$   is a congruence  subgroup of the
algebraic   group  $Der_{\tau}$.    The   map  $Der_{\tau}\ra
Der_{\rho}$   is a  surjective map between simply connected groups.   Therefore,  by   part  (iv)   of  Lemma
\ref{surjectivemorphisms},  the image  of $D_{\tau}$  is  a congruence
subgroup  $F$ of $D_{\rho}$.   Now, by Proposition \ref{pr2.3}, $D_\rho \cdot \rho(\Ga)$ is  congruence closed, hence equal to $B_\rho$ which is the congruence closure of $\rho(\Ga)$ and $B_\tau \to B_\rho$ is surjective.
\end{proof}

\subsection{Simply-Connected to General}

\begin{lemma}    \label{simplyconnectedsaturate}    Every   (integral)
representation  $\rho: \G  \ra  \GL(W)$ is  a  sub-representation of  a
faithful representation $\tau: \G \ra \GL(V)$ where $\tau$ is simply connected.
\end{lemma}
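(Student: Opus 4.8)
The plan is to take an arbitrary integral representation $\rho : \G \to \GL(W)$, let $G$ be the Zariski closure of $\rho(\G)$ in $\GL(W \otimes \Q)$, and build an auxiliary "overgroup" essentially simply connected cover. First I would apply Lemma~\ref{surjectivemorphisms}(iii) to the connected component $G^0$: there is a connected essentially simply connected $\Q$-group $H^*$ with a surjective $\Q$-morphism $H^* \to G^0$ of finite kernel. The issue is that $\G$ maps to $G$, not to $G^0$, and $G$ need not be connected; so I first pass to $\G^0 = \rho^{-1}(G^0)$, a finite-index subgroup of $\G$, and then I must reassemble the finitely many components. A clean way around this is to induce: start from a $\Q$-representation of $H^*$ through which $\G^0$ (via the finite cover) acts and which restricts to $\rho|_{\G^0}$ plus a faithful summand, then take $\mathrm{Ind}_{\G^0}^{\G}$ of it to get a representation of $\G$. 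Alternatively, and more in the spirit of the paper, one chooses a faithful $\Q$-representation of $G$ itself, splits off the $G^0$-part, replaces it by its pullback to $H^*$, and checks the Zariski closure of the resulting $\G$-action is essentially simply connected.

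The key steps, in order, would be: (1) Set $G = \overline{\rho(\G)}$, $G^0$ its identity component, $\G^0 = \G \cap \rho^{-1}(G^0)$; let $\pi : H^* \to G^0$ be the essentially simply connected finite cover from Lemma~\ref{surjectivemorphisms}(iii). (2) Lift the action of $\G^0$: since $\pi$ has finite kernel $F$, the group $\G^0$ does not lift to $H^*(\Q)$ in general, but a finite-index subgroup does, or one works with the fiber product $\G^0 \times_{G^0} H^*$; here I would instead observe that it suffices to find \emph{some} simply connected $\tau$ containing $\rho$, so I may freely enlarge $\rho$ first — e.g. replace $\rho$ by $\rho \oplus (\text{regular representation of the finite quotient } \G/\G^0)$, which is harmless by Lemma~\ref{SCdirectsum}-type closure under sums and lets me assume $G = G^0$ is connected. (3) With $G$ connected, apply Lemma~\ref{surjectivemorphisms}(i) to the surjection $H^* \to G$ and the representation $(\rho, W)$: this yields a faithful $\Q$-representation $(\tau, V)$ of $H^*$ containing $(\rho, W)$. (4) Pull $\tau$ back to $\G^0$ (or to the relevant finite cover of $\G$) via $\G \to G \leftarrow H^*$; because $H^*$ is essentially simply connected and the Zariski closure of the image of $\G$ in $H^*$ is all of $H^*$ (as $\rho(\G)$ was dense in $G$ and $H^*$ is connected with finite kernel over $G$), the Zariski closure of $\tau(\G)$ is $H^*$, which is essentially simply connected — so $\tau$ is simply connected in the sense of Definition~\ref{simplyconnectedrepresentations}, and it contains $\rho$.

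The main obstacle is the lifting problem in step~(2): a subgroup of $G(\Q)$ need not lift to $H^*(\Q)$ because the finite central extension $1 \to F \to H^* \to G \to 1$ need not split over $\G$. The honest fix is to pass to the preimage $\widetilde{\G} := \{(\g, h) : \rho(\g) = \pi(h)\} \subseteq \G \times H^*$, which surjects onto $\G$ with finite kernel $F$; then $\mathrm{Ind}_{\widetilde{\G}}^{?}$ does not immediately help since $\widetilde\G$ is not a subgroup of $\G$, so instead one argues that it is enough to prove the statement after replacing $\G$ by a suitable finite central extension — but the lemma is about $\G$ itself. The cleanest resolution, which I expect the authors use, is that the \emph{adjoint trick} is unnecessary: one does not need $H^*$ to be a cover of $G$; it suffices that $\tau$ be faithful on $\G$ and that $\overline{\tau(\G)}$ be essentially simply connected, and Lemma~\ref{surjectivemorphisms}(i)+(iii) applied directly to $G^0$ (handling components by the finite-index reduction and Lemma~\ref{essentiallysimplyconnected} to glue the component group back in as an extension that does not affect the radical or the simply-connectedness of $G^0/R$) produces exactly such a $\tau$. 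I would write the finite-component bookkeeping carefully, since that — not the semisimple covering — is where the only real subtlety lies.
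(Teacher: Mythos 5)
Your proposal assembles the right ingredients (Lemma~\ref{surjectivemorphisms}(i) and (iii), the idea of induction from a finite-index subgroup) but has a genuine gap at exactly the step you flag as problematic, and the ``clean resolution'' you offer in the last paragraph does not actually close it.

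The reduction in your step~(2) is wrong: replacing $\rho$ by $\rho \oplus \sigma$, where $\sigma$ is a representation factoring through the finite quotient $\G/\G^0$, cannot make the Zariski closure connected. The Zariski closure of $(\rho\oplus\sigma)(\G)$ projects onto the Zariski closure of $\rho(\G)$, so its component group can only be as large or larger; you cannot reduce to the connected case this way. More importantly, even in the connected case the lifting problem you identify is real and is \emph{not} dissolved by declaring the adjoint trick unnecessary: a map $\G\to G^0(\Q)$ need not lift along the finite isogeny $H^*\to G^0$, and the fiber product $\widetilde\G$ you introduce is not a subgroup of $\G$, so one cannot induce from it.

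The paper's resolution, which your write-up does not reach, is to make the cover split by shrinking $\G$, not by enlarging the target. Since $H(\Z)$ is virtually torsion-free, one chooses a torsion-free arithmetic subgroup $\Delta\subset H^0(\Z)$; the preimage of $\Delta$ in $H^*$ contains a torsion-free finite-index subgroup that meets the finite kernel trivially, hence maps isomorphically onto a finite-index subgroup of $\Delta$. Intersecting with $\G$ and passing to the normal core gives a finite-index normal subgroup $N\trianglelefteq\G$ over which $H^*\to H^0$ \emph{does} split, so the faithful $H^*$-representation $(\theta,E_\Q)$ produced by Lemma~\ref{surjectivemorphisms}(i) becomes a genuine representation of $N$. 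One then sets $\tau=\mathrm{Ind}_N^\G(E_\Q)$; the inclusion $W_\Q\subset\mathrm{Ind}_N^\G(W_\Q)\subset\mathrm{Ind}_N^\G(E_\Q)$ embeds $\rho$ in $\tau$. The remaining point your proposal never addresses is why $\tau$ is simply connected: the restriction $\tau\mid_N$ is a direct sum of the $\G/N$-conjugates $n\mapsto\theta(\g n\g^{-1})$, each of which has Zariski closure $H^*$ (hence is simply connected), so $\tau\mid_N$ is simply connected by Lemma~\ref{SCdirectsum}, and since simple connectedness is insensitive to passing to a finite-index subgroup, $\tau$ itself is simply connected. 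Finally, the lattice $W$ is embedded into the induced lattice after scaling by an integer~$m$. Without the torsion-free splitting step and the $\mathrm{Ind}_N^\G$ construction, the argument does not go through.
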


\begin{proof}  Let $\rho  : \G  \ra \GL(W)$  be a  representation.  Let
$Der$ be  the derived subgroup  of the identity component  of the
Zariski closure $H= G_\rho $  of $\rho (\G)$. Then, by Lemma \ref{surjectivemorphisms}(iii),  there exists  a map $H^* \ra
H^0$   with   finite   kernel      such   that $H^*$ is connected and    $H^*/U^*=
(H^*)_{ss}\times S^*$  where $H^*_{ss}$  is a simply  connected semi-simple
group.  Denote by $W_{\Q}$
the   $\Q$-vector   space   $W\otimes    \Q$.    By     Lemma
\ref{surjectivemorphisms}(i), $\rho :H^0  \ra \GL(W_{\Q})$ may be considered
as  a   sub-representation  of  a   faithful  representation  $(\theta,  E_{\Q}) $ of the covering group $H^*$. \\

By (ii) of Lemma  \ref{surjectivemorphisms}, the image of an arithmetic
subgroup of $H^*$ is an arithmetic  group of $H$.  Moreover, as $H(\bbz)$ is virtually torsion free, one may  choose a normal,    torsion-free arithmetic subgroup  $\D \subset  H(\Z)$ such
that  the map $H^*\ra  H^0 $ splits  over $\D$.   In particular,  the map
$H^*\ra H^0$  splits over  a normal subgroup $N$ of $\G$  of finite index.   Thus, $\theta$
may be considered as a representation of the group $N$. \\

Consider  the  induced  representation $Ind  _N^{\G}(W_{\Q})$.   Since
$W_{\Q}$   is   a   representation    of   $\G$,   it   follows   that
$Ind_N^{\G}(W_{\Q})=W_{\Q}  \otimes Ind_N^{\G}(triv_N)\supset W_{\Q}$.
Since, by  the first  paragraph of this  proof, $W _{\Q}\subset  E_{\Q}$ as
$H^*$ modules,  it follows that $W  _{\Q} \mid _N  \subset E_{\Q}$ and
hence   $W_{\Q}\subset  Ind   _N^{\G}(E_{\Q})=:V_{\Q}$.    Write  $\tau
=Ind_N^{\G}(E_{\Q})$ for the representation  of $\G$ on $V_{\Q}$.  The
normality of  $N$ in $\G$ implies that  the restriction representation
$\tau \mid _N$ is contained in a direct sum of the $N$-representations $n\to\theta(\gamma n\gamma^{-1})$ as $\gamma$ varies over the finite set $\Ga/N$.

Write  $G_{\theta \mid  _N}$  for  the Zariski  closure  of the  image
$\theta  (N)$.  Since  $G_{\theta  \mid _N}$  has  $H^*$ as its Zariski closure  and  the group $H^*_{ss}$  is simply
connected,  each $\theta  $ composed  with  conjugation by  $\g$ is  a
simply  connected  representation  of  $N$.   It  follows  from  Lemma
\ref{SCdirectsum} that $\tau \mid _N$ is simply connected. Since simple
connectedness of a representation is  the same for subgroups of finite
index, it follows that $\tau $, as a representation of $\G$, is simply
connected.

We have now proved that there exists $\G$-equivariant embedding of the
module $(\rho,W_{\Q})$ into $(\tau,  V_{\Q})$ where $W,V$ are lattices
in the $\Q$-vector spaces $W_{\Q},V_{\Q}$.  A basis of the lattice $W$
is  then a  $\Q$-linear  combination of  a  basis of  $V$; the  finite
generation of $W$  then implies that there exists  an integer $m$ such
that   $mW\subset  V$,  and   this  inclusion   is  an   embedding  of
$\G$-modules.   Clearly,  the  module   $(\rho,W)$  is   isomorphic  to
$(\rho,mW)$ the isomorphism given  by multiplication by $m$. Hence the
lemma follows.
\end{proof}

The following  is the main technical  result of this section, from which the  main results of this paper  are derived:

\begin{proposition}  \label{surjective}  The  group  $Cl(\G)$  is  the
inverse limit  of the groups $B_{\rho}(\G)$ where  $\rho$ runs through
simply connected  representations and $B_\rho(\Gamma)$ is the congruence closure of $\rho(\Gamma)$.  Moreover, if $\rho:  \G \ra \GL(W)$
is simply  connected, then the  map $Cl(\G)\ra B_{\rho} (\G)$  is
surjective.
\end{proposition}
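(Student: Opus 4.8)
The plan is to reduce the statement to the combination of Lemma~\ref{simplyconnectedsaturate} and Lemma~\ref{surjectiveclosure}, with the computation $Cl(\G) = \bigcap_\rho Cl_\rho(\G)$ from the definition of $Cl(\G)$. First I would observe that the identification of $Cl(\G)$ with an inverse limit is really bookkeeping: by definition $Cl(\G) = \bigcap_\rho Cl_\rho(\G)$ where $\rho$ runs over all torsion-free integral representations, and for each such $\rho$ the image of $Cl_\rho(\G)$ in $\GL(V_\rho)$ under $\widehat\rho$ is exactly $B_\rho(\G) = \widehat\rho(\widehat\G) \cap \GL(V)$ (using that $\det$ is $\pm 1$, as noted after Definition~\ref{simplyconnectedrepresentations}'s predecessor, so that $Cl_\rho(\G)$ maps onto $B_\rho$). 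Thus $Cl(\G)$ embeds into $\varprojlim_\rho B_\rho(\G)$, the limit taken over the directed system of all representations ordered by "sub-representation up to finite index", with connecting maps the restriction maps $r\colon B_\tau \to B_\rho$. Conversely an element of the inverse limit is a compatible family $(g_\rho)$, and compatibility together with the fact that every pair $\rho_1,\rho_2$ embeds in a common $\tau$ (namely $\rho_1\oplus\rho_2$) shows the family comes from a single element $g\in\widehat\G$; this $g$ lies in every $Cl_\rho(\G)$, hence in $Cl(\G)$. So $Cl(\G) = \varprojlim_\rho B_\rho(\G)$ over \emph{all} $\rho$.

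Next I would cut the indexing system down to simply connected representations. By Lemma~\ref{simplyconnectedsaturate}, every integral representation $\rho$ embeds (after rescaling the lattice, which does not change the associated $B_\rho$) as a sub-representation of a faithful \emph{simply connected} representation $\tau$. Hence the simply connected representations form a cofinal subsystem of the directed system of all representations: given any $\rho$ there is a simply connected $\tau$ dominating it, and given two simply connected $\rho_1,\rho_2$ the sum $\rho_1\oplus\rho_2$ is again simply connected by Lemma~\ref{SCdirectsum} and dominates both. Since an inverse limit is unchanged when restricted to a cofinal subsystem, $Cl(\G) = \varprojlim_\rho B_\rho(\G)$ with $\rho$ ranging only over simply connected representations. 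This proves the first assertion.

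For the surjectivity of $Cl(\G)\to B_\rho(\G)$ when $\rho$ is simply connected: here I would invoke Lemma~\ref{surjectiveclosure}, which says that whenever $\rho$ is a sub-representation of a simply connected $\tau$ with $\tau$ simply connected, the connecting map $r\colon B_\tau(\G)\to B_\rho(\G)$ is surjective. Combined with cofinality, this means the projection of the inverse limit $\varprojlim_\sigma B_\sigma(\G)$ (over simply connected $\sigma$) onto the factor $B_\rho(\G)$ is surjective: given $b\in B_\rho(\G)$, one builds a compatible family mapping to $b$ by a standard inverse-limit argument over a well-ordered cofinal chain, lifting at each stage along the surjective connecting maps (and using that all the connecting maps among simply connected representations are surjective, which follows by applying Lemma~\ref{surjectiveclosure} to $\sigma \hookrightarrow \sigma\oplus\sigma'$ for any pair). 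Since $Cl(\G)$ equals this inverse limit, the map $Cl(\G)\to B_\rho(\G)$ is onto.

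The main obstacle I anticipate is the inverse-limit surjectivity argument in the last step: an inverse limit of surjections need not surject onto a given factor unless one is careful about the index system (Mittag--Leffler type issues). The cleanest route is to note that the directed poset of simply connected representations has the property that, fixing $\rho$, the subsystem of $\sigma$ dominating $\rho$ is again directed (by $\oplus$) and cofinal, all its connecting maps to $B_\rho$ are surjective, and then lift $b\in B_\rho$ by transfinite recursion along a cofinal well-ordered chain, at successor stages using surjectivity of a single connecting map and at limit stages using that the partial limits are themselves surjective onto earlier terms — i.e. one verifies the Mittag--Leffler condition holds for this subsystem. Everything else is formal.
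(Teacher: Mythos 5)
Your plan reuses the two key lemmas of the paper (Lemma~\ref{simplyconnectedsaturate} for cofinality of simply connected representations, Lemma~\ref{surjectiveclosure} for surjectivity of the transition maps) and correctly identifies that the whole argument boils down to an inverse-limit surjectivity statement. The first half of your argument --- that restricting to simply connected representations does not change $Cl(\G)$ --- is essentially the same as the paper's, just phrased in terms of cofinal subsystems rather than the paper's $Cl(\G)_{sc}\subset Cl(\G)$ sandwich.

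The gap is in the final step, and you have in fact put your finger on exactly where it lives without resolving it. You propose to lift $b\in B_\rho(\G)$ ``by transfinite recursion along a cofinal well-ordered chain,'' handling limit stages by ``verifying the Mittag--Leffler condition.'' But this is precisely the thing that has no reason to hold in general: at a limit ordinal $\lambda$ you have a compatible family $(b_\alpha)_{\alpha<\lambda}$ and must produce a single $b_\lambda\in B_{\tau_\lambda}$ whose image equals $b_\alpha$ for \emph{all} $\alpha<\lambda$ simultaneously. Surjectivity of each individual connecting map $B_{\tau_\lambda}\to B_{\tau_\alpha}$ gives you a preimage of each $b_\alpha$, but there is no reason these preimages can be chosen coherently: the groups $B_{\tau_\alpha}$ are discrete (they sit inside $\GL(V_\alpha)$, not inside its profinite completion), so there is no compactness to invoke, and for a general inverse system over an uncountable well-order the limit of surjections can fail to surject onto the factors. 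Saying ``one verifies Mittag--Leffler'' begs the question.

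What the paper actually uses --- and what your write-up omits --- is that $\G$ is \emph{finitely generated}, so that there are only countably many integral (hence countably many simply connected) representations of $\G$. One then enumerates them as $\rho_1,\rho_2,\dots$ and sets $\tau_n=\rho_1\oplus\cdots\oplus\rho_n$. By Lemma~\ref{SCdirectsum} each $\tau_n$ is simply connected, the $\tau_n$ form a cofinal chain of order type $\omega$, every simply connected $\rho$ is dominated by some $\tau_n$, and Lemma~\ref{surjectiveclosure} gives surjectivity of $B_{\tau_{n+1}}(\G)\to B_{\tau_n}(\G)$. For an $\omega$-indexed inverse system of \emph{surjective} maps of sets, the projection from the limit onto every term is surjective by trivial sequential lifting --- no limit ordinals, no Mittag--Leffler hypothesis, no compactness. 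This countability observation is the missing ingredient; once it is added your argument closes up and matches the paper's.
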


\begin{proof}  Denote  temporarily by  $Cl(\G)_{sc}$  the subgroup  of
elements  of $\widehat{\G}$ which  stabilize the  lattice $V$  for all
{\it  simply connected}  representations $(\tau,  V)$. Let  $W$  be an
arbitrary  finitely generated  torsion-free  lattice which  is also  a
$\G$-module; denote by $\rho$ the action of $\G$ on $W$. \\

By   Lemma  \ref{simplyconnectedsaturate},   there  exists   a  simply
connected representation  $(\tau, V)$ which contains  $(\rho, W)$.  If
$g\in Cl(\G)_{sc}$,  then $\widehat{\tau}(g)(V)\subset V$;  since $\G$
is dense in $\widehat{G}$ and  stabilizes $W$, it follows that for all
$x\in       \widehat{\G}$,      $\widehat{\tau}(x)(\widehat{W})\subset
\widehat{W}$;     in    particular,     for     $g\in    Cl(\G)_{sc}$,
$\widehat{\rho}(g)(W)=   \widehat{\tau}(g)(W)\subset   \widehat{W}\cap
V=W$.  Thus, $Cl(\G)_{sc}\subset Cl(\G)$. \\

The group $Cl(\G)$  is, by definition, the set of  all elements $g$ of
the  profinite  completion  $\widehat{\G}$  which stabilize  all  $\G$
stable  torsion free lattices.  It follows  in particular,  that these
elements  $g$ stabilize  all  $\G$-stable lattices  $V$ associated  to
simply  connected  representations  $(\tau,V)$;  hence  $Cl(\G)\subset
Cl(\G)_{sc}$.     The   preceding    paragraph   now    implies   that
$Cl(\G)=Cl(\G)_{sc}$.    This   proves   the   first   part   of   the
proposition (see Equation \ref{eq0.2}).

We  can enumerate  all  the simply  connected integral  representations  $\rho$,
since $\G$ is finitely generated.  Write $\rho_1,\rho _2, \cdots, \rho
_n \cdots, $  for the sequence of simply  connected representations of
$\G$. Write $\tau _n$ for the direct sum $\rho _1\oplus \rho _2 \oplus
\cdots  \oplus \rho  _n$.  Then  $\tau _n\subset  \tau _{n+1}$  and by
Lemma  \ref{SCdirectsum} each  $\tau $  is simply  connected; moreover,
the  simply connected representation  $\rho _n$ is contained in $\tau
_n$. \\

By  Lemma \ref{surjectiveclosure},  it  follows that  $Cl(\G)$ is  the
inverse limit  of the {\it totally ordered  family} $B_{\tau _n}(\G)$;
moreover, $B_{\tau _{n+1}}(\G)$ maps  {\bf onto} $B_{\tau _n}(\G)$. By
taking inverse  limits, it follows  that $Cl(\G)$ maps {\it  onto} the
group $B_{\tau _n}(\G)$ for every  $n$.  It follows, again from Lemma
\ref{surjectiveclosure}, that every $B_{\rho _n}(\G)$ is a homomorphic
image  of $B_{\tau  _n}(\G)$ and  hence of  $Cl(\G)$. This  proves the
second part of the proposition.
\end{proof}

\begin{defn} Let $\G$ be a  finitely generated group. We say that $\G$
is  $FAb$ if  the abelianization  $\D  ^{ab}$ is  finite for  every
finite index subgroup $\D\subset \G$.
\end{defn}

\begin{cor}\label{closureinverse} If $\G$  is $FAb$  then  for every simply connected representation $\rho$, the congruence closure $B_\rho(\G)$ of $\rho(\G)$ is a congruence subgroup and $Cl(\G)$  is  an inverse
limit  over  a  totally  ordered  set  $\tau_n$  of  simply  connected
representations of  $\G$, of congruence  groups $B_n$ in  groups $G_n= G_{{\tau_n}}$
with $G^0_n$ simply connected. Moreover, the maps $B_{n+1}\ra B_n$ are
surjective. Hence the maps $Cl(\Ga)\ra B_n$ are all surjective.
\end{cor}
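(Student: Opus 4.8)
The plan is to combine Proposition \ref{surjective} with a single structural observation: when $\G$ is $FAb$ and $\rho$ is simply connected, the Zariski closure $G=G_\rho$ of $\rho(\G)$ satisfies $G^0=Der_\rho$ (so $G^0$ is simply connected), and $B_\rho(\G)$ is an honest congruence subgroup of $G$. Granting this, the corollary is almost immediate, since Proposition \ref{surjective} already supplies the totally ordered cofinal family $\tau_n=\rho_1\oplus\cdots\oplus\rho_n$ — each simply connected by Lemma \ref{SCdirectsum} — together with $Cl(\G)=\varprojlim_n B_{\tau_n}(\G)$, surjectivity of $B_{\tau_{n+1}}(\G)\to B_{\tau_n}(\G)$, and hence surjectivity of $Cl(\G)\to B_{\tau_n}(\G)$ for every $n$.

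First I would prove the structural observation. Choose a finite-index subgroup $N\le\G$ with $\rho(N)=\rho(\G)\cap G^0$; since $\rho(N)$ has finite index in $\rho(\G)$, a finite union of translates of its Zariski closure covers $G$, so $\rho(N)$ is Zariski dense in $G^0$. The abelianization $(G^0)^{ab}=G^0/[G^0,G^0]$ is a connected commutative algebraic group, and composing $\rho|_N$ with the quotient map $G^0\to(G^0)^{ab}$ gives a homomorphism factoring through $N^{ab}$, which is finite because $\G$ is $FAb$. Thus the image of $\rho(N)$ in $(G^0)^{ab}$ is simultaneously finite and Zariski dense in a connected group, forcing $(G^0)^{ab}=\{1\}$; that is, $G^0=[G^0,G^0]=Der_\rho$. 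Since $\rho$ is simply connected, $G^0/U=H\times S$ with $H$ semisimple simply connected and $S$ a torus; as $(G^0/U)^{ab}=S$ is a quotient of $(G^0)^{ab}=\{1\}$, we get $S=\{1\}$ and $G^0/U=H$. Hence $G^0$ is perfect with simply connected semisimple part, which is exactly the meaning of "$G^0_n$ simply connected" in the statement.

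Next I would deduce that $B_\rho:=B_\rho(\G)$ is a congruence subgroup of $G$. Recall $B_\rho=\widehat\rho(\widehat\G)\cap\GL(V)$ is precisely the congruence closure of $\rho(\G)$ in $\GL(V)$; in particular $B_\rho$ is congruence closed, lies in $G(\Z)$, and has the same Zariski closure $G$ as $\rho(\G)$. By Proposition \ref{pr2.3}, $D_\rho=B_\rho\cap Der_\rho$ is a congruence — hence finite-index arithmetic — subgroup of $Der_\rho$, and $Der_\rho=G^0$ by the previous paragraph. In the exact sequence $1\to D_\rho\to B_\rho\to A_\rho\to1$ of Notation \ref{BDG}, the group $A_\rho$ is an extension of the finite group $G/G^0$ by the image of $B_\rho\cap G^0$ in $(G^0)^{ab}=\{1\}$, so $A_\rho\cong G/G^0$ is finite. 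Therefore $B_\rho$ has finite index in $G(\Z)$ and contains a principal congruence subgroup (any one contained in $D_\rho$ works), so $B_\rho$ is a congruence subgroup of $G$. Applying this to each $\tau_n$ from Proposition \ref{surjective} and combining with the surjectivity statements recalled in the first paragraph yields the remaining assertions.

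The step I expect to be the main obstacle is the structural observation — specifically, the passage from $FAb$ to the vanishing of the abelian part $A_\rho$. This rests on two points that must be argued carefully: that $\rho(N)$ is genuinely Zariski dense in the full identity component $G^0$ (and not merely in some proper subgroup), and that its image under $G^0\to(G^0)^{ab}$, being a quotient of the finite group $N^{ab}$, is both finite and dense, which is compatible only with a trivial target. Everything else — the exact sequence of Notation \ref{BDG}, Proposition \ref{pr2.3}, and Proposition \ref{surjective} — is then routine assembly.
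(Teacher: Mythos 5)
Your proof is correct and follows essentially the same route as the paper: use $FAb$ and Zariski density of $\rho(\G^0)$ in $G^0$ to kill $(G^0)^{ab}$ (so the torus factor $S$ vanishes and $G^0 = Der_\rho$ is simply connected), then invoke the congruence-closed-implies-congruence-subgroup result for the simply connected $G^0$, and finish with Proposition \ref{surjective}. The only cosmetic difference is that you route through Proposition \ref{pr2.3} and the exact sequence of Notation \ref{BDG} where the paper quotes Proposition \ref{noriconsequence} directly; the paper itself notes that \ref{pr2.3} is the general form of \ref{noriconsequence}, so this is the same argument, just spelled out in a bit more detail.
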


\begin{proof}  If   $\rho:  \G  \ra  \GL(V)$  is   a  simply  connected
representation, then  for a  finite index subgroup  $\G ^0$  the image
$\rho  (\G ^0)$  has  connected Zariski  closure,  and by  assumption,
$G^0/U=H\times S$  where $S$  is a torus  and $H$ is  simply connected
semi-simple. Since  the group $\G$  is $F_{Ab}$ it follows  that $S=1$
and  hence $G^0=Der  (G^0)$.  Now Proposition  \ref{noriconsequence} implies  that
$B_{\rho }(\G)$ is a congruence subgroup of $G_{\rho}(V)$.
The    Corollary    is   now    immediate    from   the    Proposition
\ref{surjective}. We take $B_n=B_{\tau _n}$ in the proof of the proposition.
\end{proof}

We can now prove Theorem 0.5. Let us first prove the direction claiming that the congruence subgroup property implies $Cl(\Ga) = \Ga$.  This was proved for arithmetic groups $\Ga$ by Grothendieck, and we follow here the proof in \cite{Lub} which works for general $\Ga$.  Indeed, if $\rho: \Ga \to \GL_n(\bbz)$ is a faithful simply connected representation such that $\rho(\Ga)$ satisfies the congruence subgroup property, then it means that the map $\hat\rho: \hat\Ga \to \GL_n (\hat\bbz)$ is injective.  Now $\rho \left(Cl(\Ga)\right) \subseteq \GL_n (\bbz) \cap \hat\rho (\hat\Ga)$, but the last is exactly the congruence closure of $\rho (\Ga)$.  By our assumption, $\rho(\Ga)$ is congruence closed, so it is equal to $\rho(\Ga)$.  So in summary $\hat\rho (\Ga) \subset \hat\rho \left(Cl (\Ga)\right)\subseteq\rho(\Ga) = \hat\rho(\Ga)$.  As $\hat\rho$ is injective, $\Ga = Cl (\Ga)$.

In the opposite direction: Assuming $Cl(\Ga) = \Ga$.  By the description of $Cl(\Ga) $ in (0.1) or in (3.1), it follows  that for every finite index subgroup $\Ga' $ of $\Ga$, $Cl(\Ga') = \Ga'$ (see \cite[Proposition 4.4]{Lub}).  Now, if $\rho $ is a faithful simply connected representation of $\Ga$, it is also such for $\Ga'$ and by Proposition 3.6, $\rho\left(Cl(\Ga')\right)$ is congruence closed.  In our case it means that for every finite index subgroup $\Ga'$, $\rho(\Ga')$ is congruence closed, i.e. $\rho(\Ga)$ has the congruence subgroup property.

\section{Thin Groups}%4

Let $\Ga$ be a finitely generated $\bbz$-linear group, i.e. $\Ga \subset \GL_n(\bbz)$, for some $n$.  Let $G$ be its Zariski closure in $\GL_n(\bbc)$ and $\D = G \cap \GL_n(\bbz)$.  We say that $\Ga$ is a \emph{thin} subgroup of $G$ if $[\D:\Ga] = \infty$, otherwise $\Ga$ is an arithmetic subgroup of $G$.  In general, given $\Ga$, (say, given by a set of generators) it is a difficult question to determine if $\Ga$ is thin or arithmetic.  Our next result gives,  still, a group theoretic characterization for the {\bf abstract} group $\Ga$ to be thin.  But first a warning: an abstract group can sometimes appear as an arithmetic subgroup  and sometimes as a thin  subgroup.  For example, the free group on two generators $F = F_2$ is a finite index subgroup of $\SL_2(\bbz)$, and so, arithmetic. But at the same time, by a well known result of Tits asserting that $\SL_n(\bbz)$ contains a copy of $F$ which is Zariski dense in $\SL_n$ [Ti];  it is also thin.  To be precise, let us define:

\begin{definition}\label{thindefinition}  A finitely generated $\bbz$-linear group $\Ga$ is called a {\bf thin group} if it has a faithful representation $\rho: \Ga \to \GL_n(\bbz)$ for some $n \in \bbz$, such that $\rho(\Ga)$ is of infinite index in $\overline{\rho(\Ga)}^Z \cap \GL_n (\bbz)$ where $\overline{\rho(\Ga)}^Z $ is the Zariski closure of $\Ga$ in $\GL_n$.  Such a $\rho$ will be called a thin  representation of $\Ga$.
\end{definition}

We have assumed that $i: \Ga \subset GL_n(\Z)$. Assume also, as we may (see Lemma
\ref{simplyconnectedsaturate}) that the representation $i$ is simply connected. By Proposition \ref{surjective}, the group  $Cl(\Ga)$ is the subgroup of $\widehat{\Ga}$ which preserves the lattices $V_n$ for a totally ordered set (with respect to the relation of being a sub representation)  of faithful
simply connected integral representations $(\rho _n,V_n)$ of $\Ga$ with the maps $Cl(\Ga) \ra B_n$ being surjective, where $B_n$ is the congruence closure of $\rho_n(\Gamma)$ in $GL(V_n)$.  Hence,    $Cl(\Ga)$ is the inverse limit (as $n$ varies) of the congruence closed subgroups $B_n$ and $\Ga$ is the inverse limit of the images $\rho _n(\Ga)$. Equip $B_n/\rho _n(\Ga)$ with the discrete topology.
Consequently, $Cl(\Ga)/\Ga$ is a closed subspace of the Tychonov product $\prod _n (B_{n}/\rho_n(\Ga))$. This is the topology on $Cl(\Ga)/\Ga$ considered in the following theorem.

\begin{thm}\label{thin compact}  Let $\Ga$ be a finitely generated $\bbz$-linear group, i.e. $\Ga \subset \GL_m (\bbz)$ for some $n$.  Then $\Ga$ is \emph{not} a thin group if and only if $\Ga$ satisfies both of the following two properties:

\begin{enumerate}[(a)]

\item $\Ga$ is an $FAb $ group (i.e. for every finite index subgroup $\Lambda $ of $\Ga$, $\Lambda/[\Lambda, \Lambda]$ is finite), and

\item The group $Cl(\Ga)/\Ga$ is compact

 \end{enumerate}
 \end{thm}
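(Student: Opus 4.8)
Here is how I would prove Theorem~\ref{thin compact}.

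\smallskip

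\noindent\emph{Set-up and two reformulations.} Recall the situation already arranged before the statement: having replaced $i$ by a simply connected representation, Proposition~\ref{surjective} and Corollary~\ref{closureinverse} give $Cl(\Ga)=\varprojlim_n B_n$ over a cofinal, totally ordered family of faithful simply connected integral representations $(\rho_n,V_n)$, where $B_n$ is the congruence closure of $\rho_n(\Ga)$ in $\GL(V_n)$, all maps $B_{n+1}\to B_n$ and $Cl(\Ga)\to B_n$ are surjective, and $\Ga\cong\rho_n(\Ga)\subseteq B_n\subseteq G_n(\bbz)$ with $G_n=G_{\rho_n}$ essentially simply connected; the inclusion $B_n\subseteq G_n(\bbz)$ holds because an element of $B_n$ is an integral matrix which is a congruence limit of elements of $\rho_n(\Ga)\subseteq G_n$, hence satisfies the integral equations defining $G_n$. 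The first point I would establish is that the condition ``$[\,G_\rho(\bbz):\rho(\Ga)\,]=\infty$'' is \emph{inherited under enlargement of the representation}: if $(\rho,W)$ is a $\Ga$-submodule of $(\tau,V)$, restriction to $W$ gives a surjective $\bbq$-morphism $r\colon G_\tau\twoheadrightarrow G_\rho$, and by Lemma~\ref{surjectivemorphisms}(ii) the subgroup $r(G_\tau(\bbz))$ of $G_\rho(\bbz)$ is arithmetic, hence of finite index; so if $\tau(\Ga)$ had finite index in $G_\tau(\bbz)$, then $\rho(\Ga)=r(\tau(\Ga))$ would have finite index in $G_\rho(\bbz)$. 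Combining this with Lemma~\ref{simplyconnectedsaturate} and the cofinality of $\{\rho_n\}$ among faithful simply connected integral representations yields: $\Ga$ is a thin group $\iff$ $[\,G_n(\bbz):\rho_n(\Ga)\,]=\infty$ for some $n$. Next, factor $[\,G_n(\bbz):\rho_n(\Ga)\,]=[\,G_n(\bbz):B_n\,]\cdot[\,B_n:\rho_n(\Ga)\,]$; and, since $Cl(\Ga)/\Ga$ is a closed subset of $\prod_n\bigl(B_n/\rho_n(\Ga)\bigr)$ (discrete factors) whose $n$-th coordinate projection is all of $B_n/\rho_n(\Ga)$ — because $Cl(\Ga)\to B_n$ is onto — and a closed subset of a product of discrete spaces is compact exactly when all its coordinate projections are finite, we get: $Cl(\Ga)/\Ga$ is compact $\iff$ $[\,B_n:\rho_n(\Ga)\,]<\infty$ for every $n$.

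\smallskip

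\noindent\emph{Not thin $\Rightarrow$ (a) and (b).} Equivalently, if $\Ga$ is thin then (a) or (b) fails. Suppose $\Ga$ is thin and $FAb$; I must show $Cl(\Ga)/\Ga$ is not compact. By Corollary~\ref{closureinverse}, $FAb$ forces every $B_n$ to be a congruence subgroup of $G_n$, hence $[\,G_n(\bbz):B_n\,]<\infty$ for all $n$. By the first reformulation choose $n$ with $[\,G_n(\bbz):\rho_n(\Ga)\,]=\infty$; the factorization then forces $[\,B_n:\rho_n(\Ga)\,]=\infty$, so $Cl(\Ga)/\Ga$ is not compact.

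\smallskip

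\noindent\emph{(a) and (b) $\Rightarrow$ not thin.} Equivalently, if (a) or (b) fails then $\Ga$ is thin. If $Cl(\Ga)/\Ga$ is not compact, then by the second reformulation some $[\,B_n:\rho_n(\Ga)\,]=\infty$, hence $[\,G_n(\bbz):\rho_n(\Ga)\,]=\infty$, and since $\rho_n$ is faithful $\Ga$ is thin. So assume $\Ga$ is not $FAb$, and pick a finite index subgroup $\Lambda\le\Ga$ with $\Lambda^{ab}$ infinite, so that $\Lambda$ admits a surjection $f\colon\Lambda\twoheadrightarrow\bbz$. Fix a $2$-dimensional $\bbq$-anisotropic torus $T\subseteq\GL_4$ with $T(\bbz)$ of rank $2$ together with a matrix $M\in T(\bbz)\cap\GL_4(\bbz)$ of infinite order whose powers are Zariski dense in $T$ — concretely $T=R^{1}_{\bbq(\sqrt2)/\bbq}\mathbb{G}_m\times R^{1}_{\bbq(\sqrt3)/\bbq}\mathbb{G}_m$ acting on $\bbz[\sqrt2]\oplus\bbz[\sqrt3]\cong\bbz^4$ and $M$ multiplication by a suitable pair of fundamental norm-one units. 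Then $\psi_0\colon\bbz\to\GL_4(\bbz)$, $n\mapsto M^n$, is faithful and $\psi_0(\bbz)$, being of rank $1$, has infinite index in $T(\bbz)=G_{\psi_0}(\bbz)$. Put $\psi:=\psi_0\circ f$, a representation of $\Lambda$ with $\psi(\Lambda)=\psi_0(\bbz)$ of infinite index in $T(\bbz)$, and set $\tau:=\mathrm{Ind}_\Lambda^{\Ga}(\psi)\oplus i$, a faithful integral representation of $\Ga$. By Mackey's formula $\psi$ occurs as a direct summand of $\tau|_\Lambda$, so by the inheritance property $[\,\overline{\tau(\Lambda)}(\bbz):\tau(\Lambda)\,]=\infty$. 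Since $\tau(\Lambda)$ has finite index in $\tau(\Ga)$, while $\overline{\tau(\Lambda)}$ has finite index in $G_\tau=\overline{\tau(\Ga)}$ and hence $\overline{\tau(\Lambda)}(\bbz)$ has finite index in $G_\tau(\bbz)$, a routine index count gives $[\,G_\tau(\bbz):\tau(\Ga)\,]=\infty$; as $\tau$ is faithful, $\Ga$ is thin. This finishes the ``if'' direction, and with it the theorem.

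\smallskip

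\noindent\emph{Where the difficulty lies.} The formal skeleton above — the two index factorizations, the compactness criterion for closed subsets of products of discrete spaces, and the appeals to Corollary~\ref{closureinverse} and Lemma~\ref{surjectivemorphisms}(ii) — is routine. The real content, and the step I expect to be the main obstacle, is ``$\Ga$ not $FAb$ $\Rightarrow$ $\Ga$ thin''. Its two delicate ingredients are: (i) exhibiting a torus $T$ whose integral points have rank strictly larger than the rank of the image one controls — this is exactly why one must leave $\GL_2$ and combine two real quadratic fields, so that a $2$-dimensional anisotropic torus with rank-$2$ integral points is generated Zariski-densely by a merely infinite cyclic subgroup; and (ii) the descent from the finite index subgroup $\Lambda$ (which alone carries a surjection to $\bbz$) back to $\Ga$, which forces the induced-representation construction together with the inheritance of ``infinite arithmetic index'' under enlargement. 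One should also record the two innocuous points $B_n\subseteq G_n(\bbz)$ and that an abelian — hence congruence closed — subgroup of an arithmetic group equals its own congruence closure, this being what, through Corollary~\ref{closureinverse}, keeps all the indices $[\,G_n(\bbz):B_n\,]$ finite in the $FAb$ case.
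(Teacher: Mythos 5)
Your proof is correct and follows essentially the same route as the paper's: the same appeal to Proposition~\ref{surjective} and Corollary~\ref{closureinverse} to realize $Cl(\Ga)/\Ga$ as a closed subspace of a product of discrete sets $B_n/\rho_n(\Ga)$ with surjective coordinate projections, the same reduction of the non-$FAb$ case to exhibiting a representation of $\bbz$ whose cyclic image has infinite index in the integral Zariski closure, and the same induced-representation descent from a finite-index subgroup $\Lambda$ back to $\Ga$. The only real point of divergence is the concrete witness in the non-$FAb$ step — you use a two-dimensional anisotropic torus $R^1_{\bbq(\sqrt2)/\bbq}\mathbb{G}_m\times R^1_{\bbq(\sqrt3)/\bbq}\mathbb{G}_m$ whose integral points have rank two, Zariski-densely generated by a single element, whereas the paper takes an integral matrix that is neither semisimple nor unipotent so that the Zariski closure is $\mathbb{G}_a\times S$ — both choices succeed for exactly the reason you isolate, and you also usefully make explicit the inheritance of thinness under enlarging a representation, a fact the paper invokes tacitly both when upgrading a thin representation to a simply connected one and when passing from $\tau$ to $\rho\times\tau$.
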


\begin{proof}  Assume first that $\Ga$ is a thin group.  If $\Ga$ is not $FAb$ we are done. So, assume $\Ga$ is $FAb$. We must now prove that $Cl(\Ga)/\Ga$ is not compact. We know that $\Ga$ has a faithful thin representation $\rho :\Ga \ra GL_n(\Z)$ which in addition, is simply connected.  This induces a surjective map (see Proposition \ref{surjective}) $Cl(\Ga)\ra B_\rho (\Ga)$ where $B_\rho (\Ga)$ is the  congruence closure of $\rho (\Ga)$ in $GL_n(\Z)$.
 As $\Gamma$ is $FAb, B_\rho(\Gamma) $ is a congruence subgroup, by Corollary \ref{closureinverse}.
 But as $\rho$ is thin,  $\rho (\Ga)$ has infinite index in $B_\rho (\Ga)$. Thus, $Cl(\Ga)/\Ga$ is mapped {\it onto} the discrete infinite quotient space $B_\rho (\Ga)/\rho (\Ga)$. Hence $Cl(\Ga)/\Ga$ is not compact.

Assume now $\Ga$ is not a thin group.  This implies that for every faithful integral representation $\rho(\Ga)$ is of finite index in its integral Zariski closure.  We claim that $\Ga/[\Ga, \Ga]$ is finite.  Otherwise, as $\Ga$ is finitely generated, $\Ga$ is mapped on $\bbz$.  The group $\bbz$ has a Zariski dense integral representation $\tau$ into $\mathbb{G}_a\times S$ where $S$ is a torus; take any integral matrix $g \in \SL_n(\bbz)$ which is neither semi-simple nor unipotent, whose semisimple part has infinite order.  Then both the unipotent and semisimple part of the Zariski closure $H$ of $\tau(\bbz)$ are non trivial and $H(\bbz)$ cannot contain $\tau(\bbz)$ as a subgroup of finite index since $H(\bbz)$ is commensurable to $\mathbb{G}_a (\bbz) \times S(\bbz)$ and both factors are non trivial and infinite.
The representation $\rho \times \tau$ (where $\rho$ is any faithful integral representation of $\Gamma$) will give a thin representation of $\Gamma$.  This proves that $\Ga/[\Ga, \Ga]$ is finite.  A similar argument (using an induced representation) works for every finite index subgroup, hence $\Ga$ satisfies $FAb$.

We now prove  that $Cl(\Ga)/\Ga$ is compact.  We already know that $\Ga$ is $FAb$, so by Corollary \ref{closureinverse}, $Cl(\Ga) = \underset{\leftarrow}{\lim} B_{\rho_n} (\Ga)$ when $B_n = B_{\rho_n} (\Ga)$ are congruence  groups with surjective homomorphisms $B_{n + 1} \to B_n$.  Note that as $\Ga$ has a faithful integral representation, we can assume that all the representations $\rho_n$ in the sequence are faithful and
\begin{equation}\label{inverse limit}  \Ga = \lim_{\stackrel{\longleftarrow}{n}} \rho_n(\Ga).\end{equation}
This implies that $Cl(\Ga)/\Ga = \lim\limits_{\stackrel{\longleftarrow}{n}} B_n/\rho_n (\Ga)$.  Now, by our assumption, each $\rho_n(\Ga)$ is of finite index in $B_n = B_{\rho_n} (\Ga)$.  So $Cl(\Ga)/\Ga$ is an inverse limit of finite sets and hence compact.
\end{proof}

\section{Grothendieck closure and super-rigidity}%6

Let $\Ga$ be a finitely generated group.  We say that $\Ga$ is \emph{integral super-rigid}  if there exists an algebraic group $G \subseteq \GL_m(\bbc)$ and an embedding $i:\Ga_0 \mapsto G$ of a finite index subgroup $\Ga_0$ of $\Ga$, such that for every integral representation $\rho:\Ga \to \GL_n(\bbz)$, there exists an algebraic representation $\tilde \rho: G\to\GL_n(\bbc)$  such that $\rho$ and $\tilde \rho$ agree on some finite index subgroup of $\Ga_0$. Note: $\Ga$ is integral super-rigid if and only if a finite index subgroup of $\Ga$ is integral super-rigid.

Example of such super-rigid groups are, first of all,  the irreducible (arithmetic) lattices in high rank semisimple Lie groups, but also the (arithmetic) lattices in the rank one simple Lie groups $Sp(n, 1)$ and $\bbf^{-20}_4 $ (see \cite{Mar}, \cite{Cor}, \cite{Gr-Sc}).  But \cite{Ba-Lu} shows that there are such groups which are thin groups.

Now, let $\Ga$ be a subgroup of $\GL_m(\bbz)$, whose Zariski closure is essentially simply connected.  We say that $\Ga$ satisfies the \emph{congruence}  \emph{subgroup}  \emph{property} (CSP) if the natural extension of $i:\Ga \to\GL_m(\bbz)$ to $\hat \Ga$, i.e. $\tilde i: \hat\Ga \to \GL_m(\hat\bbz)$ has finite kernel.

\begin{thm}\label{superrigid} Let $\Ga \subseteq \GL_m (\bbz)$ be a finitely generated subgroup satisfying $(FAb)$.  Then
\begin{enumerate}[{\rm(a)}]

\item $Cl(\Ga)/\Ga$ is compact if and only if $\Ga$ is an arithmetic group which is integral super-rigid.

\item $Cl(\Ga)/\Ga $ is finite if and only if $\Ga$ is an arithmetic group satisfying the congruence subgroup property.
\end{enumerate}
\end{thm}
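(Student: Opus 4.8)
The plan is to reduce both statements to the structural description of $Cl(\Ga)$ already available from Corollary \ref{closureinverse}: since $\Ga$ is $FAb$, we may write $Cl(\Ga)=\varprojlim_n B_n$ where the $B_n=B_{\rho_n}(\Ga)$ are genuine congruence arithmetic groups (with $G^0_{\rho_n}$ simply connected) associated to a cofinal totally ordered family of faithful simply connected representations $(\rho_n,V_n)$, the transition maps $B_{n+1}\to B_n$ are surjective, and $\Ga=\varprojlim_n\rho_n(\Ga)$, so that $Cl(\Ga)/\Ga=\varprojlim_n B_n/\rho_n(\Ga)$. For part (a): if $Cl(\Ga)/\Ga$ is compact, I first argue that $\Ga$ is \emph{not} thin. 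Indeed, by Theorem \ref{thin compact} a $FAb$ group with $Cl(\Ga)/\Ga$ compact is not thin; hence every faithful integral representation of $\Ga$ has image of finite index in its integral Zariski closure, so in particular the given embedding exhibits $\Ga$ as arithmetic. Conversely, if $\Ga$ is arithmetic and integral super-rigid, then the compactness of $Cl(\Ga)/\Ga$ again follows from Theorem \ref{thin compact} once one knows $\Ga$ is not thin — but I must still \emph{upgrade} ``arithmetic'' to the full super-rigidity statement, i.e. show that when $Cl(\Ga)/\Ga$ is compact the group $\Ga$ is not merely arithmetic but integral super-rigid. This is the heart of part (a).

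For the super-rigidity direction I would proceed as follows. Fix a faithful simply connected $i:\Ga\hookrightarrow\GL_m(\bbz)$ with Zariski closure $G$; since $\Ga$ is $FAb$, $G^0$ is semisimple and simply connected. Given an arbitrary integral representation $\rho:\Ga\to\GL_n(\bbz)$, include it into the cofinal family, so that some $B_N$ surjects onto $B_\rho(\Ga)$, the congruence closure of $\rho(\Ga)$; compactness of $Cl(\Ga)/\Ga$ forces $B_N/\rho_N(\Ga)$, hence $B_\rho(\Ga)/\rho(\Ga)$, to be finite, i.e. $\rho(\Ga)$ is arithmetic in $G_\rho$. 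Now the existence of an algebraic representation $\tilde\rho:G\to\GL_n(\bbc)$ extending $\rho$ on a finite index subgroup is exactly the assertion that the abstract isomorphism $\Ga\cong i(\Ga)$, viewed as a Zariski-dense arithmetic subgroup of the simply connected semisimple group $G^0$ (up to finite index), is \emph{super-rigid}. When $G^0$ decomposes with a higher-rank or $Sp(n,1)/F_4^{-20}$ factor this is Margulis/Corlette--Gromov--Schoen; the remaining case, where $G^0$ is a product of rank-one factors of type $A_1$, $B_n$, $D_n$, $F_4^{-20}$... is precisely where non-super-rigid (and indeed non-arithmetic, Fuchsian/Kleinian) lattices live — so one must rule this out. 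The key point is that if $\Ga$ had such a rank-one factor admitting a deformation, one could build from a small deformation a faithful integral representation $\rho$ with $\rho(\Ga)$ \emph{thin}, contradicting compactness via Theorem \ref{thin compact}; alternatively one uses that a lattice in $SO(n,1)$ or $SU(n,1)$ has finite-index subgroups surjecting onto $\bbz$ (failure of $FAb$) except in sporadic arithmetic cases which are themselves super-rigid by Gromov--Schoen / Corlette for $Sp(n,1)$ and $F_4$, while $SO(n,1),SU(n,1)$ lattices are never $FAb$. So $FAb$ already eliminates the genuinely rank-one flexible cases, and what remains is super-rigid.

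For part (b), ``$Cl(\Ga)/\Ga$ finite'' is the strengthening ``$B_n/\rho_n(\Ga)$ stabilizes at a finite group,'' and translating through the faithful representation $i:\Ga\to\GL_m(\bbz)$ this says exactly that $B_i(\Ga)/i(\Ga)$ is finite, i.e. the congruence closure of $\Ga$ is commensurable with $\Ga$, i.e. the congruence kernel — the kernel of $\tilde\imath:\hat\Ga\to\GL_m(\hat\bbz)$ — is finite, which is the $(CSP)$ in the sense defined just before the theorem; combined with part (a) (finite $\Rightarrow$ compact $\Rightarrow$ arithmetic and super-rigid, so in particular arithmetic) this gives the stated equivalence. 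One subtlety to check here: that finiteness of $Cl(\Ga)/\Ga$ computed as $\varprojlim B_n/\rho_n(\Ga)$ over the cofinal family is equivalent to finiteness of $B_i(\Ga)/i(\Ga)$ for the single faithful $i$ — this follows because $i$ can be placed cofinally and the transition maps are surjective, so the inverse limit is finite iff it is eventually finite iff it is finite at any cofinal stage, and then one identifies $B_i(\Ga)/i(\Ga)$ with the image of the congruence kernel, using that $\hat\Ga\to\GL_m(\hat\bbz)$ has $i(\Ga)$-congruence-closure $B_i(\Ga)$ and kernel precisely controlling the deviation.

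The main obstacle I expect is the super-rigidity step in part (a): extracting ``integral super-rigid'' rather than just ``arithmetic'' from the compactness hypothesis. This requires (i) knowing which simply connected semisimple $\Q$-groups $G^0$ can be the Zariski closure of a $FAb$ lattice — essentially ruling out $SO(n,1)$ and $SU(n,1)$ factors by the failure of $FAb$ for their lattices — and (ii) invoking the hard super-rigidity theorems (Margulis in higher rank; Corlette and Gromov--Schoen for $Sp(n,1)$ and $F_4^{-20}$) to conclude that every integral representation $\rho$ of $\Ga$ is algebraic up to finite index, and (iii) the converse packaging: showing that super-rigidity plus arithmeticity forces \emph{every} $\rho(\Ga)$ to be arithmetic in $G_\rho$ — not just to have algebraic Zariski closure — so that each $B_n/\rho_n(\Ga)$ is finite and the inverse limit compact. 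Step (iii) is where one must be careful that an algebraic extension $\tilde\rho:G\to\GL_n$ sends the arithmetic group $\Ga$ to an arithmetic subgroup of $G_\rho$ (Borel--Harish-Chandra / Lemma \ref{surjectivemorphisms}(ii)), and that the finitely many ``extra'' integral representations not of this form cannot accumulate.
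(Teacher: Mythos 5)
Your overall reduction via Corollary~\ref{closureinverse} and Theorem~\ref{thin compact} (compactness $+$ $FAb$ $\Rightarrow$ not thin $\Rightarrow$ arithmetic image in every faithful integral representation) matches the paper, but the two key steps diverge sharply from the paper's proof and both have genuine gaps.

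For the super-rigidity step in part (a), you propose to \emph{invoke} the hard super-rigidity theorems of Margulis / Corlette / Gromov--Schoen and then argue that $FAb$ eliminates the flexible ($SO(n,1)$, $SU(n,1)$) factors. This is not how the paper proceeds, and the route you sketch does not close: it is not a theorem that every $FAb$ arithmetic lattice lies in a Lie group covered by the classical super-rigidity results. (For $SO(n,1)$ one does have virtual positive first Betti number, but for $SU(n,1)$ and for mixed isotypic situations this is not settled, and one would also need a uniform argument for groups with nontrivial unipotent radical.) The paper instead \emph{derives} integral super-rigidity directly from the compactness hypothesis by a self-contained algebraic argument: given the fixed faithful simply connected embedding $\rho$ and an arbitrary integral $\theta$, form $\tau=\rho\oplus\theta$; the Zariski closure $G_\tau$ sits inside $G_\rho\times G_\theta$ with surjective projections; non-thinness forces $\tau(\Ga)$ to be arithmetic in $G_\tau$ and $\rho(\Ga)$ arithmetic in $G_\rho$, and the projection gives an abstract isomorphism of these arithmetic groups; an analysis of unipotent radicals (the kernel of $\pi$ on $U_\tau$ must meet $\Ga\cap U_\tau$ in an infinite set unless it is trivial) and of the semisimple parts (using simple connectedness of $H_\rho$ and finiteness of the ``extra'' factor $H$) shows $\pi\colon G_\tau^0\to G_\rho$ is an isomorphism. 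Its inverse composed with $G_\tau\to G_\theta$ is the required algebraic extension of $\theta$. No lattice super-rigidity theorems are used at all; the argument applies uniformly, including to groups with unipotent radical, and is one of the points of the section.

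For part (b), your reduction ``$Cl(\Ga)/\Ga$ finite $\iff$ $B_i(\Ga)/i(\Ga)$ finite'' is false in the direction you need: $B_i(\Ga)/i(\Ga)$ finite only says $\Ga$ is arithmetic (finite index in its congruence closure), which is strictly weaker than CSP --- $\SL_2(\Z)$ is arithmetic with $B_i/\Ga$ finite but $Cl/\Ga$ uncountable. You cannot read off finiteness of $Cl(\Ga)/\Ga$ from any single $B_n/\rho_n(\Ga)$; you need control over \emph{all} of them simultaneously. The paper handles this by first establishing super-rigidity (part (a)), and then citing the result from \cite{Lub} that for a super-rigid arithmetic subgroup of a simply connected group $G$, the quotient $Cl(\Ga)/\Ga$ is in bijection (up to finite index) with the congruence kernel $C(\Ga)=\ker(\hat\Ga\to G(\hat\Z))$. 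Finiteness of $Cl(\Ga)/\Ga$ is then equivalent to finiteness of $C(\Ga)$, i.e.\ CSP. Super-rigidity is the bridge that makes this identification possible, which is why part (a) must be proved first and why your shortcut does not work.
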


\begin{remarks}\label{super}
\begin{enumerate}[{\rm(a)}]
\item  The finiteness of $Cl(\Ga)/\Ga$ implies,  in particular, its  compactness , so Theorem \ref{superrigid} recovers the well known fact (see \cite{BMS}, \cite{Ra2}) that the congruence subgroup property implies super-rigidity.

    \item As explained in \S 2 (based on \cite{Ser}) the simple connectedness is a necessary condition for the CSP to hold.  But by Lemma \ref{simplyconnectedsaturate}, if $\Ga$ has any embedding into $\GL_n(\bbz)$ for some $n$, it also has a simply connected one.
        \end{enumerate}
        \end{remarks}

We now prove Theorem \ref{superrigid}.

\begin{proof}: Assume first $Cl(\Ga)/\Ga$ is compact in which case, by Theorem \ref{thin compact}, $\Ga$ must be an arithmetic subgroup of some algebraic group $G$.  Without loss of generality (using Lemma \ref{simplyconnectedsaturate}) we can assume that $G$ is connected and simply connected, call this representation $\rho: \Ga \to G$.  Let $\theta$ be any other representation of $\Gamma$.

Let $\tau  =\rho \oplus \theta$  be the direct  sum. The
group  $G_{\tau}$ is  a subgroup  of $G_{\rho}\times  G_{\theta}$ with
surjective  projections.  Since  both   $\tau  $  and  $\rho  $  are
embeddings  of   the  group  $\G$,   and  $\G$  does  not   have  thin
representations, it  follows (from Corollary \ref{closureinverse}) that the  projection $\pi :  G_{\tau} \ra
G_{\rho}$ yields an isomorphism of the arithmetic groups $\tau (\G)\subset
G_{\tau}(\Z)$ and $\rho (\G)\subset G_{\rho}(\Z)$.

Assume, as we may, that $\G$ is torsion-free and $\G$ is an arithmetic
group.   Every  arithmetic  group  in $G_{\tau}(\Z)$  is  virtually  a
product   of  the   form   $U_{\tau}(\Z)\rtimes  H_{\tau}(\Z)$   where
$U_{\tau}$ and  $H_{\tau}$ are the unipotent and  semi-simple parts of
$G_{\tau}$ respectively (note that  $G_{\tau}^0$ cannot have torus as
quotient since $\G$ is $FAb$). Hence $\G\cap U_{\tau}(\Z)$ may also
be   described   as  the  virtually  maximal  normal nilpotent  subgroup   of
$\G$. Similarly for $\G\cap U_{\rho}(\Z)$. This proves that the groups
$U_{\tau}$  and  $U_{\rho}$ have  isomorphic  arithmetic groups  which
proves that $\pi: U_{\tau} \ra U_{\rho}$ is an isomorphism.  Otherwise $Ker(\pi)$, which is a $\bbq$-defined normal subgroup of $U_\tau$, would have an infinite intersection with the arithmetic group $\Ga\cap U_\tau$.

Therefore, the arithmetic groups in $H_{\tau}$ and $H_{\rho}$ are
isomorphic   and  the   isomorphism  is   induced  by   the  projection
$H_{\tau}\ra  H_{\rho}$.   Since  $H_{\rho}$  is simply  connected  by
assumption, and is  a factor of $H_{\tau}$, it  follows that $H_{\tau}$
is a product $H_{\rho}H$ where $H$ is a semi-simple group defined over
$\Q$ with  $H(\Z)$ Zariski dense in  $H$.  But the  isomorphism of the
arithmetic  groups in $H_{\tau}$  and $H_{\rho}$  then shows  that the
group $H(\Z)$  is finite  which means that  $H$ is  finite. Therefore,
$\pi: H_{\tau}^0\ra  H_{\rho}$  is an isomorphism and so  the  map
$G_{\tau}^0\ra  G_{\rho}$  is  also  an  isomorphism  since  it  is  a
surjective morphism  between groups of  the same dimension,  and since
$G_{\rho}$ is simply connected.

This proves that $\Ga$ is a super-rigid group.

In \cite{Lub}, it was proved that if $\Ga$ satisfies super rigidity in some simply connected group $G$, then (up to finite index) $Cl(\Ga)/\Ga$ is in 1-1 correspondence with $C(\Ga) = \Ker (\hat \Ga\to G(\hat\bbz))$.  This finishes the proof of both parts (a) and (b).
\end{proof}

\begin{remark}\label{csp and superrigid}  In the situation of Theorem \ref{superrigid}, $\Ga$ is an arithmetic group, satisfying super-rigidity. The difference between parts (a) and (b), is whether $\Ga$ also satisfies CSP.  As of now, there is  no known arithmetic group (in a simply connected group) which satisfies super-rigidity without satisfying CSP.  The conjecture of Serre about the congruence subgroup problem predicts that arithmetic lattices in rank one Lie groups fail to have CSP.  These include Lie groups like $Sp (n, 1)$ and $\bbf_4^{(-20)}$ for which super-rigidity was shown (after Serre had made his conjecture).  Potentially, the arithmetic subgroups of these groups can have $Cl(\Ga)/\Ga$ compact and not finite.  But (some) experts seem to believe now that these groups do satisfy CSP.  Anyway as of now, we do not know any subgroup $\Ga$ of $\GL_n(\bbz)$ with $Cl(\Ga)/\Ga$ compact and not finite.

\end{remark}
%%%%%%%%%%%%%%%%%%%%%%%%%%%%%%%%%%%%%%%%%%%%%%%

\end{document}